\begin{document}

\newtheorem{theorem}[subsection]{Theorem}
\newtheorem{proposition}[subsection]{Proposition}
\newtheorem{lemma}[subsection]{Lemma}
\newtheorem{corollary}[subsection]{Corollary}
\newtheorem{conjecture}[subsection]{Conjecture}
\newtheorem{prop}[subsection]{Proposition}
\newtheorem{defin}[subsection]{Definition}

\numberwithin{equation}{section}
\newcommand{\mr}{\ensuremath{\mathbb R}}
\newcommand{\mc}{\ensuremath{\mathbb C}}
\newcommand{\dif}{\mathrm{d}}
\newcommand{\intz}{\mathbb{Z}}
\newcommand{\ratq}{\mathbb{Q}}
\newcommand{\natn}{\mathbb{N}}
\newcommand{\comc}{\mathbb{C}}
\newcommand{\rear}{\mathbb{R}}
\newcommand{\prip}{\mathbb{P}}
\newcommand{\uph}{\mathbb{H}}
\newcommand{\fief}{\mathbb{F}}
\newcommand{\majorarc}{\mathfrak{M}}
\newcommand{\minorarc}{\mathfrak{m}}
\newcommand{\sings}{\mathfrak{S}}
\newcommand{\fA}{\ensuremath{\mathfrak A}}
\newcommand{\mn}{\ensuremath{\mathbb N}}
\newcommand{\mq}{\ensuremath{\mathbb Q}}
\newcommand{\half}{\tfrac{1}{2}}
\newcommand{\f}{f\times \chi}
\newcommand{\summ}{\mathop{{\sum}^{\star}}}
\newcommand{\chiq}{\chi \bmod q}
\newcommand{\chidb}{\chi \bmod db}
\newcommand{\chid}{\chi \bmod d}
\newcommand{\sym}{\text{sym}^2}
\newcommand{\hhalf}{\tfrac{1}{2}}
\newcommand{\sumstar}{\sideset{}{^*}\sum}
\newcommand{\sumprime}{\sideset{}{'}\sum}
\newcommand{\sumprimeprime}{\sideset{}{''}\sum}
\newcommand{\sumflat}{\sideset{}{^\flat}\sum}
\newcommand{\shortmod}{\ensuremath{\negthickspace \negthickspace \negthickspace \pmod}}
\newcommand{\V}{V\left(\frac{nm}{q^2}\right)}
\newcommand{\sumi}{\mathop{{\sum}^{\dagger}}}
\newcommand{\mz}{\ensuremath{\mathbb Z}}
\newcommand{\leg}[2]{\left(\frac{#1}{#2}\right)}
\newcommand{\muK}{\mu_{\omega}}
\newcommand{\thalf}{\tfrac12}
\newcommand{\lp}{\left(}
\newcommand{\rp}{\right)}
\newcommand{\Lam}{\Lambda_{[i]}}
\newcommand{\lam}{\lambda}
\newcommand{\af}{\mathfrak{a}}
\newcommand{\sw}{S_{[i]}(X,Y;\Phi,\Psi)}
\newcommand{\lz}{\left(}
\newcommand{\pz}{\right)}
\newcommand{\bfrac}[2]{\lz\frac{#1}{#2}\pz}
\newcommand{\odd}{\mathrm{\ primary}}
\newcommand{\even}{\text{ even}}
\newcommand{\res}{\mathrm{Res}}
\newcommand{\sumn}{\sumstar_{(c,1+i)=1}  w\left( \frac {N(c)}X \right)}
\newcommand{\lab}{\left|}
\newcommand{\rab}{\right|}
\newcommand{\Go}{\Gamma_{o}}
\newcommand{\Ge}{\Gamma_{e}}
\newcommand{\M}{\widehat}
\def\su#1{\sum_{\substack{#1}}}

\theoremstyle{plain}
\newtheorem{conj}{Conjecture}
\newtheorem{remark}[subsection]{Remark}

\newcommand{\pfrac}[2]{\left(\frac{#1}{#2}\right)}
\newcommand{\pmfrac}[2]{\left(\mfrac{#1}{#2}\right)}
\newcommand{\ptfrac}[2]{\left(\tfrac{#1}{#2}\right)}
\newcommand{\pMatrix}[4]{\left(\begin{matrix}#1 & #2 \\ #3 & #4\end{matrix}\right)}
\newcommand{\ppMatrix}[4]{\left(\!\pMatrix{#1}{#2}{#3}{#4}\!\right)}
\renewcommand{\pmatrix}[4]{\left(\begin{smallmatrix}#1 & #2 \\ #3 & #4\end{smallmatrix}\right)}
\def\en{{\mathbf{\,e}}_n}

\newcommand{\ppmod}[1]{\hspace{-0.15cm}\pmod{#1}}
\newcommand{\ccom}[1]{{\color{red}{Chantal: #1}} }
\newcommand{\acom}[1]{{\color{blue}{Alia: #1}} }
\newcommand{\alexcom}[1]{{\color{green}{Alex: #1}} }
\newcommand{\hcom}[1]{{\color{brown}{Hua: #1}} }

\makeatletter
\def\widebreve{\mathpalette\wide@breve}
\def\wide@breve#1#2{\sbox\z@{$#1#2$}%
     \mathop{\vbox{\m@th\ialign{##\crcr
\kern0.08em\brevefill#1{0.8\wd\z@}\crcr\noalign{\nointerlineskip}%
                    $\hss#1#2\hss$\crcr}}}\limits}
\def\brevefill#1#2{$\m@th\sbox\tw@{$#1($}%
  \hss\resizebox{#2}{\wd\tw@}{\rotatebox[origin=c]{90}{\upshape(}}\hss$}
\makeatletter

\title[Bounds for Moments of Twisted Fourier coefficients of Modular Forms]{Bounds for Moments of Twisted Fourier coefficients of Modular Forms}

\author[P. Gao]{Peng Gao}
\address{School of Mathematical Sciences, Beihang University, Beijing 100191, China}
\email{penggao@buaa.edu.cn}

\author[L. Zhao]{Liangyi Zhao}
\address{School of Mathematics and Statistics, University of New South Wales, Sydney NSW 2052, Australia}
\email{l.zhao@unsw.edu.au}

\begin{abstract}
We establish upper bounds for shifted moments of modular $L$-functions to a fixed modulus as well as quadratic twists of modular $L$-functions under the generalized Riemann hypothesis.  Our results are then used to establish bounds for moments of sums involving with Fourier coefficients of a given modular form twisted by Dirichlet characters.
\end{abstract}

\maketitle

\noindent {\bf Mathematics Subject Classification (2010)}: 11L40, 11M06  \newline

\noindent {\bf Keywords}: Dirichlet characters, modular $L$-functions, shifted moments, upper bounds

\section{Introduction}\label{sec 1}

  Let $f$ be a fixed holomorphic Hecke eigenform $f$ of weight $\kappa \equiv 0 \pmod 4$ for the full modular group $SL_2 (\mathbb{Z})$. We write the Fourier expansion of $f$ at infinity as
\[
f(z) = \sum_{n=1}^{\infty} \lambda_f (n) n^{\frac{\kappa -1}{2}} e(nz), \quad \mbox{where} \quad e(z) = \exp (2 \pi i z).
\]
Let $X_q^*$ denote the set of primitive Dirichlet characters modulo $q$ and write $\chi$ for a generic element of $X_q^*$.  Also write $\chi^{(d)}=\leg {d}{\cdot}$ for the Kronecker symbol for any fundamental discriminant $d$.  Recall that $d$ is a fundamental discriminant if $d$ is either square-free and $d \equiv 1 \pmod 4$ or $d=4n$ with $n \equiv 2,3 \pmod 4$ and square-free. In this paper, we are interested in bounds for moments of sums involving with $\lambda_f (n)$ twisted by $\chi(n)$ with fixed modulus or by $\chi^{(d)}(n)$ for $d$ running over odd, positive, square-free integers.  More precisely, we consider the sums
\begin{align*}
 S_{m}(q,Y;f)=: &\sum_{\chi \in X^*_q} \Big | \sum_{n \leq Y}\chi(n)\lambda_f(n)\Big |^{2m}, \quad \mbox{and} \quad
 T_{m}(X,Y;f)=:  \sumstar_{\substack{d \leq X \\ (d,2)=1}}\Big | \sum_{n \leq Y}\chi^{(8d)}(n)\lambda_f(n)\Big |^{2m},
\end{align*}
 where $m$, $X$, and $Y$ are positive real numbers and $\sum^*$ indicates that the sum runs over square-free integers throughout the paper. \newline

   Our work is motivated by a result of B. Szab\'o \cite[Theorem 3]{Szab}, who showed under the generalized Riemann hypothesis (GRH) that for any real $k>2$, any large integer $q$ and $y \in \rear$ with $2 \leq y \leq q^{1/2}$,
\begin{align*}
   \sum_{\chi\in X_q^*}\bigg|\sum_{n\leq y} \chi(n)\bigg|^{2k} \ll_k \varphi(q) y^k(\log y)^{(k-1)^2},
\end{align*}
 where $\varphi(q)$ is Euler's totient function. It was also shown in \cite[Theorem 1]{Szab24} that the above bounds are optimal under GRH for primes $q$. \newline

   In \cite{G&Zhao24-06}, the authors used a similar approach to show that, under GRH, for $m > (\sqrt{5}+1)/2$,
\begin{align*}
  \sumstar_{\substack{d \leq X \\ (d,2)=1}}\Big | \sum_{n \leq Y}\chi^{(8d)}(n)\Big |^{2m} \ll XY^m(\log X)^{2m^2-m+1}.
\end{align*}

The aim of this paper is to obtain similar estimations for $S_{m}(q,Y;f)$ and $T_{m}(X,Y;f)$ using the methods in \cite{Szab} and \cite{G&Zhao24-06}, which rely crucially on sharp upper bounds for shifted moments of the corresponding $L$-functions under GRH via a of K. Soundararajan \cite{Sound2009} and its refinement by A. J. Harper \cite{Harper}. In our case, we shall need to establish bounds on shifted moments of the modular $L$-functions involved.  Denote $\zeta(s)$ for the Riemann zeta function and  $L(s, \operatorname{sym}^2 f)$ for the symmetric square $L$-function of $f$ defined in Section \ref{sec:cusp form}. Our first result is for fixed modulus twists.
\begin{theorem}
\label{t1}
 With the notation as above and assuming the truth of GRH, let $k\geq 1$ be a fixed integer and $a_1,\ldots, a_{k}$, $A$ be fixed positive real numbers. Suppose that $q$ is a large real number and $t=(t_1,\ldots ,t_{k})$ a real $k$-tuple with $|t_j|\leq  q^A$. Then
\begin{align}
\label{Lprodbounds}
\begin{split}
 \sum_{\chi \in X^*_q} & \big| L\big( \tfrac12+it_1, f \otimes \chi \big) \big|^{a_1} \cdots \big| L\big( \tfrac12+it_{k},f \otimes \chi  \big) \big|^{a_{k}} \\
 &  \hspace*{2cm} \ll  \varphi(q)(\log q)^{(a_1^2+\cdots +a_{k}^2)/4} \prod_{1\leq j<l \leq k} \Big|\zeta \Big(1+i(t_j-t_l)+\tfrac 1{\log q} \Big) \cdot L\Big(1+i(t_j-t_l)+\tfrac 1{\log q}, \operatorname{sym}^2 f\Big) \Big|^{a_ja_l/2}.
\end{split}
\end{align}
 Here the implied constant depends on $k$, $A$ and the $a_j$'s, but not on $q$ or the $t_j$'s.
\end{theorem}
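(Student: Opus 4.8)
The plan is to follow the Soundararajan--Harper approach to moment bounds, as refined in \cite{Szab} and \cite{G&Zhao24-06}, adapted to the family of modular $L$-functions $L(s, f\otimes\chi)$ with $\chi$ ranging over $X_q^*$. First I would recall the explicit formula / approximate functional equation consequence of GRH: for each $\chi$ and each point $\tfrac12+it_j$, one has a pointwise upper bound for $\log|L(\tfrac12+it_j, f\otimes\chi)|$ in terms of a short Dirichlet polynomial over prime powers $p^\nu \le x$ (weighted by $\Lambda(p^\nu)/(p^{\nu/2}\log p)$ times the Hecke eigenvalue data $\lambda_f(p^\nu)$ and $\chi(p^\nu)$), plus a controlled error term of size $O(\log q/\log x)$. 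This is the modular-form analogue of Soundararajan's Proposition; the relevant local factor at a prime $p$ is $\lambda_f(p)\chi(p)$ at $\nu=1$ and involves $\lambda_f(p)^2-1 = \lambda_{\mathrm{sym}^2 f}(p)$ at $\nu=2$, which is precisely what produces the $\zeta(1+\cdots)L(1+\cdots,\operatorname{sym}^2 f)$ factors on the right-hand side.

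Next I would split the prime support dyadically (following Harper's device) into ranges $[\,1, z_0\,], (z_0,z_1], \dots$ with $z_j$ growing, and bound the $2m$-th moment by distributing mass according to how large the partial Dirichlet polynomial sums are. The key input is a mean-value / large-sieve estimate for Dirichlet polynomials twisted by the modular coefficients over the character family $X_q^*$: for a Dirichlet polynomial $\sum_{n\le N} a_n \chi(n)$ with $N$ a small power of $q$, one has $\sum_{\chi\in X_q^*}|\sum_{n\le N}a_n\chi(n)|^{2\ell} \ll \varphi(q)\, \ell!\,\big(\sum_n |a_n|^2\big)^\ell$ as long as $N^\ell \le q^{1-\varepsilon}$, which follows from orthogonality of characters together with the multiplicativity of $\lambda_f$ (and Deligne's bound $|\lambda_f(n)|\le d(n)$ to control the arithmetic factors). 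Raising the short Dirichlet polynomial to a high power and integrating term by term then yields, after summing the combinatorial contributions, a bound of the shape $\varphi(q)\exp\big(\sum_j \tfrac{a_j^2}{4}\sum_{p\le x}\tfrac{\lambda_f(p)^2}{p} + \sum_{j<l}\tfrac{a_ja_l}{2}\sum_{p\le x}\tfrac{\lambda_f(p)^2\cos((t_j-t_l)\log p)}{p} + \text{lower order}\big)$. The diagonal sums $\sum_{p\le x}\lambda_f(p)^2/p = \log\log x + O(1)$ give the $(\log q)^{(a_1^2+\cdots+a_k^2)/4}$ main term, while the off-diagonal sums $\sum_{p\le x}\lambda_f(p)^2 p^{-1-i(t_j-t_l)}$ are, by the Rankin--Selberg identity $\sum_p \lambda_f(p)^2 p^{-s} = \log\big(\zeta(s)L(s,\operatorname{sym}^2 f)\big) + O(1)$ for $\Re s>1$, equal to $\log\big|\zeta(1+i(t_j-t_l))L(1+i(t_j-t_l),\operatorname{sym}^2 f)\big|$ up to $O(1)$; truncating at height $x$ with $\log x \asymp \log q$ replaces the argument $1+i(t_j-t_l)$ by $1+i(t_j-t_l)+\tfrac1{\log q}$, producing exactly the right-hand side of \eqref{Lprodbounds}. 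One must handle separately the case where $|t_j-t_l|$ is very small (so the $\zeta$-factor is close to its pole) versus moderately large, and also the case where some $|t_j-t_l|$ is comparable to $q^A$; in the latter regime the $\zeta$ and $\operatorname{sym}^2$ factors are $O(1)$ by standard zero-free-region (or GRH) bounds, so the claimed inequality is easy there, and one can essentially cluster the $t_j$ into groups that are mutually close.

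The main obstacle, as in the original arguments, is making the pointwise bound for $\log|L(\tfrac12+it_j,f\otimes\chi)|$ simultaneously usable for all $j$ and uniform in $q$ and the $t_j$, and then carrying out Harper's recursive splitting so that the combinatorial loss does not exceed the main term --- in particular, ensuring that when one raises Dirichlet polynomials over the longer prime ranges to powers, the constraint $N^\ell\le q^{1-\varepsilon}$ in the large sieve is respected, which forces the ``number of moments'' $\ell$ in each range to shrink as the range lengthens. A secondary technical point is that, unlike the Riemann zeta case, the local factor at $p$ is not $1$ but $\lambda_f(p)$, so the relevant variance is $\sum_p \lambda_f(p)^2/p$ rather than $\sum_p 1/p$; verifying that this is still $\log\log x + O(1)$ (hence gives the stated exponent) requires the Rankin--Selberg asymptotic, and the cross-terms require the analytic continuation and nonvanishing of $L(s,\operatorname{sym}^2 f)$ on $\Re s = 1$ (known unconditionally for this $f$), so no extra hypothesis beyond GRH is needed. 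I would also record the standard reduction allowing $k$, $A$, and the $a_j$ to be treated as fixed, so that all implied constants may depend on them.
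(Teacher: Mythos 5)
Your proposal follows essentially the same route as the paper: a GRH upper bound for $\log|L(\tfrac12+it,f\otimes\chi)|$ by a short Dirichlet polynomial over primes, Harper's splitting of the prime ranges combined with orthogonality of the characters in $X_q^*$, and then the Mertens/Rankin--Selberg evaluation $\sum_{p\le x}\lambda_f(p)^2\cos(\alpha\log p)/p=\log\big|\zeta(1+\tfrac1{\log x}+i\alpha)L(1+\tfrac1{\log x}+i\alpha,\operatorname{sym}^2 f)\big|+O(1)$ for the diagonal and cross terms. The one small imprecision is your early attribution of the $\zeta\cdot L(\operatorname{sym}^2 f)$ factors to the $\nu=2$ (prime-square) terms of the Dirichlet polynomial: in the paper these factors come from squaring the $\nu=1$ terms via $\lambda_f(p)^2=1+\lambda_f(p^2)$, while the genuine prime-square terms are bounded by $O(1)$ for non-quadratic $\chi$ (using GRH for $L(s,\chi^2)$ and $L(s,\operatorname{sym}^2 f\otimes\chi^2)$) and the few quadratic characters are handled separately by a pointwise bound --- though your later paragraph does identify the correct mechanism.
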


  Similarly, our next result bounds the shifted moments of quadratic twists of modular $L$-functions.
\begin{theorem}
\label{t1quad}
 With the notation as above and assuming the truth of GRH, let $k\geq 1$ be a fixed integer and $a_1,\ldots, a_{k}$, $A$ fixed positive real numbers. Suppose that $X$ is a large real number and $t=(t_1,\ldots ,t_{k})$ a real $k$-tuple with $|t_j|\leq  X^A$. Then
\begin{align}
\label{Lprodboundsquad}
\begin{split}
 \sumstar_{\substack{(d,2)=1 \\ d \leq X}} & \big| L\big(\tfrac12+it_1, f \otimes \chi^{(8d)} \big) \big|^{a_1} \cdots \big| L\big(\tfrac12+it_{k},f \otimes \chi^{(8d)}  \big) \big|^{a_{k}} \\
\ll & X(\log X)^{(a_1^2+\cdots +a_{k}^2)/4} \\
& \times \prod_{1\leq j<l \leq k} \Big|\zeta \Big(1+i(t_j-t_l)+\tfrac 1{\log X} \Big) \Big|^{a_ja_l/2}\Big|\zeta \Big(1+i(t_j+t_l)+\tfrac 1{\log X} \Big) \Big|^{a_ja_l/2}\prod_{1\leq j\leq k} \Big|\zeta \Big(1+2it_j+\tfrac 1{\log X} \Big) \Big|^{a^2_j/4-a_j/2} \\
& \times \prod_{1\leq j<l \leq k} \Big|L \Big(1+i(t_j-t_l)+\tfrac 1{\log X}, \operatorname{sym}^2 f \Big) \Big|^{a_ja_l/2}\Big|L \Big(1+i(t_j+t_l)+\tfrac 1{\log X}, \operatorname{sym}^2 f \Big) \Big|^{a_ja_l/2} \\
& \times \prod_{1\leq j\leq k} \Big|L \Big(1+2it_j+\tfrac 1{\log X}, \operatorname{sym}^2 f \Big) \Big|^{a^2_j/4+a_j/2}.
\end{split}
\end{align}
 Here the implied constant depends on $k$, $A$ and the $a_j$'s, but not on $X$ or the $t_j$'s.
\end{theorem}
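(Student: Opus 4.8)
The plan is to run the Soundararajan--Harper machinery exactly as in the proof of Theorem \ref{t1}, with the orthogonality of Dirichlet characters modulo $q$ replaced by the quadratic orthogonality for the family $\{\chi^{(8d)}\}_{d\le X}$ used in \cite{G&Zhao24-06}: when $n$ is a perfect square, $\sumstar_{(d,2)=1,\,d\le X}\chi^{(8d)}(n)$ has a main term of size $\asymp X$ times a multiplicative factor that is $\ll_\varepsilon n^\varepsilon$, and otherwise it is $\ll_\varepsilon (Xn)^{1/2+\varepsilon}$ by the P\'olya--Vinogradov inequality. First I would insert the GRH-based pointwise bound of Soundararajan \cite{Sound2009}, in the sharpened form of Harper \cite{Harper}: for a parameter $x$ which is a small power of $X$ (so $\log x\asymp\log X$),
\[
\log\big|L(\tfrac12+it_j,f\otimes\chi^{(8d)})\big|\ \le\ \Re\!\!\sum_{p\le x}\frac{\lambda_f(p)\chi^{(8d)}(p)p^{-it_j}}{p^{1/2+1/\log x}}\,w_p\ +\ \tfrac12\Re\!\!\sum_{p\le\sqrt x}\frac{\big(\lambda_f(p)^2-2\big)\chi^{(8d)}(p)^2 p^{-2it_j}}{p^{1+2/\log x}}\,w_p'\ +\ O\!\Big(\tfrac{\log X}{\log x}\Big)+O(1),
\]
where $w_p,w_p'$ are the standard smooth cutoffs and we used that the analytic conductor of $f\otimes\chi^{(8d)}$ is $\ll\big(d(1+|t_j|)\big)^2\ll X^{O(1)}$. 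Since $\chi^{(8d)}(p)^2=1$ for $p\nmid 8d$ and $\lambda_f(p)^2-2=\lambda_{\operatorname{sym}^2 f}(p)-1$ by the Hecke relations, the prime-square sum is independent of $d$ up to $O(1)$ and, by Mertens-type estimates, equals $\tfrac12\big(\log|L(1+2it_j+\tfrac1{\log X},\operatorname{sym}^2 f)|-\log|\zeta(1+2it_j+\tfrac1{\log X})|\big)+O(1)$; raising to the power $a_j$ and multiplying over $j$ then peels off exactly the factors $\prod_j|L(1+2it_j+\tfrac1{\log X},\operatorname{sym}^2 f)|^{a_j/2}|\zeta(1+2it_j+\tfrac1{\log X})|^{-a_j/2}$.

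This reduces \eqref{Lprodboundsquad} to bounding $\sumstar_{(d,2)=1,\,d\le X}\exp\big(\sum_j a_j\Re\sum_{p\le x}\lambda_f(p)\chi^{(8d)}(p)p^{-it_j}p^{-1/2-1/\log x}w_p\big)$ by $X$ times suitable Euler-type factors. For this I would use Harper's multi-scale device: partition the primes into ranges with rapidly growing endpoints, classify each $d$ by the last range in which its partial Dirichlet polynomial is abnormally large, and on each class apply H\"older's inequality together with a high-moment estimate for the short polynomials. Each such moment is computed by expanding and averaging over $d$ with the quadratic orthogonality relation above: the non-square terms are negligible provided the accumulated length stays $\le X^{1-\delta}$, which is exactly what the choice of scales guarantees, while the square terms assemble into the Gaussian moment and produce the exponent $\tfrac12\sum_{p\le x}\lambda_f(p)^2\big(\sum_j a_j\cos(t_j\log p)\big)^2 p^{-1-2/\log x}w_p^2$.

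It then remains to expand $\big(\sum_j a_j\cos(t_j\log p)\big)^2=\tfrac12\sum_{j,l}a_ja_l\big(\cos((t_j-t_l)\log p)+\cos((t_j+t_l)\log p)\big)$, substitute $\lambda_f(p)^2=\lambda_{\operatorname{sym}^2 f}(p)+1$, and evaluate the prime sums via $\sum_{p\le x}\cos(\theta\log p)p^{-1-2/\log x}=\log|\zeta(1+i\theta+\tfrac1{\log X})|+O(1)$ together with the Rankin--Selberg analogue $\sum_{p\le x}\lambda_{\operatorname{sym}^2 f}(p)\cos(\theta\log p)p^{-1-2/\log x}=\log|L(1+i\theta+\tfrac1{\log X},\operatorname{sym}^2 f)|+O(1)$. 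The diagonal terms $j=l$ with $\theta=0$ yield $(\log X)^{(a_1^2+\cdots+a_k^2)/4}$, the diagonal terms with $\theta=2t_j$ yield $\prod_j|\zeta(1+2it_j+\tfrac1{\log X})|^{a_j^2/4}|L(1+2it_j+\tfrac1{\log X},\operatorname{sym}^2 f)|^{a_j^2/4}$, and the off-diagonal terms yield the products over $1\le j<l\le k$ in \eqref{Lprodboundsquad}. Multiplying back the prime-square factors from the first step combines the exponents $a_j^2/4$ and $\pm a_j/2$ on the arguments $1+2it_j+\tfrac1{\log X}$, which is precisely the claimed bound.

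The main obstacle is the one inherent to every argument of Harper type: organizing the multi-scale decomposition, controlling the exceptional $d$ for which some partial Dirichlet polynomial is large, and keeping the non-square terms in the quadratic character average negligible under the length restriction $X^{1-\delta}$ forced by P\'olya--Vinogradov. By contrast the modular-form features --- Deligne's bound $|\lambda_f(p)|\le 2$, the Hecke relation, and Rankin--Selberg estimates for $\sum_{p\le x}\lambda_f(p)^2(\log p)/p$ --- enter only through standard inputs and introduce nothing worse than in the classical case of $\zeta$.
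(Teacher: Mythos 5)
Your proposal follows essentially the same route as the paper: the Soundararajan--Harper upper bound for $\log|L(\tfrac12+it_j,f\otimes\chi^{(8d)})|$, Harper's multi-scale decomposition with high moments of short Dirichlet polynomials evaluated by quadratic orthogonality, and a final conversion of the resulting prime sums into the $\zeta$ and $\operatorname{sym}^2 f$ Euler factors via the Mertens-type estimates. Your bookkeeping of exponents is correct: the Gaussian term $\tfrac12\sum_p\lambda_f(p)^2\bigl(\sum_j a_j\cos(t_j\log p)\bigr)^2p^{-1}$ together with $\lambda_f(p)^2=\lambda_f(p^2)+1$ produces the $(\log X)^{\sum a_j^2/4}$ factor, the off-diagonal $t_j\pm t_l$ factors, and the $a_j^2/4$ parts of the $2t_j$ exponents, while the prime-square sum supplies the remaining $\pm a_j/2$, exactly as in the paper.

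There is, however, one concrete step that fails as written: the claim that the prime-square sum is ``independent of $d$ up to $O(1)$.'' Since $\chi^{(8d)}(p)^2$ vanishes for $p\mid d$ rather than equalling $1$, replacing $\chi^{(8d)}(p)^2$ by $1$ costs $O\bigl(\sum_{p\mid d}1/p\bigr)$, and for squarefree $d\le X$ this can be as large as $\log\log\log X$; it is not uniformly bounded. Exponentiating, this would contaminate the final bound by a factor $(\log\log X)^{O(1)}$ for the exceptional $d$ with many small prime factors, so the theorem as stated (which has no such slack) does not follow. The paper resolves this by bounding $\log|A(d)L(\tfrac12+it,f\otimes\chi^{(8d)})|$ with $A(d)=\prod_{p\mid d}(1-1/p)$ (Corollary \ref{lem: logLboundquad}), which absorbs the $\sum_{p\mid d}1/p$ term into the weight, and by proving the orthogonality relation with the modified weight $A(d)^{-k}$ (Lemma \ref{prsum}), whose effect on the main term is only a bounded Euler product. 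You would need this device, or an equivalent argument handling the $d$ with abnormally many small prime factors, to close the proof. A secondary, harmless difference: you invoke P\'olya--Vinogradov for the non-square terms, giving an error $(Xn)^{1/2+\varepsilon}$, whereas the paper's Lemma \ref{prsum} uses Poisson summation to get $X^{1/2+\varepsilon}n^{1/4+\varepsilon}$; either suffices given the lengths arising in Harper's decomposition.
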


  We shall derive bounds for $S_{m}(q,Y;f)$ from Theorem \ref{t1} and $T_{m}(q,Y;f)$ from Theorem \ref{t1quad}.  To those ends, we need to majorize the Riemann zeta functions appearing in \eqref{Lprodbounds} and \eqref{Lprodboundsquad}.  The estimations in \eqref{mertenstype} and \eqref{mertenstypesympower} fulfill this role and we readily deduce next consequence of Theorem \ref{t1}.
\begin{corollary}
\label{cor1}
With the notation as above and assumptions as Theorem~\ref{t1}, we have
\begin{align*}
\begin{split}
  & \sum_{\chi \in X^*_q}  \big| L\big( \tfrac{1}{2} +it_1,f \otimes \chi \big) \big|^{a_1} \cdots \big| L\big(\tfrac{1}{2}+it_{k},f \otimes \chi  \big) \big|^{a_{k}} \ll  \varphi(q)(\log q)^{(a_1^2+\cdots +a_{k}^2)/4} \prod_{1\leq j<l\leq k} \Big (g_1(|t_j-t_l|)\cdot g_2(|t_j-t_l|)\Big )^{a_ja_l/2},
\end{split}
\end{align*}
where $g_1:\mathbb{R}_{\geq 0} \rightarrow \mathbb{R}$ is defined by
\begin{equation}
\label{gDef}
g_1(x) =\begin{cases}
\log q,  & \text{if } x\leq 1/\log q \text{ or } x \geq e^q, \\
1/x, & \text{if }   1/\log q \leq x\leq 10, \\
\log \log x, & \text{if }  10 \leq x \leq e^{q},
\end{cases}
\end{equation}
  and where $g_2:\mathbb{R}_{\geq 0} \rightarrow \mathbb{R}$ is defined by
\begin{equation}
\label{g2Def}
g_2(x) =\begin{cases}
1,  & \text{if } x \leq e^e, \\
\log \log x, & \text{if }   e^e \leq x \leq e^{q}, \\
 \log q, & \text{if }  x \geq e^{q}.
\end{cases}
\end{equation}
 Here the implied constant depends on $k$, $A$ and the $a_j$'s, but not on $q$ or the $t_j$'s.
\end{corollary}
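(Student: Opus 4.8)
\emph{Proof plan for Corollary~\ref{cor1}.} The strategy is simply to feed Theorem~\ref{t1} pointwise upper bounds for the two families of factors occurring on the right of \eqref{Lprodbounds}. Writing $x=|t_j-t_l|$ and $\delta=\tfrac1{\log q}$, I would establish that $\big|\zeta(1+\delta+ix)\big|\ll g_1(x)$ and $\big|L(1+\delta+ix,\operatorname{sym}^2 f)\big|\ll g_2(x)$, with absolute implied constants. Since every exponent $a_ja_l/2$ appearing in \eqref{Lprodbounds} is positive, multiplying the two bounds gives $\big|\zeta(1+\delta+ix)\cdot L(1+\delta+ix,\operatorname{sym}^2 f)\big|^{a_ja_l/2}\ll\big(g_1(x)g_2(x)\big)^{a_ja_l/2}$, and taking the product over $1\le j<l\le k$ and carrying along the common factor $\varphi(q)(\log q)^{(a_1^2+\cdots+a_k^2)/4}$ turns \eqref{Lprodbounds} into precisely the asserted inequality, the implied constant still depending only on $k$, $A$ and the $a_j$'s.

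It then remains to verify the two pointwise bounds branch by branch. First I would observe that the hypothesis $|t_j|\le q^A$ with $q$ large forces $x=|t_j-t_l|\le 2q^A<e^q$, so only the first three branches of \eqref{gDef} and the first two branches of \eqref{g2Def} are ever relevant here; the $x\ge e^q$ branches are there merely to keep $g_1,g_2$ defined on all of $\mathbb R_{\ge0}$. For $x\le\delta$ and for $\delta\le x\le 10$ the bound for $\zeta$ is elementary: since $\zeta(s)-\tfrac1{s-1}$ is entire, hence bounded on the bounded set where $s=1+\delta+ix$ lives, and since $|\delta+ix|\ge\max(\delta,x)$, we get $|\zeta(1+\delta+ix)|\le\max(\delta,x)^{-1}+O(1)$, which is $\ll\log q=g_1(x)$ when $x\le\delta$ and $\ll1/x=g_1(x)$ when $\delta\le x\le10$ (the $O(1)$ absorbed because $1/x\gg1$ there). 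For $10\le x\le e^q$ the bound $|\zeta(1+\delta+ix)|\ll\log\log x=g_1(x)$ is the GRH‑conditional (Littlewood‑type) Mertens estimate recorded in \eqref{mertenstype}. For the symmetric square factor, $|L(1+\delta+ix,\operatorname{sym}^2 f)|\ll1=g_2(x)$ when $x\le e^e$ because $L(s,\operatorname{sym}^2 f)$ is entire (see Section~\ref{sec:cusp form}) and therefore bounded on the relevant compact region uniformly in $\delta\in(0,1]$, while for $e^e\le x\le e^q$ the bound $|L(1+\delta+ix,\operatorname{sym}^2 f)|\ll\log\log x=g_2(x)$ is \eqref{mertenstypesympower}.

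The only substantive ingredients are thus the two Mertens-type estimates \eqref{mertenstype} and \eqref{mertenstypesympower}; once those are granted the present deduction is purely mechanical. The single point that must be checked carefully is uniformity in $t$: the implied constants in each of the three regimes for $\zeta$ and the two for $L(s,\operatorname{sym}^2 f)$ have to be independent of $q$ and of $t_1,\dots,t_k$, which is exactly how those auxiliary estimates are stated, so no genuine obstacle arises in this corollary beyond invoking them.
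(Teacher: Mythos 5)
Your proposal is correct and follows essentially the same route as the paper: the authors also obtain Corollary~\ref{cor1} directly from Theorem~\ref{t1} by majorizing the factors $\zeta(1+i(t_j-t_l)+\tfrac1{\log q})$ and $L(1+i(t_j-t_l)+\tfrac1{\log q},\operatorname{sym}^2 f)$ via the Mertens-type estimates \eqref{mertenstype} and \eqref{mertenstypesympower}, which after exponentiation give exactly $g_1$ and $g_2$. Your branch-by-branch verification (including the elementary treatment of the small-$x$ regimes for $\zeta$ and the boundedness of the entire function $L(s,\operatorname{sym}^2 f)$ near $s=1$) is a sound elaboration of the deduction the paper leaves implicit.
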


   Similarly, we have the following consequence of Theorem \ref{t1quad}.
\begin{corollary}
\label{cor1quad}
With the notation as above and assumptions as Theorem~\ref{t1quad}, we have
\begin{align*}
\begin{split}
  \sumstar_{\substack{(d,2)=1 \\ d \leq X}} & \big| L\big( \tfrac{1}{2} +it_1,f \otimes \chi^{(8d)} \big) \big|^{a_1} \cdots \big| L\big(\tfrac{1}{2}+it_{k},f \otimes \chi^{(8d)}  \big) \big|^{a_{k}} \\
 \ll & X(\log X)^{(a_1^2+\cdots +a_{k}^2)/4} \prod_{1\leq i<j\leq k} g_1(|t_i-t_j|)^{a_ia_j/2}g_1(|t_i+t_j|)^{a_ia_j/2}\prod_{1\leq i\leq k} g_1(|2t_i|)^{a^2_i/4-a_i/2} \\
 & \times \prod_{1\leq i<j\leq k} g_2(|t_i-t_j|)^{a_ia_j/2}g_2(|t_i+t_j|)^{a_ia_j/2}\prod_{1\leq i\leq k} g_2(|2t_i|)^{a^2_i/4+a_i/2}.
\end{split}
\end{align*}
 Here the implied constant depends on $k$, $A$ and the $a_j$'s, but not on $X$ or the $t_j$'s.
\end{corollary}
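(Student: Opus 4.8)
The plan is to derive Corollary \ref{cor1quad} from the conclusion \eqref{Lprodboundsquad} of Theorem \ref{t1quad} by estimating each of its $O(k^2)$ Euler factors $|\zeta(1+i\tau+\tfrac{1}{\log X})|$ and $|L(1+i\tau+\tfrac{1}{\log X},\operatorname{sym}^2 f)|$, as $\tau$ runs over the shifts $t_j-t_l$, $t_j+t_l$ and $2t_j$, in terms of $g_1(|\tau|)$ and $g_2(|\tau|)$. This is precisely the purpose of the Mertens-type estimates \eqref{mertenstype} and \eqref{mertenstypesympower}, and the argument will run exactly parallel to the deduction of Corollary \ref{cor1} from Theorem \ref{t1}.

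First I would note that $|t_j|\le X^A$ forces every shift to satisfy $|\tau|\le 2X^A$, so for $X$ large all the $|\tau|$ lie in the range where $g_1$ and $g_2$ are honestly of size $\log X$, $1/|\tau|$ or $\log\log|\tau|$ (the last branch $x\ge e^{q}$, with $q$ read as $X$, never intervenes). I would then split the product in \eqref{Lprodboundsquad} into three blocks: the pair-products over $j<l$ carrying the shift $t_j-t_l$, those carrying $t_j+t_l$, and the diagonal terms carrying $2t_j$. In the first two blocks every exponent is $a_ja_l/2>0$, so I would substitute the upper bounds $|\zeta(1+i\tau+\tfrac{1}{\log X})|\ll g_1(|\tau|)$ and $|L(1+i\tau+\tfrac{1}{\log X},\operatorname{sym}^2 f)|\ll g_2(|\tau|)$ coming from \eqref{mertenstype} and \eqref{mertenstypesympower}, each raised to the power $a_ja_l/2$. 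On the diagonal, the $\operatorname{sym}^2 f$ exponent $a_j^2/4+a_j/2$ is again positive and is handled the same way; the single delicate factor is $|\zeta(1+2it_j+\tfrac{1}{\log X})|^{a_j^2/4-a_j/2}$, whose exponent is negative when $0<a_j<2$. For that factor I would instead use the matching lower bound $|\zeta(1+2it_j+\tfrac{1}{\log X})|\gg g_1(|2t_j|)$ furnished by the two-sided form of \eqref{mertenstype}, so that raising to a negative power sends the inequality in the right direction. Multiplying the three blocks back together with the unchanged prefactor $X(\log X)^{(a_1^2+\cdots+a_k^2)/4}$ and absorbing the finitely many absolute constants into the implied constant yields exactly the right-hand side of the corollary.

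The point that demands care, and the one I expect to be the main obstacle, is the sign bookkeeping on the diagonal: one has to be sure that \eqref{mertenstype} really is a two-sided estimate over the whole range of $|2t_j|$ that occurs, so that the negative-exponent $\zeta$-factors are controlled from below rather than from above — while noting, by contrast, that $g_2$ is never forced to carry a negative exponent, since $a_j^2/4+a_j/2>0$ and $a_ja_l/2>0$. Apart from this, no new analytic ingredient is needed; the corollary is a purely formal consequence of Theorem \ref{t1quad} and the Mertens-type bounds, with the only difference from the fixed-modulus case being the extra shifts $t_j+t_l$ and $2t_j$, which are treated just like $t_j-t_l$.
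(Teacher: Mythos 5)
Your route is the one the paper intends: feed the case bounds of \eqref{mertenstype} and \eqref{mertenstypesympower} into the right-hand side of \eqref{Lprodboundsquad}, factor by factor, and you correctly observe that the only delicate factors are the diagonal ones $\big|\zeta(1+2it_j+\tfrac{1}{\log X})\big|^{a_j^2/4-a_j/2}$, whose exponent is negative for $0<a_j<2$. The gap is precisely in your resolution of that point. The displayed inequalities in \eqref{mertenstype} are one-sided: the equality relates the prime sum to $\log|\zeta|$, but the three cases then bound that quantity from \emph{above} only. The matching lower bound $\big|\zeta(1+2it_j+\tfrac{1}{\log X})\big|\gg g_1(|2t_j|)$ that you invoke does hold in the first two branches of \eqref{gDef}, where $\zeta(s)\asymp 1/(s-1)$ near the pole, but it is false in the third branch: for $10\le |2t_j|\le e^{X}$ one has $g_1(|2t_j|)=\log\log|2t_j|$, whereas $|\zeta(1+it)|$ is typically of size $O(1)$ and can (even under RH) be as small as $\asymp 1/\log\log t$; the quantity $\log\log t$ is only the extremal \emph{upper} order of $|\zeta(1+it)|$. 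Consequently, for $0<a_j<2$ and $|2t_j|\ge 10$ your argument yields a factor $(\log\log|2t_j|)^{+(a_j/2-a_j^2/4)}$ rather than the claimed $(\log\log|2t_j|)^{-(a_j/2-a_j^2/4)}$, i.e.\ an inequality pointing the wrong way, and no purely factor-by-factor deduction from Theorem~\ref{t1quad} can close this.

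Two mitigating remarks. First, when all $a_j\ge 2$ every exponent in \eqref{Lprodboundsquad} is nonnegative, only the upper bounds $|\zeta|\ll g_1$ and $|L(\cdot,\operatorname{sym}^2 f)|\ll g_2$ are needed, and your proof is then complete and is exactly the paper's (unwritten) argument; this covers every use of Corollary~\ref{cor1quad} made later (in Proposition~\ref{t3prop} the tuples consist of $2$'s and $2m-2k\ge 2$). Second, your treatment of the $g_2$ factors is correct as stated, since the exponents $a_ja_l/2$ and $a_j^2/4+a_j/2$ are always positive. But as a proof of the corollary for arbitrary positive $a_j$, the negative-exponent diagonal case remains unjustified.
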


With Corollaries~\ref{cor1} and~\ref{cor1quad} at our disposal, we shall derive the following bounds for $S_m(q,Y;f)$ and $T_m(q,Y;f)$ in Sections~ \ref{sec: mainthm} and~\ref{sec: mainthmquad}, respectively.

\begin{theorem}
\label{fixedmodmean}
With the notation as above and assuming the truth of GRH, we have, for large $Y \leq q$ and any real number $m> 2$,
\begin{align*}
 S_{m}(q,Y;f) \ll \varphi(q)Y^m(\log q)^{(m-1)^2}.
\end{align*}
\end{theorem}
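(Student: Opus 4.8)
The plan is to follow the method of \cite{Szab,G&Zhao24-06}: relate the inner sum over $n$ to a weighted integral of $L(\tfrac12+it,f\otimes\chi)$ on the critical line, raise to the $2m$-th power, sum over $\chi\in X^*_q$, and invoke Corollary~\ref{cor1}. First I would reduce to $L$-functions. Since $f$ has level $1$ and $\chi$ is primitive, $L(s,f\otimes\chi)=\sum_n\lambda_f(n)\chi(n)n^{-s}$, convergent for $\Re s>1$ by Deligne's bound, continues to an entire function. By Perron's formula, a shift of contour to the line $\Re s=\tfrac12$ --- legitimate under GRH, which also supplies the bound $|L(\tfrac12+it,f\otimes\chi)|\ll(q(1+|t|))^{o(1)}$ needed to control the horizontal segments --- together with a smoothing of the sharp cutoff $n\le Y$, should yield
\[
\Big|\sum_{n\le Y}\chi(n)\lambda_f(n)\Big|\ll Y^{1/2}\int_{-T}^{T}\frac{\big|L(\tfrac12+it,f\otimes\chi)\big|}{1+|t|}\,\dif t+E(\chi)
\]
for a suitable truncation height $T$ (some fixed power of $q$, so that $|t|\le q^A$ as required by Corollary~\ref{cor1}), where $E(\chi)$ collects the smoothing discrepancy and the tail of the $t$-integral. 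As in \cite{Szab,G&Zhao24-06}, the $2m$-th moment of $E(\chi)$ over $\chi$ is acceptable --- at most of the order of the claimed main term. Writing $V(\chi)=\int_{-T}^{T}|L(\tfrac12+it,f\otimes\chi)|(1+|t|)^{-1}\,\dif t$, it then suffices to prove $\sum_{\chi\in X^*_q}V(\chi)^{2m}\ll\varphi(q)(\log q)^{(m-1)^2}$.

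Next, assuming first that $m$ is an integer, I would expand the $2m$-th power and interchange summation and integration,
\[
\sum_{\chi\in X^*_q}V(\chi)^{2m}=\int_{[-T,T]^{2m}}\Big(\sum_{\chi\in X^*_q}\prod_{j=1}^{2m}\big|L(\tfrac12+it_j,f\otimes\chi)\big|\Big)\prod_{j=1}^{2m}\frac{\dif t_j}{1+|t_j|},
\]
and apply Corollary~\ref{cor1} with $k=2m$ and $a_1=\dots=a_{2m}=1$. This bounds the inner sum by $\varphi(q)(\log q)^{m/2}\prod_{1\le j<l\le 2m}\big(g_1(|t_j-t_l|)g_2(|t_j-t_l|)\big)^{1/2}$, so the task reduces to showing
\[
\int_{[-T,T]^{2m}}\ \prod_{1\le j<l\le 2m}\big(g_1(|t_j-t_l|)\,g_2(|t_j-t_l|)\big)^{1/2}\ \prod_{j=1}^{2m}\frac{\dif t_j}{1+|t_j|}\ \ll\ (\log q)^{(m-1)^2-m/2}.
\]

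The estimate of this multidimensional integral is the crux. I would decompose $[-T,T]^{2m}$ according to the clustering pattern of $(t_1,\dots,t_{2m})$ --- recording the dyadic scale of each gap $|t_j-t_l|$, or more economically the hierarchical partition of $\{1,\dots,2m\}$ these scales induce --- and on each piece replace $g_1,g_2$ by the appropriate branch of \eqref{gDef}, \eqref{g2Def} and estimate the volume (the weights $1/(1+|t_j|)$ always make the integral over a ``cluster centre'' converge). The extremal configuration is the one in which all $2m$ variables lie in a single interval of length $\asymp 1/\log q$: there each of the $\binom{2m}{2}=m(2m-1)$ pairs has $g_1\asymp\log q$ and $g_2\asymp 1$, so the integrand is $\asymp(\log q)^{m(2m-1)/2}$, while the volume is $\asymp(\log q)^{-(2m-1)}$, producing a contribution $\asymp(\log q)^{m(2m-1)/2-(2m-1)}=(\log q)^{(m-1)^2-m/2}$. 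One then checks that every other configuration (several looser clusters, widely separated points, and mixtures of these) is smaller by a genuine power of $\log q$, which in particular absorbs the $(\log\log q)^{O(1)}$ losses coming from the slowly growing branches of $g_1$ and $g_2$; the hypothesis $m>2$ (equivalently $2m>4$) is precisely what makes the single-cluster exponent $(m-1)^2-m/2$ the dominant one. Together with the first step this proves the theorem for integral $m$.

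Finally, for general real $m>2$ I would descend from integer moments to $\sum_\chi V(\chi)^{2m}$ through the distribution of $V(\chi)$: the integer case gives $\#\{\chi\in X^*_q:V(\chi)>v\}\ll v^{-2k}\varphi(q)(\log q)^{(k-1)^2}$ for every integer $k\ge 2$, and feeding the $v$-dependent optimal value of $k$ into $\sum_\chi V(\chi)^{2m}=2m\int_0^\infty v^{2m-1}\#\{\chi:V(\chi)>v\}\,\dif v$ recovers the exponent $(m-1)^2$, Harper's refinement \cite{Harper} of Soundararajan's argument \cite{Sound2009} being what eliminates the lower-order losses; this (or a direct appeal to Hölder's inequality) is carried out in \cite{Szab,G&Zhao24-06}. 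The main obstacle is the combinatorial bookkeeping in the third step --- verifying that every non-extremal clustering configuration genuinely loses a power of $\log q$, uniformly in the shifts $t_j$; a secondary point is the clean treatment under GRH of the error term $E(\chi)$ in the first step.
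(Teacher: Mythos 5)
Your opening reduction (smooth the cutoff, apply Mellin inversion, shift to $\Re s=\tfrac12$, truncate, and reduce matters to the $2m$-th moment over $\chi$ of $V(\chi)=\int|L(\tfrac12+it,f\otimes\chi)|(1+|t|)^{-1}\,\dif t$) is exactly the paper's Lemmas~\ref{Ssmooth} and~\ref{fdiff}. After that your route diverges from the paper's, and there are two genuine gaps.

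The first is the passage to non-integer $m$. Your core argument --- expanding $V(\chi)^{2m}$ into a $2m$-fold integral and applying Corollary~\ref{cor1} with $k=2m$ and $a_1=\cdots=a_{2m}=1$ --- requires $2m$ to be an even integer, and neither proposed repair recovers the exponent $(m-1)^2$ for real $m>2$. H\"older against a larger integer moment gives $S_m\ll\varphi(q)^{1-m/n}S_n^{m/n}$ and hence the exponent $\tfrac{m}{n}(n-1)^2$, which is strictly larger than $(m-1)^2$ for $n>m$ since $n\mapsto(n-1)^2/n$ is increasing. The tail-bound interpolation fares no better: integer moments give $\#\{\chi:V(\chi)>v\}\ll v^{-2k}\varphi(q)(\log q)^{(k-1)^2}$ only at integer $k$, and for $v=(\log q)^{\alpha}$ the optimal index is the non-integer value $k=1+\alpha$; rounding to the nearest integer loses a factor $(\log q)^{\delta^2}$ with $\delta$ the distance from $m$ to the nearest integer, so you land at $(\log q)^{(m-1)^2+\delta^2}$. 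This is precisely why Corollary~\ref{cor1} is stated for arbitrary positive real exponents $a_j$ and why the paper's moment computation (the bound \eqref{finiteintest}, modelled on Proposition~\ref{t3prop} and Szab\'o's Proposition~3) uses only two shift variables carrying the exponents $2$ and $2m-2$, the fractional power being absorbed by the shifted-moment bound itself rather than manufactured by interpolation afterwards. This structure also reduces the combinatorial bookkeeping from $2m$ interacting variables to essentially one pair.

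The second gap is in the clustering analysis itself. The claim that the single-cluster configuration is extremal and ``every other configuration is smaller by a genuine power of $\log q$'' fails for the truncation height you allow. In the region of $[-T,T]^{2m}$ where all $2m$ points are mutually separated by more than $e^e$, every factor $g_1g_2$ from \eqref{gDef} and \eqref{g2Def} is only $(\log\log T)^{O(1)}$, but the weighted volume $\prod_j\int_{-T}^{T}(1+|t_j|)^{-1}\dif t_j$ is $\asymp(\log T)^{2m}$, so this region contributes $\asymp(\log q)^{m/2}(\log T)^{2m+o(1)}$ to your bound for $\sum_\chi V(\chi)^{2m}$. With $T$ a fixed power of $q$ this is $(\log q)^{5m/2+o(1)}$, which exceeds the target $(\log q)^{(m-1)^2}$ for every $m$ in the range $2<m<\tfrac{9+\sqrt{65}}{4}\approx 4.27$; your proof therefore cannot close as described. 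To subordinate the separated configuration one needs $\log T\ll(\log q)^{(m-2)/(2m)}$, far smaller than $\varepsilon\log q$, and then the tail of the contour integral and the unsmoothing error $E(\chi)$ --- which you dismiss as ``acceptable'' --- become the delicate part, since the admissible smoothing width $U$ is tied to $T$ through the decay of $\widehat{\Phi}_U$. So the condition $m>2$ is not what makes the single cluster dominate all configurations; it is what makes $(m-1)^2-\tfrac m2=m^2-\tfrac{5m}{2}+1$ nonnegative, i.e.\ what makes the cluster dominate the separated regime only after the latter has been cut down by a sufficiently short truncation (or a faster-decaying weight). Both points need to be repaired before the argument constitutes a proof.
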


\begin{theorem}
\label{quadraticmean}
With the notation as above and the truth of GRH, for any integer $k \geq 1$ and any real number $m$ satisfying $2m \geq 2k+2$, we have, for large $Y \leq X$ and any $\varepsilon>0$,
\begin{align}
\label{mainestimation}
 T_{m}(X,Y;f) \ll XY^m(\log X)^{E(m,k,\varepsilon)},
\end{align}
where
\begin{equation}
\label{Edef}
 E(m,k,\varepsilon) = \max (2m^2-3m+k+1, (m-k)^2+2k^2-k+\varepsilon, (m-k)^2+2k^2-m+\varepsilon) .
 \end{equation}
  In particular, for $m \geq 2$,
\begin{align}
\label{mainestimationmlarge}
 T_m(X,Y;f) \ll XY^m(\log X)^{2m^2-3m+2}.
\end{align}
\end{theorem}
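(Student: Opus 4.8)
The plan is to adapt the method of \cite{Szab} and \cite{G&Zhao24-06}: pass from the character sum to the twisted $L$-function, invoke the shifted moment bound of Corollary~\ref{cor1quad}, and reduce everything to a combinatorial multiple integral to be estimated in the spirit of \cite{Sound2009} and \cite{Harper}. Since $f$ is cuspidal, $L(s, f\otimes\chi^{(8d)})$ is entire for every squarefree odd $d$. Starting from a truncated Perron formula for $\sum_{n\le Y}\chi^{(8d)}(n)\lambda_f(n)$ and shifting the contour to the critical line---which is legitimate under GRH, the latter also providing the Lindel\"of-type bounds on vertical lines that make the tail and the truncation error negligible---we get, with $T:=X^{B}$ for a large fixed $B$,
\begin{equation*}
\Big|\sum_{n\le Y}\chi^{(8d)}(n)\lambda_f(n)\Big|\ \ll\ Y^{1/2}\int_{-T}^{T}\frac{\big|L(\tfrac12+it, f\otimes\chi^{(8d)})\big|}{1+|t|}\,\dif t\ +\ O(X^{-1}),
\end{equation*}
uniformly in squarefree odd $d\le X$. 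Writing $\mathcal I_d$ for the integral, this yields $T_m(X,Y;f)\ll Y^{m}\sumstar_{\substack{(d,2)=1\\ d\le X}}\mathcal I_d^{\,2m}+O(1)$, so the task becomes to show $\sumstar_{d\le X}\mathcal I_d^{\,2m}\ll X(\log X)^{E(m,k,\varepsilon)}$.

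Next comes the reduction to Corollary~\ref{cor1quad}, and this is where the integer $k$ enters: the hypothesis $2m\ge 2k+2$ means we can afford to ``open up'' $2k$ genuine $L$-factors. We split $\mathcal I_d^{2m}=\mathcal I_d^{2k}\cdot\mathcal I_d^{2(m-k)}$, expand $\mathcal I_d^{2k}$ as a $2k$-fold integral over $(t_1,\dots,t_{2k})$ of $\prod_{j}(1+|t_j|)^{-1}\big|L(\tfrac12+it_j, f\otimes\chi^{(8d)})\big|$, and handle the remaining factor $\mathcal I_d^{2(m-k)}$ by H\"older's inequality combined with a dyadic decomposition of the range $[-T,T]$ (a dyadic, rather than crude, treatment of the portion with $|t|\gg 1$ being necessary to avoid an unnecessary loss), reducing it to integrals of $\big|L(\tfrac12+it_0, f\otimes\chi^{(8d)})\big|^{2(m-k)}$. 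Interchanging the sum over $d$ with the integrations and applying Corollary~\ref{cor1quad} with the $(2k+1)$-tuple of exponents $(2(m-k),1,\dots,1)$, we bound $\sumstar_{d\le X}\mathcal I_d^{\,2m}$ by $X(\log X)^{(m-k)^2+k/2+O(m-k)}$ times a $(2k+1)$-fold integral $\mathcal J$ of products of $g_1,g_2$ from~\eqref{gDef} and~\eqref{g2Def} evaluated at $|t_i-t_j|$, $|t_i+t_j|$ $(0\le i<j\le 2k)$ and $|2t_i|$ $(0\le i\le 2k)$, integrated against $\prod_{i}(1+|t_i|)^{-1}\dif t_i$.

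It then remains to estimate $\mathcal J$. Its bulk comes from ranges where the $t_i$ coalesce---some near one another, some near the negatives of others, some near the origin---since precisely there the $g_1$-factors attain their maximal size $\asymp\log X$; the $g_2$-factors contribute only further powers of $\log\log X$, which we absorb into the $\varepsilon$, as we do the mild losses from ranges near the breakpoints of $g_1$ and $g_2$. Decomposing $[-T,T]^{2k+1}$ according to the coalescence pattern and balancing, for each pattern, the power of $\log X$ coming from the large $g_1$-values against the logarithmic measure of that range, we obtain $\mathcal J\ll(\log X)^{c(m,k)+\varepsilon}$ for an explicit $c(m,k)$; collecting exponents then gives $\sumstar_{d\le X}\mathcal I_d^{\,2m}\ll X(\log X)^{E(m,k,\varepsilon)}$, the three entries of the maximum in~\eqref{Edef} corresponding to three regimes of the analysis---roughly, the ``fully coalesced'' pattern and the two patterns distinguished by whether the distinguished slot of weight $2(m-k)$ does, or does not, sit at the origin. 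Finally, for $m\ge 2$ we may take $k=1$, and a brief verification that $2m^2-3m+2$ dominates the other two entries of~\eqref{Edef} yields~\eqref{mainestimationmlarge}.

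The main obstacle is the estimate $\mathcal J\ll(\log X)^{c(m,k)+\varepsilon}$: one must pin down the coalescence pattern that maximizes the exponent of $\log X$ and check that this maximum is exactly the quantity in~\eqref{Edef}. This is genuinely more delicate than in the fixed-modulus case (Theorem~\ref{fixedmodmean}), both because of the extra ``$+$''-pairings $|t_i+t_j|$ and the diagonal terms $|2t_i|$ peculiar to the quadratic family, and because the symmetric-square factors enter the $g_2$-product with the larger exponent $a_i^2/4+a_i/2$ in place of the $a_i^2/4-a_i/2$ of the $g_1$-product, so that the $t_i$ close to the origin must be weighted with particular care. A secondary and more routine difficulty is making the Perron truncation effective uniformly in $d$, which is precisely where GRH---via a subconvex (indeed Lindel\"of-on-average) bound for $L(\tfrac12+it, f\otimes\chi^{(8d)})$---is used.
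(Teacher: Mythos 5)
Your overall strategy (reduce to an integral of $|L(\tfrac12+it,f\otimes\chi^{(8d)})|$ on the critical line, apply Corollary~\ref{cor1quad}, and estimate a combinatorial integral over coalescence patterns) is the right one, but there is a genuine gap at the heart of the argument: the reduction you choose cannot produce the exponent $E(m,k,\varepsilon)$, and the combinatorial estimate that is supposed to land on \eqref{Edef} is asserted rather than carried out. Concretely, you apply Corollary~\ref{cor1quad} with the exponent tuple $(2(m-k),1,\dots,1)$, expanding $\mathcal I_d^{2k}$ into $2k$ \emph{first} powers and H\"older-izing $\mathcal I_d^{2(m-k)}$ over intervals of length $O(1)$. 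Test this on the sub-region where all $2k+1$ variables lie in $[0,1/\log X]$: for $k=1$, $m=3$ the main factor is $(\log X)^{(16+1+1)/4}$, the $g_1$-cross terms contribute $(\log X)^{8+1}$, the diagonal of the heavy slot $(\log X)^{2}$, the light diagonals $(\log X)^{-1/2}$, and the measure is $(\log X)^{-3}$, for a net contribution $\asymp X(\log X)^{12}$ — exceeding $E(3,1,\varepsilon)=2m^2-3m+2=11$. Since every step in your chain is an upper bound in absolute values, your method cannot prove \eqref{mainestimationmlarge} for $m\geq 3$ (more generally it fails for $m>(3k+2)/2$). The paper's decomposition is structurally different: it writes $\big(\int_0^B|L|\big)^{2m}\ll\int_{[0,B]^k}\prod_a|L(\tfrac12+it_a)|^2\big(\int_{\mathcal D}|L(\tfrac12+iu)|\,\dif u\big)^{2m-2k}\dif\mathbf t$, i.e.\ $k$ anchor slots of weight $2$ and one floating slot of weight $2m-2k$ that is confined (via the sets $\mathcal A_{l_0}$) to the coalescence window around $t_1$ of measure $\asymp e^{l_0}/\log X$. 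The H\"older step $\big(\int_{t_1+\mathcal A_{l_0}}|L|\big)^{2m-2k}\le|\mathcal B_{l_0}|^{2m-2k-1}\int|L|^{2m-2k}$ then harvests the factor $(\log X)^{-(2m-2k-1)}$ from the smallness of that window; this gain is exactly what your version forfeits by applying H\"older over windows of measure $O(1)$.

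Two secondary points. First, your truncated-Perron error $O(X^{-1})$ is not attainable uniformly in $d$: the truncation error contains terms like $\sum_{n\sim Y}|\lambda_f(n)|\min(1,Y/(T|n-Y|))$, which is $\gg$ the size of individual coefficients near $n=Y$; an error $O(Y^{\varepsilon})$ is what one gets and it would still suffice after taking $2m$-th powers, so this is a fixable slip, but as written it is wrong. The paper avoids the issue by smoothing with $\Phi_U$ and disposing of the rough remainder separately (Lemma~\ref{fdiffquad}), which requires Heath-Brown's quadratic large sieve together with Cauchy--Schwarz and the $(4m-2)$-th moment — a component absent from your outline. Second, even granting your reduction, the claim that the coalescence analysis yields exactly the three entries of \eqref{Edef} is the entire content of the paper's Proposition~\ref{t3prop} (its Cases 1 and 2, the latter split according to $|u|\gtrless 5$), and you have not performed it; as the computation above shows, for your exponent tuple it would in fact yield different, and for large $m$ larger, exponents.
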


   We note that \eqref{mainestimationmlarge} follows from \eqref{mainestimation} by setting $k=1$ there.  Also observe that H\"older's inequality implies that for any real number $n>1$,
\begin{align*}
 S_m(q,Y;f) \ll  \varphi(q)^{1-1/n}(S_{mn}(q,Y;f))^{1/n}.
\end{align*}
   This, together with Theorem \ref{fixedmodmean}, now yields that $S_m(q,Y;f) \ll \varphi(q)Y^m(\log q)^{O(1)}$ for any $m>0$, by choosing $n$ large enough. Analogously, $T_m(X,Y;f) \ll XY^m(\log X)^{O(1)}$ for any $m>0$.

\section{Preliminaries}
\label{sec 2}

In this section, we include some results needed in the proof of the theorems.

\subsection{Cusp form $L$-functions}
\label{sec:cusp form}

    We reserve the letter $p$ for a prime number throughout in this paper.  For any $\chi \in X^*_q$, the twisted modular $L$-function $L(s, f \otimes \chi)$ for $\Re(s) > 1$ is defined to be
\begin{align}
\label{Lphichi}
L(s, f \otimes \chi) &= \sum_{n=1}^{\infty} \frac{\lambda_f(n)\chi(n)}{n^s}
 = \prod_{p\nmid q} \left(1 - \frac{\lambda_f (p) \chi(p)}{p^s}  + \frac{1}{p^{2s}}\right)^{-1}=\prod_{p\nmid q} \left(1 - \frac{\alpha_p \chi(p)}{p^s} \right)^{-1}\left(1 - \frac{\beta_p \chi(p)}{p^s} \right)^{-1}.
\end{align}
 By Deligne's proof \cite{D} of the Weil conjecture, we know that
\begin{align}
\label{alpha}
|\alpha_{p}|=|\beta_{p}|=1, \quad \alpha_{p}\beta_{p}=1.
\end{align}
  It follows that $\lambda_f (n) \in \mr$ such that $\lambda_f (1) =1$ and $|\lambda_f(n)|
\leq d(n)$, for $n \geq 1$, where $d(n)$ is the number of positive divisors $n$. \newline

    Recall (see \cite[Proposition 14.20]{iwakow}) that the function $L(s, f \otimes \chi)$ is entire and satisfies the functional equation given by
\begin{align}
\label{equ:FE}
\Lambda (s, f \otimes \chi) = \left(\frac{q}{2\pi} \right)^s \Gamma \Big( s + \frac{\kappa -1}{2} \Big) L(s, f \otimes \chi)
= i^\kappa \frac{\tau(\chi)^2}{q} \Lambda (1- s, f \otimes \overline \chi),
\end{align}
 where $\tau(\chi)$ is the Gauss sum associated with $\chi$. \newline

 The symmetric square $L$-function $L(s, \operatorname{sym}^2 f)$ of $f$ is then defined for $\Re(s)>1$ by
 (see \cite[p. 137]{iwakow} and \cite[(25.73)]{iwakow})
\begin{align}
\label{Lsymexp}
\begin{split}
 L(s, \operatorname{sym}^2 f)=& \prod_p(1-\alpha^2_pp^{-s})^{-1}(1-p^{-s})^{-1}(1-\beta^2_pp^{-s})^{-1} = \zeta(2s) \sum_{n \geq 1}\frac {\lambda_f(n^2)}{n^s}=\prod_{p}\Big( 1-\frac {\lambda_f(p^2)}{p^s}+\frac {\lambda_f(p^2)}{p^{2s}}-\frac {1}{p^{3s}} \Big)^{-1}.
\end{split}
\end{align}

  Also, for any $\chi \in X^*_q$, the twisted symmetric square $L$-function $L(s, \operatorname{sym}^2 f \otimes \chi)$ of $f$ is defined for $\Re(s)>1$ by
 (see \cite{Shimura})
\begin{align}
\label{Ltwistedsymexp}
\begin{split}
 L(s, \operatorname{sym}^2 f \otimes \chi)=& \prod_p(1-\alpha^2_p\chi(p)p^{-s})^{-1}(1-\chi(p)p^{-s})^{-1}(1-\beta^2_p\chi(p)p^{-s})^{-1} =L(s, \chi^2) \sum_{n \geq 1}\frac {\lambda_f(n^2)\chi(n)}{n^s}.
\end{split}
\end{align}

It follows from a result of G. Shimura \cite{Shimura} that neither $L(s, \operatorname{sym}^2 f)$ nor $L(s, \operatorname{sym}^2 f \otimes \chi)$ has a pole at $s=1$. Moreover, the corresponding completed symmetric square $L$-function
\begin{align*}
 \Lambda(s, \operatorname{sym}^2 f)=& \pi^{-3s/2}\Gamma \Big(\frac {s+1}{2}\Big)\Gamma \Big(\frac {s+\kappa-1}{2}\Big) \Gamma \Big(\frac {s+\kappa}{2}\Big) L(s, \operatorname{sym}^2 f)
\end{align*}
  is entire and satisfies the functional equation
$\Lambda(s, \operatorname{sym}^2 f)=\Lambda(1-s, \operatorname{sym}^2 f)$.
  Similarly, for complex numbers $k_j, 1\leq j\leq 3$ depending on the weight of $f$ and the parity of $\chi$ with $\Re(k_j) \geq 0$, the completed twisted symmetric square $L$-function
\begin{align}
\label{Lambdafchidef}
 \Lambda(s, \operatorname{sym}^2 f \otimes \chi)=& \Big(\frac {q}{\pi}\Big)^{3s/2}\Gamma \Big(\frac {s+k_1}{2}\Big)\Gamma \Big(\frac {s+k_2}{2}\Big) \Gamma \Big(\frac {s+k_3}{2}\Big) L(s, \operatorname{sym}^2 f),
\end{align}
  is entire and endowed with the functional equation
$\Lambda(s, \operatorname{sym}^2 f\otimes \chi)=\epsilon(f, \chi)\Lambda(1-s, \operatorname{sym}^2 f \otimes \overline \chi)$.
Here the $\epsilon$-factor has complex modulus 1.  It also follows from \eqref{Lambdafchidef} that the trivial zeros of $\Lambda(s, \operatorname{sym}^2 f \otimes \chi)$ are located at
\begin{align}
\label{trivialzeros}
 s=-2n-k_j, \quad 1 \leq j \leq 3, n \geq 0.
\end{align}

  Our next result provides estimations on $L'/L(s, \operatorname{sym}^2 f \otimes \chi)$.
\begin{proposition}
\label{prop:Llogderbound}
   With the notation as above, writing  $s=\sigma+it$ with $\sigma, t \in \mr$ and denoting the zeros different from $0$, $1$ of $\Lambda(s, \operatorname{sym}^2 f \otimes \chi)$ by $\rho = \beta + i\gamma$ with $\beta, \gamma \in \mr$.  Let be $r$ be the order of vanishing of $L(s, \operatorname{sym}^2 f \otimes \chi)$ at $1$. We have for uniformly for $-1 \leq \sigma \leq 2$,
\begin{align}
\label{LprimeLboundsigmasmall}
\begin{split}
    \frac{L'}{L}(s, \operatorname{sym}^2 f \otimes \chi)=-\frac {r}s-\frac {r}{s-1}+\sum_{\substack{k_j \\ |s+k_j|<1}}\frac 1{s+k_j}+\sum_{\substack{\rho \\ |\gamma-t| < 1}}\frac 1{s-\rho}+O(\log (q(|t| + 2))).
\end{split}
\end{align}
    For each real number $T \geq \max(|\Im k_j|)+2$, there is a $T_1$ with $T \leq T_1 \leq T + 1$, such that uniformly for $-1 \leq \sigma \leq 2$,
\begin{align}
\label{LprimeLboundsigmasmallspecial}
\begin{split}
    \frac{L'}{L}(\sigma\pm iT_1, \operatorname{sym}^2 f \otimes \chi) \ll \log^2 (q(|T| + 2)).
\end{split}
\end{align}

Let $\mathcal{A}$ denote the set of points $s \in \mc$ such that $\sigma \leq -1$, $|s + 2n-k_j| \geq 1/10$ for each $k_j, 1 \leq j \leq 3$ and each positive integer $n$. Then, uniformly for $s \in \mathcal{A}$,
\begin{align}
\label{LprimeLboundsigmasmaller}
\begin{split}
    \frac{L'}{L}(s, \operatorname{sym}^2 f \otimes \chi) =O(\log (2q|s|)).
\end{split}
\end{align}
\end{proposition}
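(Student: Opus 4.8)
The three estimates in Proposition~\ref{prop:Llogderbound} all descend from the Hadamard factorization of the completed $L$-function, and I would establish them in turn. For \eqref{LprimeLboundsigmasmall}, the starting point is that $\Lambda(s,\operatorname{sym}^2 f\otimes\chi)$ is entire of order $1$, so that logarithmic differentiation of its Hadamard product, combined with the definition \eqref{Lambdafchidef}, yields an identity
\[
\frac{L'}{L}(s,\operatorname{sym}^2 f\otimes\chi)=C-\frac{3}{2}\log\frac{q}{\pi}-\frac12\sum_{j=1}^{3}\frac{\Gamma'}{\Gamma}\Big(\frac{s+k_j}{2}\Big)+\frac{r}{s}+\frac{r}{s-1}+\sum_{\rho}\Big(\frac{1}{s-\rho}+\frac{1}{\rho}\Big),
\]
where $C$ is a constant, the terms $r/s$ and $r/(s-1)$ come from pulling the zeros of $\Lambda$ at $0$ and $1$ out of the product, and $\rho$ runs over the remaining zeros. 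I would then subtract the same identity evaluated at $s=2+it$, a point in the region of absolute convergence, where $L'/L$ is $O(1)$ and every zero satisfies $|2+it-\rho|\ge 1$; this removes $C$, the term $\tfrac32\log\tfrac q\pi$, and all the $1/\rho$. The remaining difference is then estimated by (i) invoking $\tfrac{\Gamma'}{\Gamma}(z)=-z^{-1}+O(1)$ near $z=0$ to split off the terms $1/(s+k_j)$ with $|s+k_j|<1$ while bounding the other $\Gamma'/\Gamma$ values by $O(\log(q(|t|+2)))$ via Stirling (for $-1\le\sigma\le 2$ these gamma factors have no pole other than $z=0$ within distance $1$); (ii) keeping the $O(\log(q(|t|+2)))$ terms $1/(s-\rho)$ with $|\gamma-t|<1$; and (iii) bounding the tail by $\big|\tfrac1{s-\rho}-\tfrac1{2+it-\rho}\big|\le(2-\sigma)|t-\gamma|^{-2}$, summed dyadically over $|t-\gamma|\in[n,n+1)$ against the standard zero-counting bound $\#\{\rho:|\gamma-T|\le 1\}\ll\log(q(|T|+2))$ (itself a consequence of Jensen's inequality for $\Lambda$ on a bounded disc around $2+iT$). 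Each piece contributes $O(\log(q(|t|+2)))$, which is \eqref{LprimeLboundsigmasmall}.

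For \eqref{LprimeLboundsigmasmallspecial}, I would use the same zero-counting estimate: there are $O(\log(q(|T|+2)))$ zeros with $\big||\gamma|-|T|\big|\le 2$, so a pigeonhole argument produces $T_1\in[T,T+1]$ at distance $\gg 1/\log(q(|T|+2))$ from each of their ordinates, hence from every zero relevant to \eqref{LprimeLboundsigmasmall} at the points $s=\sigma\pm iT_1$. Inserting such an $s$ into \eqref{LprimeLboundsigmasmall}, the $O(\log(q(|T|+2)))$ terms $1/(s-\rho)$ are each $\ll\log(q(|T|+2))$, the terms $r/s$ and $r/(s-1)$ are $O(1)$, and the hypothesis $T\ge\max_j|\Im k_j|+2$ forces $|s+k_j|>1$ for every $j$, so the $1/(s+k_j)$ terms do not occur; hence the bound $\ll\log^2(q(|T|+2))$.

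For \eqref{LprimeLboundsigmasmaller} I would instead use the functional equation. Logarithmically differentiating $\Lambda(s,\operatorname{sym}^2 f\otimes\chi)=\epsilon(f,\chi)\Lambda(1-s,\operatorname{sym}^2 f\otimes\overline\chi)$ and expanding both sides through \eqref{Lambdafchidef} gives
\[
\frac{L'}{L}(s,\operatorname{sym}^2 f\otimes\chi)=-3\log\frac{q}{\pi}-\frac12\sum_{j=1}^{3}\Big[\frac{\Gamma'}{\Gamma}\Big(\frac{s+k_j}{2}\Big)+\frac{\Gamma'}{\Gamma}\Big(\frac{1-s+k_j}{2}\Big)\Big]-\frac{L'}{L}(1-s,\operatorname{sym}^2 f\otimes\overline\chi).
\]
On $\mathcal{A}$ one has $\Re(1-s)\ge 2$, so the last term is $O(1)$ by absolute convergence, the first term is $O(\log q)$, and $\tfrac{\Gamma'}{\Gamma}\big(\tfrac{1-s+k_j}{2}\big)=O(\log(|s|+2))$ by Stirling since its argument has positive real part. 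For $\tfrac{\Gamma'}{\Gamma}\big(\tfrac{s+k_j}{2}\big)$, whose argument can lie deep in the left half-plane, I would apply the reflection identity $\tfrac{\Gamma'}{\Gamma}(w)=\tfrac{\Gamma'}{\Gamma}(1-w)-\pi\cot(\pi w)$ when $\Re w<\tfrac12$ and Stirling otherwise, obtaining $\tfrac{\Gamma'}{\Gamma}(1-w)=O(\log(|s|+2))$ and $\pi\cot(\pi w)=O(1)$ — the latter because on $\mathcal{A}$ the point $(s+k_j)/2$ stays bounded away from every integer. Assembling the pieces gives $L'/L(s,\operatorname{sym}^2 f\otimes\chi)\ll\log q+\log(|s|+2)\ll\log(2q|s|)$ for $s\in\mathcal{A}$.

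The routine ingredients here — the Hadamard and Stirling manipulations and the zero-counting estimate — are entirely classical; the one point demanding genuine care is the claim just used, that $(s+k_j)/2$ avoids the integers on $\mathcal{A}$. The poles of $\cot(\pi(s+k_j)/2)$ sit at $s=2m-k_j$ with $m\in\mathbb{Z}$, and using that the $k_j$ are non-negative integers one checks that those meeting $\{\Re s\le -1\}$ are, after the change of index $n\leftrightarrow k_j-m$, precisely the points killed by the defining inequalities $|s+2n-k_j|\ge 1/10$ of $\mathcal{A}$ (the value $m=0$, i.e.\ $s=-k_j$, being the excluded point with $n=k_j$ when $k_j\ge 1$, and lying off $\{\Re s\le -1\}$ when $k_j=0$); a $1/10$-margin for $s$ then gives the distance $\ge 1/20$ of $(s+k_j)/2$ from the integers, whence $\cot(\pi(s+k_j)/2)=O(1)$ on $\mathcal{A}$. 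I expect this bookkeeping with the archimedean parameters, rather than any analytic difficulty, to be the main obstacle.
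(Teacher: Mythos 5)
Your proposal is correct and follows essentially the same route as the paper: the first two estimates via the Hadamard/partial-fraction expansion plus the zero-counting bound in unit rectangles (which the paper simply imports from \cite[Proposition 5.7, Theorem 5.8]{iwakow} rather than reproving), and the third via the functional equation, Stirling, and boundedness of $\cot\bigl(\tfrac{\pi}{2}(s+k_j)\bigr)$ on $\mathcal{A}$ (the paper logarithmically differentiates an asymmetric form of the functional equation obtained from the $\Gamma$-reflection formula, which is the same computation as your digamma reflection). Your closing bookkeeping reconciling the poles of the cotangent with the paper's stated definition of $\mathcal{A}$ is also the right reading of what is evidently a sign slip there relative to \eqref{trivialzeros}.
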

\begin{proof}
  We first apply \cite[Theorem 5.8]{iwakow} to see that the number of zeros in the rectangle $0 \leq \beta \leq 1, T \leq \gamma \leq T + 1$ is $\ll \log (q(|T | + 2))$. Moreover, as $L(s, \operatorname{sym}^2 f \otimes \chi)$  is entire by our earlier discussion in the previous section, it follows from \cite[Proposition 5.7]{iwakow} that the estimation given in \eqref{LprimeLboundsigmasmall} is valid. \newline

  It follows from \cite[Theorem 5.8]{iwakow} that there is a $T_1 \in [T, T + 1]$ such that $|T_1-\gamma | \gg 1/ \log (q(|T | + 2))$ for all zeros $\rho$. This implies that each summand in \eqref{LprimeLboundsigmasmall} is $\ll \log (q(|T | + 2))$. By \cite[Theorem 5.8]{iwakow} again, the number of summands is $\ll \log (q(|T | + 2))$, which leads to the bound in \eqref{LprimeLboundsigmasmallspecial}. \newline

   Note that the reflection formula for the gamma function \cite[(C.2)]{MVa1} asserts
\begin{align*}
\begin{split}
    \Gamma(s)\Gamma(1-s)=\frac {\pi}{\sin \pi s}.
\end{split}
\end{align*}
  It follows from this that we have
\begin{align*}
\begin{split}
    \frac {\Gamma(\frac {1-s+k_i}{2})}{\Gamma(\frac {s+k_i}{2})}=\frac 1{\pi} \Gamma\Big(\frac {1-s+k_i}{2}\Big)\Gamma\Big(1-\frac {s+k_i}{2}\Big)\sin \Big(\frac {\pi}{2}(s+k_i)\Big).
\end{split}
\end{align*}
  The above allows us to deduce from \eqref{Lambdafchidef} an asymmetric functional equation for $L(s, \operatorname{sym}^2 f \otimes \chi)$ given by
\begin{align*}
\begin{split}
    L(s, \operatorname{sym}^2 f \otimes \chi)=\frac {\epsilon(f, \chi)}{\pi^3}\Big(\frac {q}{\pi}\Big)^{3(1/2-s)}L(1-s, \operatorname{sym}^2 f \otimes \overline \chi)\prod^3_{j=1}\Big (\Gamma\Big(\frac {1-s+k_j}{2}\Big)\Gamma\Big(1-\frac {s+k_j}{2}\Big)\sin \Big(\frac {\pi}{2}(s+k_j)\Big)\Big ).
\end{split}
\end{align*}
  By logarithmically differentiating both sides above, we get that for $\sigma \leq -1$,
\begin{align}
\label{logLderrel}
\begin{split}
    \frac {L'}{L}(s, \operatorname{sym}^2 f \otimes \chi)=-\frac {L'}{L}(1-s, \operatorname{sym}^2 f \otimes \chi)+\frac {\pi}{2}\sum^3_{j=1}\cot \Big(\frac {\pi}{2}(s+k_j)\Big)+O(\log (q(|s|+2))).
\end{split}
\end{align}
Note that from \cite[Theorem C.1]{MVa1},
\begin{align*}
\begin{split}
    \frac {\Gamma'}{\Gamma}\Big(\frac {1-s+k_j}{2}\Big), \ \  \frac {\Gamma'}{\Gamma}\Big(1-\frac {s+k_j}{2}\Big) \ll \log (|s|+2).
\end{split}
\end{align*}

   It follows from the proof of \cite[Lemma 12.9]{MVa1} that for points $s \in \mathcal{A}$ and each $1 \leq j \leq 3$,
\begin{align}
\label{cotbound}
\begin{split}
    \cot \Big(\frac {\pi}{2}(s+k_j)\Big) \ll 1.
\end{split}
\end{align}

  Moreover, if $\sigma \leq -1$, then $\Re(1-s) \geq 2$, in which case we see from \eqref{Ltwistedsymexp} that
\begin{align}
\label{Ltwistedsymsigmalargebound}
\begin{split}
 L(1-s, \operatorname{sym}^2 f \otimes \chi)=L(1-s, \chi^2) \sum_{n \geq 1}\frac {\lambda_f(n^2)\chi(n)}{n^{1-s}} \ll \zeta(1-\sigma)\sum_{n \geq 1}\frac {d(n^2)}{n^{1-\sigma}} \ll 1,
\end{split}
\end{align}
  where the well-known bound (see \cite[Theorem 2.11]{MVa1}) that for any $\varepsilon>0$, 
\begin{align}
\label{divisorbound}
\begin{split}  
  d(n) \ll n^{\varepsilon}.
\end{split}
\end{align}  
is used. \newline

   We deduce from \eqref{logLderrel}--\eqref{Ltwistedsymsigmalargebound} that the estimation given in \eqref{LprimeLboundsigmasmaller} holds for points $s \in \mathcal{A}$. This completes the proof of the proposition.

\end{proof}
\subsection{Sums over primes}
\label{sec2.1}

 We include in this section a few results that evaluates various sums over primes asymptotically.
\begin{lemma}
\label{RS}
 Let $x \geq 2$. We have, for some constant $b_1, b_2$,
\begin{align}
\label{merten}
\sum_{p\le x} \frac{1}{p} =& \log \log x + b_1+ O\Big(\frac{1}{\log x}\Big), \\
\label{mertenpartialsummation}
\sum_{p\le x} \frac {\log p}{p} =& \log x + O(1), \quad \mbox{and} \\
\label{merten1}
\sum_{p\le x} \frac{\lambda^2_f(p)}{p} =& \log \log x + b_2+ O\Big(\frac{1}{\log x}\Big).
\end{align}
\end{lemma}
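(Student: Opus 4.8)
The plan is to handle the three estimates separately, the first two being classical and only the third requiring input from automorphic $L$-functions. For \eqref{mertenpartialsummation} I would invoke Mertens' first theorem: from Legendre's formula $\log\lfloor x\rfloor! = \sum_{p^{\nu}\le x}\lfloor x/p^{\nu}\rfloor\log p$ together with Stirling, and discarding the higher prime powers (whose contribution is $O(1)$ since $\sum_p\sum_{\nu\ge 2}(\log p)/p^{\nu}$ converges), one obtains $\sum_{p\le x}(\log p)/p=\log x+O(1)$; this is \cite[Theorem 2.7]{MVa1}. Then \eqref{merten} follows from \eqref{mertenpartialsummation} by partial summation against the weight $1/\log t$, the constant $b_1$ being the resulting Mertens constant and the error $O(1/\log x)$ coming from the $O(1)$ term in \eqref{mertenpartialsummation}; this too is \cite[Theorem 2.7]{MVa1}.

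For \eqref{merten1} I would use the Hecke relation $\lambda_f(p)^2=\lambda_f(p^2)+1$, valid since $\lambda_f(1)=1$, to write
\[
\sum_{p\le x}\frac{\lambda_f^2(p)}{p}=\sum_{p\le x}\frac1p+\sum_{p\le x}\frac{\lambda_f(p^2)}{p},
\]
the first sum being governed by \eqref{merten}. For the second sum I would appeal to the symmetric square $L$-function: from the Euler product \eqref{Lsymexp} and the bound $|\lambda_f(p^2)|\le d(p^2)=3$ one has
\[
\log L(s,\operatorname{sym}^2 f)=\sum_p\frac{\lambda_f(p^2)}{p^s}+G(s),
\]
where $G(s)$ is holomorphic and bounded on every half-plane $\Re(s)\ge\tfrac12+\delta$ with $\delta>0$ fixed. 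Since $L(s,\operatorname{sym}^2 f)$ is entire (Shimura, as recalled in Section~\ref{sec:cusp form}) and, being a GL$(3)$ $L$-function, non-vanishing at $s=1$ with a classical zero-free region of de la Vall\'ee Poussin type — and a fortiori non-vanishing on $\Re(s)=1$ under the standing GRH — the prime number theorem for $L(s,\operatorname{sym}^2 f)$ yields $\sum_{p\le x}\lambda_f(p^2)\log p\ll x\exp(-c\sqrt{\log x})$ after discarding the $O(\sqrt x)$ contribution of the higher prime powers. Partial summation against $1/\log t$ then gives $\sum_{p\le x}\lambda_f(p^2)/p=c_0+O(1/\log x)$ for some constant $c_0$, and setting $b_2=b_1+c_0$ completes the proof.

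The only genuinely non-elementary step is this last one, and the point to be careful about is the analytic input for $L(s,\operatorname{sym}^2 f)$: one needs its holomorphy at $s=1$ (Shimura's theorem, quoted above), its non-vanishing at $s=1$, and a standard zero-free region — all known for the symmetric square and in any case subsumed by GRH. Everything else (discarding higher prime-power contributions, passing from a $\psi$-type sum to a sum over primes, and the two partial summations) is routine. One could equivalently run the argument through the Rankin--Selberg $L$-function $L(s,f\times f)=\zeta(s)L(s,\operatorname{sym}^2 f)$, using $\log L_p(s,f\times f)=\lambda_f(p)^2 p^{-s}+O(p^{-2\Re(s)})$ (from $\lambda_f(p)^2=\alpha_p^2+\beta_p^2+2$ and $\alpha_p\beta_p=1$) together with the simple pole of $L(s,f\times f)$ at $s=1$, which produces the $\log\log x$ main term of \eqref{merten1} directly; the two formulations are equivalent.
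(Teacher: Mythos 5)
Your proposal is correct and follows the standard route: the paper itself disposes of \eqref{merten} and \eqref{mertenpartialsummation} by citing \cite[Theorem 2.7]{MVa1} exactly as you do, and of \eqref{merten1} by citing \cite[Lemma 2.1]{GHH}, whose content is precisely the argument you sketch (the Hecke relation $\lambda_f(p)^2=\lambda_f(p^2)+1$ plus the holomorphy and non-vanishing of $L(s,\operatorname{sym}^2 f)$ at $s=1$, or equivalently the simple pole of the Rankin--Selberg $L$-function). Your analytic details — the unconditional zero-free region, the bound $\sum_{p\le x}\lambda_f(p^2)\log p\ll x\exp(-c\sqrt{\log x})$, and the two partial summations yielding the $O(1/\log x)$ error — are all sound, and you rightly note that GRH is not actually needed for this lemma.
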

\begin{proof}
  The expressions in \eqref{merten} and \eqref{mertenpartialsummation} can be found in parts (d) and (b) of \cite[Theorem 2.7]{MVa1}, respectively. The estimation given in \eqref{merten1} follows from \cite[Lemma 2.1]{GHH}.  
\end{proof}

\begin{lemma}
\label{RS1}
 Assume that GRH holds for $L(s, \chi)$ and $L(s, \operatorname{sym}^2 f \otimes \chi)$. Let $\chi$ be a primitive Dirichlet character modulo $q$ and denote $\chi_0$ the principal character modulo $q$.  Then we have for $x \geq 2, t_0 \in \mr$,
\begin{align}
\label{PIT}
 \sum_{p \leq x } \chi(p) p^{-it_0}\log p  = &O(\sqrt{x} \left(\log 2q(x+|t_0|))^2 \right), \quad \chi \neq \chi_0, \quad \mbox{and} \\
\label{PIT3}
 \sum_{p \leq x } \chi(p)\lambda_f(p^2) p^{-it_0}\log p  =& O(\sqrt{x} \left(\log 2q(x+|t_0|))^2 \right).
\end{align}
\end{lemma}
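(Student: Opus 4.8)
The plan is to reduce both estimates to the classical explicit-formula machinery; I describe \eqref{PIT3} in detail, since \eqref{PIT} is its well-known prototype.

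\emph{Step 1: reduction to a logarithmic derivative.} From the Euler product \eqref{Ltwistedsymexp}, the bound \eqref{alpha}, and the Hecke relation $\lambda_f(p^2)=\lambda_f(p)^2-1=\alpha_p^2+1+\beta_p^2$, one obtains for $\Re s>1$ that
\[
-\frac{L'}{L}(s,\operatorname{sym}^2 f\otimes\chi)=\sum_{n\ge 1}c_\chi(n)n^{-s},\qquad c_\chi(p)=\lambda_f(p^2)\chi(p)\log p,
\]
where $c_\chi(n)=0$ unless $n$ is a prime power and $|c_\chi(n)|\le 3\Lambda(n)$ for all $n$. Hence, setting $\Psi(x;t_0):=\sum_{n\le x}c_\chi(n)n^{-it_0}$, the contribution of the prime powers $n=p^m$ with $m\ge2$ is $O(\sqrt x\log x)$, and it remains to prove $\Psi(x;t_0)\ll\sqrt x\,(\log 2q(x+|t_0|))^2$.

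\emph{Step 2: Perron formula and contour shift.} I would apply the truncated Perron formula \cite{MVa1} to $-\frac{L'}{L}(s+it_0,\operatorname{sym}^2 f\otimes\chi)=\sum_n c_\chi(n)n^{-it_0}n^{-s}$, with line of integration $\Re s=c:=1+1/\log x$ and truncation height chosen as in \eqref{LprimeLboundsigmasmallspecial}, and then move the contour to a vertical line $\Re s=-c_0$ with a fixed $c_0\ge1$ selected so that the whole new contour avoids the trivial zeros \eqref{trivialzeros} and lies in the region of Proposition~\ref{prop:Llogderbound} where \eqref{LprimeLboundsigmasmallspecial} and \eqref{LprimeLboundsigmasmaller} apply. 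Since $L(s,\operatorname{sym}^2 f\otimes\chi)$ is entire and, under GRH, does not vanish at $s=1$, the order $r$ of Proposition~\ref{prop:Llogderbound} is $0$, so there is no polar main term; the residues crossed are $-m_\rho x^{\rho-it_0}/(\rho-it_0)$ over the non-trivial zeros $\rho=\tfrac12+i\gamma$ with $|\gamma-t_0|\le T$, together with those at $s=0$ and at the finitely many trivial zeros in $-c_0<\Re s\le 0$, which contribute $O(\log 2q(|t_0|+2))$ since there $|x^{s}|\le1$ and $L'/L$ is controlled by \eqref{LprimeLboundsigmasmall}. On the horizontal segments one has $L'/L\ll(\log 2q(T+|t_0|))^2$ by \eqref{LprimeLboundsigmasmallspecial}, giving integrals $\ll x(\log 2q(x+|t_0|))^2/(T\log x)$; on the left line $\Re s=-c_0$ one has $L'/L\ll\log 2q(|s|+2)$ by \eqref{LprimeLboundsigmasmaller}, so that piece is $\ll x^{-c_0}(\log 2q(x+|t_0|))^2$. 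Taking $T=x$ renders all of these, as well as the Perron truncation error $O(x(\log x)^2/T+\log x)$, of size $O((\log 2q(x+|t_0|))^2)$, leaving
\[
\Psi(x;t_0)=-\sum_{|\gamma-t_0|\le x}\frac{m_\rho\,x^{\rho-it_0}}{\rho-it_0}+O\big((\log 2q(x+|t_0|))^2\big).
\]

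\emph{Step 3: the sum over zeros.} Under GRH, $|x^{\rho-it_0}|=\sqrt x$ and $|\rho-it_0|\ge\max(\tfrac12,|\gamma-t_0|)$, while \cite[Theorem~5.8]{iwakow} (as already used in the proof of Proposition~\ref{prop:Llogderbound}) gives $\ll\log(q(|t_0|+|v|+2))$ zeros with $|\gamma-t_0-v|\le1$ for any $v$. Summing over unit intervals yields $\sum_{|\gamma-t_0|\le x}m_\rho/|\rho-it_0|\ll(\log 2q(x+|t_0|))^2$, hence $\Psi(x;t_0)\ll\sqrt x\,(\log 2q(x+|t_0|))^2$, which is \eqref{PIT3}. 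Estimate \eqref{PIT} follows by the same argument with $L(s,\operatorname{sym}^2 f\otimes\chi)$ replaced by $L(s,\chi)$, which is entire for $\chi\ne\chi_0$ (so again no main term), invoking GRH for $L(s,\chi)$, the classical zero count for Dirichlet $L$-functions, and the elementary analogue of Proposition~\ref{prop:Llogderbound}. I expect the only genuine difficulty to be Step~2: one must push the contour past all the non-trivial zeros, hence well into and beyond the critical strip, which is feasible only because Proposition~\ref{prop:Llogderbound} supplies the uniform bounds for $L'/L(s,\operatorname{sym}^2 f\otimes\chi)$ on suitable horizontal lines and for $\Re s\le-1$; granting those bounds and the zero-counting estimate, the remainder is routine.
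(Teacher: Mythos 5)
Your argument is correct and follows essentially the same route as the paper: reduce to the Dirichlet series of $-\tfrac{L'}{L}(s+it_0,\operatorname{sym}^2 f\otimes\chi)$, apply truncated Perron with $T\asymp x$, shift the contour left using the bounds of Proposition~\ref{prop:Llogderbound} on the horizontal and left vertical pieces, and estimate the resulting sum over non-trivial zeros via GRH and the zero-counting bound. The only (harmless) deviations are that the paper pushes the left line to $\Re s=-K$ and lets $K\to\infty$ rather than stopping at a fixed $-c_0$, and it allows for a possible double pole at $s=0$, whose residue it bounds by $O\bigl((\log x)\log(q(|t_0|+2))\bigr)$ rather than your $O(\log 2q(|t_0|+2))$ --- still comfortably within the target bound.
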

\begin{proof}
  As the proofs are similar, we only give a proof of \eqref{PIT3} here. We may assume that $Y \geq \max(|\Im k_j|)+2$, for otherwise the bound is trivial.  we first derive from \eqref{Lphichi}, \eqref{Lsymexp} and \eqref{Ltwistedsymexp} that
\begin{align}
\label{sumofsquares}
  \alpha_p+\beta_p=  \lambda_f(p) \quad \mbox{and} \quad  \alpha^2_p+\beta^2_p=& \lambda^2_f(p)-2=\lambda_f(p^2)-1.
\end{align}
 We then observe that by \eqref{alpha} and the estimation that $\sum_{p \leq Y}\log p \ll Y$ (see \cite[Corollary 2.6]{MVa1}) that
\begin{align}
\label{twistedsumoverprimes}
\begin{split}
    \sum_{p \leq Y}\frac {\chi(p)\lambda_f(p^2)\log p}{p^{it_0}}= & \sum_{n \leq Y}\frac {\chi(n)c(n)\Lambda(n)}{n^{it_0}}+O(Y^{1/2}\log Y),
\end{split}
\end{align}
  where $\Lambda(n)$ is the von Mangoldt function, and $c(n)$ is the function supported on prime powers with $c(p^m)=\alpha^{2m}_p+\beta^{2m}_p+1$ when $m \geq 1$. \newline

   Let $T$ be a number for which \eqref{LprimeLboundsigmasmallspecial} is valid with $T_1$ there replaced by $T \pm t_0$. We may also choose $T$ so that $|T \pm t_0| \geq T/2$.  We apply Perron's formula as in \cite[Corollary 5.3]{MVa1}.  This yields
\begin{align}
\label{Perronprime}
\begin{split}
    &\sum_{n \leq Y}\frac {\chi(n)c(n)\Lambda(n)}{n^{it_0}}=  \frac 1{ 2\pi i}\int\limits_{1+1/\log Y-iT}^{1+1/\log Y+iT}-\frac{L'}{L}(s+it_0, \operatorname{sym}^2 f \otimes \chi) \frac{Y^s}{s} \dif s +R,
\end{split}
\end{align}
  where, as $c(n) \ll 1$ by \eqref{alpha},
\begin{align}
\label{R}
\begin{split}
  R \ll &  \sum_{\substack{Y/2< n <2Y  \\  n \neq Y  }}|\Lambda(n)|\min \left( 1, \frac {Y}{T_1|n-Y|} \right) +\frac {4^{1+1/\log Y}+Y^{1+1/\log Y}}{T_1}\sum_{n \geq 1}\frac {\Lambda(n)}{n^{1+1/\log Y}} \\
\ll & (\log Y)\min \Big(1, \frac {Y}{T\langle Y \rangle}\Big)+\frac {Y}{T}(\log Y)^2,
\end{split}
\end{align}
  where the last estimation above follows from the bound given for $R_1$ on \cite[p. 401]{MVa1} and where $\langle x \rangle$ denotes the distance from $x$ to the nearest prime power, other than $x$ itself. \newline

Choose $K>0$ be chosen so that $|K+ 2n-k_j| \geq 1/10$ for each $k_j, 1 \leq j \leq 3$, and let $\mathcal C$ denote
the contour consisting of the line segments connecting $1+1/\log Y +i T, -K-i T, -K + i T, 1+1/\log Y + i T$. Since $|K+ 2n-k_j| \geq 1/10$ for each $k_j, 1 \leq j \leq 3$, the line segment from $-K -i T$ to $-K + i T$ lies in the region $\mathcal A$ defined in Proposition  \ref{prop:Llogderbound}.  Cauchy’s residue theorem gives, mindful of \eqref{twistedsumoverprimes}--\eqref{R}, that
\begin{align}
\label{Perronprimeshiftcountour}
\begin{split}
    \sum_{p \leq Y}\frac {\chi(p)\lambda_f(p^2)\log p}{p^{it_0}}= \frac {-1}{ 2\pi i} \int\limits_{\mathcal C} & \frac{L'}{L}(s+it_0, \operatorname{sym}^2 f \otimes \chi) \frac{Y^s}{s} \dif s -\sum_{\substack{\rho \\ |\gamma| < T}}  \frac {Y^{\rho-it_0}}{\rho-it_0}+\res_{s=0}\Big(-\frac{L'}{L}(s+it_0, \operatorname{sym}^2 f \otimes \chi) \frac{Y^s}{s}\Big), \\
     & +\sum_{\substack{j \\ 1 \leq n < (K-k_j)/2}} \frac {Y^{-2n-k_j-it_0}}{2n+k_j+it_0}+ O\Big(Y^{1/2}\log Y+(\log Y)\min \Big(1, \frac {Y}{T\langle Y \rangle}\Big)+\frac {Y}{T}(\log Y)^2\Big),
\end{split}
\end{align}
  where the last sum above comes from the residues of $-L'/L(s+it_0, \operatorname{sym}^2 f \otimes \chi) $ at the trivial zeros given in \eqref{trivialzeros}. \newline

   We now apply \eqref{LprimeLboundsigmasmallspecial} and \eqref{LprimeLboundsigmasmaller} to bound the contour integral over $\mathcal C$ as done in the proof of \cite[Theorem 12.10]{MVa1}.  This leads to
\begin{align*}
\begin{split}
     \frac 1{ 2\pi i}\int\limits_{\mathcal C}-\frac{L'}{L}(s+it_0, \operatorname{sym}^2 f \otimes \chi) \frac{Y^s}{s} \dif s \ll \frac {Y}{T}(\log qT)^2+\frac {T\log (qTK)}{Kx^K}.
\end{split}
\end{align*}

  Recall that $\Re(k_j) \geq 0$, for $1 \leq j \leq 3$.  We now take $K \rightarrow \infty$ to deduce from the above and \eqref{Perronprimeshiftcountour} that
\begin{align}
\label{Perronprimeshiftcountoursimplied}
\begin{split}
    \sum_{p \leq Y}\frac {\chi(p)\lambda_f(p^2)\log p}{p^{it_0}}
= -\sum_{\substack{\rho \\ |\gamma| < T}} & \frac {Y^{\rho-it_0}}{\rho-it_0}+\res_{s=0}\Big(-\frac{L'}{L}(s+it_0, \operatorname{sym}^2 f \otimes \chi) \frac{Y^s}{s}\Big) \\
     & +O\Big(Y^{1/2}\log Y+(\log Y)\min \Big(1, \frac {Y}{T\langle Y \rangle}\Big)+\frac {Y}{T}((\log qT)^2+(\log Y)^2)\Big).
\end{split}
\end{align}
Note that from \cite[p. 405]{MVa1}, that asserts, for $Y>1$,
\begin{align*}
\begin{split}
\sum_{\substack{j \\ 1 \leq n < (K-k_j)/2}} \frac {Y^{-2n-k_j-it_0}}{2n+k_j+it_0} \ll \sum_{\substack{n \geq 1}} \frac {Y^{-2n}}{2n}=-\frac 12\log (1-Y^{-2}).
\end{split}
\end{align*}

   Recall that the number of zeros of $L(s, \operatorname{sym}^2 f \otimes \chi)$ in the rectangle $0 \leq \beta \leq 1, T \leq \gamma \leq T + 1$ is $\ll \log (q(T  + 2))$ for any $T>0$. It follows from this that
\begin{align}
\label{sumrho}
\begin{split}
    \sum_{\substack{\rho \\ |\gamma| < T}}  \frac {Y^{\rho-it_0}}{\rho-it_0}\ll Y^{1/2}\sum_{n \leq T+|t_0|}\frac {\log (q(n+2))}{n} \ll Y^{1/2}\log (q(T+|t_0|+ 2)) \log (T+|t_0|+2).
\end{split}
\end{align}

Now \eqref{LprimeLboundsigmasmall} gives, for $-1 \leq \Re s \leq 2$,
\begin{align}
\label{LprimeLboundsigmasmallt0}
\begin{split}
   & \frac{L'}{L}(s+it_0, \operatorname{sym}^2 f \otimes \chi) \\
   =& -\frac {r}{s+it_0}-\frac {r}{s+it_0-1}+\sum_{\substack{k_j \\ |s+it_0+k_j|<1}}\frac 1{s+it_0+k_j}+\sum_{\substack{\rho \\ |\gamma-t| < 1}}\frac 1{s+it_0-\rho}+O(\log (q(|t_0| + 2))).
\end{split}
\end{align}

Now $-L'/L(s+it_0, \operatorname{sym}^2 f \otimes \chi) \frac{Y^s}{s}$ has at most a double pole at $s=0$, whose residue is
\begin{align}
\label{ress0}
\begin{split}
    \res_{s=0}\Big(-\frac{L'}{L}(s+it_0, \operatorname{sym}^2 f \otimes \chi) \frac{Y^s}{s}\Big)\ll (\log Y)\log (q(|t_0| + 2)).
\end{split}
\end{align}

    We conclude from \eqref{Perronprimeshiftcountoursimplied}--\eqref{ress0} that
\begin{align}
\label{sumlambdapsquareest}
\begin{split}
    \sum_{p \leq Y}\frac {\chi(p)\lambda_f(p^2)\log p}{p^{it_0}} \ \ll  Y^{1/2} & \log (q(T+|t_0|+ 2)) \log (T+|t_0|+2)+Y^{1/2}\log Y\log (q(|t_0| + 2)) \\
    & + (\log Y)\min \Big(1, \frac {Y}{T\langle Y \rangle}\Big)+\frac {Y}{T}((\log qT)^2+(\log Y)^2).
\end{split}
\end{align}
    We now set $T$ to be a constant multiple of $Y$ so that the estimation given by \eqref{LprimeLboundsigmasmallspecial} is valid with $T_1$ there replaced by $T \pm t_0$ and that $|T \pm t_0| \geq T/2$. It then follows from this and \eqref{sumlambdapsquareest} that the estimation given in \eqref{PIT3} holds. This completes the proof of the lemma.
\end{proof}

\begin{lemma}
\label{RS3}
  We have, for $x \geq e^{2e}$ and $\alpha \geq 0$,
\begin{align}
\label{mertenstype}
  \sum_{p\leq x} \frac{\cos(\alpha \log p) }{p}=& \log |\zeta(1+1/\log x+i\alpha)| +O(1)
  \leq
\begin{cases}
\log\log x+O(1)            & \text{if }  \alpha\leq 1/\log x \text{ or } \alpha\geq e^x  ,   \\
\log(1/\alpha)+O(1)        & \text{if }  1/\log x\leq \alpha \leq 10,   \\
\log\log\log \alpha + O(1) & \text{if }   10 \leq \alpha \leq e^x.
\end{cases} \\
\label{mertenstypesympower}
  \sum_{p\leq x} \frac{\cos(\alpha \log p)\lambda_f(p^2) }{p}=& \log |L(1+1/\log x+i\alpha, \operatorname{sym}^2 f)| +O(1)
  \leq
\begin{cases}
 O(1)            & \text{if }  \alpha \leq e^e,   \\
\log\log\log \alpha + O(1)        & \text{if }  e^e \leq \alpha \leq e^x,   \\
 \log\log x & \text{if }  \alpha\geq e^x.
\end{cases}
\end{align}
The estimates in the first two cases of \eqref{mertenstype} are unconditional and the third holds under the Riemann hypothesis. The estimates in the first and last case of \eqref{mertenstypesympower} are true unconditionally while the second is subject to GRH.
\end{lemma}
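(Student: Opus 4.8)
The plan is to reduce everything to standard Mertens-type estimates combined with the Euler product expansions of $\zeta(s)$ and $L(s,\operatorname{sym}^2 f)$ near the line $\Re(s)=1$. For the first identity in \eqref{mertenstype}, I would take the logarithm of the Euler product $\zeta(1+1/\log x + i\alpha) = \prod_p(1-p^{-1-1/\log x-i\alpha})^{-1}$, obtaining $\log|\zeta(1+1/\log x+i\alpha)| = \sum_p \Re\bigl(p^{-1-1/\log x-i\alpha}\bigr) + O(1) = \sum_p \frac{\cos(\alpha\log p)}{p^{1+1/\log x}} + O(1)$, the $O(1)$ absorbing the prime-power terms with exponent $\geq 2$ and the $1/\log x$ in the exponent of the $\alpha$-twisted sum, which costs at most a bounded amount since $p^{-1/\log x} = 1 + O((\log p)/\log x)$ and $\sum_{p\leq x}(\log p)/p \ll \log x$ by \eqref{mertenpartialsummation}. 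Splitting the prime sum at $x$, the tail $\sum_{p>x}\cos(\alpha\log p)p^{-1-1/\log x}$ is $O(1)$ by partial summation against $\sum_{p\leq t}1 \ll t/\log t$, so $\sum_{p\leq x}\frac{\cos(\alpha\log p)}{p} = \log|\zeta(1+1/\log x+i\alpha)| + O(1)$.

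For the three-case upper bound on $|\zeta(1+1/\log x+i\alpha)|$, I would invoke standard estimates for $\zeta$ near $\Re s = 1$: when $\alpha \leq 1/\log x$ we are within $O(1/\log x)$ of the pole at $s=1$, giving $|\zeta| \ll \log x$; when $1/\log x \leq \alpha \leq 10$ the classical bound $\zeta(1+\sigma+i\alpha) \ll 1/|\alpha + \sigma|$ (or simply $1/|\alpha|$) applies; when $10 \leq \alpha \leq e^x$ one uses the RH-conditional bound $\log|\zeta(1+i\alpha)| \leq \log\log\log\alpha + O(1)$ (Littlewood), with the shift $1/\log x$ harmless since it is $\leq 1/\log\alpha$; and when $\alpha \geq e^x$ one falls back on the unconditional convexity-type bound $\zeta(1+1/\log x+i\alpha) \ll \log\alpha \ll \text{(something)} $, but more efficiently notes $|\zeta(1+1/\log x+i\alpha)| \ll \log x$ again by truncating the Dirichlet series at $x$ and bounding the smooth tail, matching the first case. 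Taking logs gives exactly the stated cases; the first two are unconditional and only the $10 \leq \alpha \leq e^x$ range uses RH.

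For \eqref{mertenstypesympower} the argument is parallel but uses the Euler product \eqref{Lsymexp}. Logarithmically, $\log|L(1+1/\log x+i\alpha,\operatorname{sym}^2 f)| = \sum_p \Re\bigl((\alpha_p^2+1+\beta_p^2)p^{-1-1/\log x-i\alpha}\bigr) + O(1) = \sum_p \frac{(\lambda_f(p^2)-1+1)\cos(\alpha\log p)}{p^{1+1/\log x}} + O(1)$ — wait, more carefully $\alpha_p^2 + 1 + \beta_p^2 = \lambda_f(p^2)$ by \eqref{sumofsquares} — giving $\sum_p \frac{\lambda_f(p^2)\cos(\alpha\log p)}{p} + O(1)$ after the same truncation argument, where the tail is controlled using $|\lambda_f(p^2)| \leq d(p^2) = 3$ and partial summation. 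For the upper bounds on $|L(1+1/\log x+i\alpha,\operatorname{sym}^2 f)|$: since $L(s,\operatorname{sym}^2 f)$ is entire (Shimura, as noted in the text), there is no pole, so for $\alpha \leq e^e$ it is simply $O(1)$ unconditionally; for $e^e \leq \alpha \leq e^x$ one uses the GRH-conditional estimate $\log|L(1+i\alpha,\operatorname{sym}^2 f)| \leq \log\log\log\alpha + O(1)$, the analogue of Littlewood's bound for this degree-$3$ $L$-function; and for $\alpha \geq e^x$ one truncates the Dirichlet series $\zeta(2s)\sum_n \lambda_f(n^2)n^{-s}$ at $x$ and bounds the remainder to get $\ll \log x$, hence $\log\log x$ after taking logarithms — actually one wants $\log\log x$, which comes from $\log(\text{partial sum of length }x) \ll \log\log x$ using $\sum_{n\leq x}|\lambda_f(n^2)|/n \ll (\log x)^{O(1)}$; this last case is unconditional.

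The main obstacle I anticipate is not any single estimate but getting the ranges and the shift $1/\log x$ to interact cleanly — in particular, verifying that in the regime $\alpha \geq e^x$ the bound genuinely degrades only to $\log\log x$ (from truncating at length $x$) rather than to $\log x$, which requires that $\sum_{n\leq x}\lambda_f(n^2)/n^{1+i\alpha}$ be bounded by a power of $\log x$ rather than by $\log x$ itself; this in turn rests on $\sum_{n \leq x} d(n^2)/n \ll (\log x)^3$ and a trivial triangle-inequality bound. The other delicate point is ensuring that moving from $1 + i\alpha$ to $1 + 1/\log x + i\alpha$ never improves or worsens the bound beyond $O(1)$ in logarithmic terms, which is where one must be slightly careful in the $\alpha \asymp 1/\log x$ transition zone; here the inequality $|\zeta(1+1/\log x + i\alpha)| \ll \min(\log x, 1/|\alpha|) + O(1)$ covers both adjacent cases simultaneously and removes any ambiguity.
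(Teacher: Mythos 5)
Your proposal is correct and follows essentially the same route as the paper: both reduce the prime sums to $\log|\zeta|$ and $\log|L(\cdot,\operatorname{sym}^2 f)|$ on the $1$-line via the Euler product (the paper citing Koukoulopoulos's Lemma~3.2 for this identity), and both obtain the case-by-case bounds from the pole of $\zeta$ at $s=1$, Littlewood-type GRH estimates in the intermediate range, and trivial absolute-value bounds at the extremes. The only real difference is that for $e^e\le\alpha\le e^x$ in \eqref{mertenstypesympower} the paper argues directly on the prime sum, splitting at $(\log\alpha)^{100}$ and invoking its own GRH estimate \eqref{PIT3} plus partial summation, whereas you quote a Littlewood-type bound for the degree-three $L$-function as a black box; these are the same argument, and in both treatments the constant in front of $\log\log\log\alpha$ is really an unspecified $O(1)$ rather than $1$, which is harmless for the downstream applications.
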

\begin{proof}
  The formula in \eqref{mertenstype} is a special case of \cite[Lemma 3.2]{Kou} and the other bounds emerge from \cite[Lemma 2]{Szab}. To establish \eqref{mertenstypesympower}, we apply \cite[Lemma 3.2]{Kou}, \eqref{Lsymexp} and \eqref{sumofsquares} to see that the equality given in \eqref{mertenstypesympower} holds.
  As the function $L(s, \operatorname{sym}^2 f)$ in entire by our discussions in the previous section, we see that the first case of the estimation given in \eqref{mertenstypesympower} follows from this. Also, by \eqref{alpha} and \eqref{sumofsquares},
\begin{align}
\label{lambdasquarebound1}
\begin{split}
 |\lambda_f(p^2)|=|\lambda^2_f(p)-1| \leq |\lambda^2_f(p)|+1 \leq d(p)^2+1 =5.
\end{split}
\end{align}
   It follows from this and \eqref{merten} that the third case in \eqref{mertenstypesympower} holds. To establish the second case of \eqref{mertenstypesympower}, first note that by \eqref{lambdasquarebound1} and \eqref{merten},  
\begin{align*}
\begin{split}
 \sum_{p\leq x} \frac{\cos(\alpha \log p)\lambda_f(p^2) }{p}=& \Re\Big( \sum_{p\leq x} \frac {\lambda_f(p^2)}{p^{1+i\alpha}} \Big)  \ll \sum_{p\leq (\log \alpha)^{100}} \frac {1}{p}+\Big| \sum_{ (\log \alpha)^{100}< p\leq x} \frac {\lambda_f(p^2)}{p^{1+i\alpha}} \Big| \\
 \ll & \log \log \log \alpha+\Big |\sum_{ (\log \alpha)^{100}< p\leq x} \frac {\lambda_f(p^2)}{p^{1+i\alpha}} \Big |.
\end{split}
\end{align*}
   We now apply \eqref{PIT3} by taking $\chi$ there to be the principal Dirichlet character modulo $1$ and partial summation.  So under GRH, the last sum above is $\ll \log \log \log \alpha$ for $\alpha \geq e^e$. This now implies the second case of \eqref{mertenstypesympower} and hence completes the proof of the lemma.
\end{proof}

\subsection{Smoothed character sums}

  We now define for any integer $d$,
\begin{align}
\label{Addef}
\begin{split}
  A(d)=\prod_{\substack{p|d }}\Big(1-\frac {1}{p}\Big).
\end{split}
\end{align}
As per convention, the empty product is $1$ and the empty sum $0$.  We observe that $A(d)>0$ for any integer $d$. \newline

  We write $\square$ for a perfect square of rational integers.  The next result is for smoothed quadratic character sums and can derived by a straightforward modification of the proof of \cite[Lemma 2.4]{G&Zhao24-06}.
\begin{lemma}
\label{prsum}
Suppose that $\Phi$ is a non-negative smooth function with compact support in the positive real numbers.  With the notation as above and $k\in \rear$ with $k \geq 0$, for any positive even integer $n$,
\begin{align*}
\sumstar\limits_{(d,2)=1}A^{-k}(d)\chi^{(8d)}(n)\Phi \Big( \frac{d}{X} \Big)=0.
\end{align*}
If $n$ is an odd positive integer, then
\begin{align*}
\begin{split}
\sumstar_{(d,2)=1}  A(d)^{-k} & \chi^{(8d)}(n) \Phi \Big( \frac{d}{X} \Big) \\
=& \delta_{n=\square}{\widehat\Phi}(1) \frac{X}{2} \prod_{p|n}\Big(1+\frac{A(p)^{-k}}{p} \Big)^{-1} \prod_{(p,2)=1}\Big(1-\frac{1}{p}\Big) \Big(1+\frac{A(p)^{-k}}{p} \Big) +
O_k(X^{1/2+\varepsilon}n^{1/4+\varepsilon}),
\end{split}
\end{align*}
  where $\delta_{n=\square}=1$ if $n$ is a square and $\delta_{n=\square}=0$ otherwise.
\end{lemma}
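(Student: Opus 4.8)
The plan is to follow the proof of \cite[Lemma 2.4]{G&Zhao24-06}, the only new feature being the general exponent $k\ge 0$ in the weight $A(d)^{-k}$; this exponent will enter solely through the local factors of an auxiliary Euler product, where it does no harm.

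The case of even $n$ is immediate: since $d$ is odd, $8d$ is even, so $\gcd(8d,n)\ge 2$ whenever $n$ is even, and hence the Kronecker symbol $\chi^{(8d)}(n)=\bfrac{8d}{n}$ vanishes. The sum therefore vanishes term by term.

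Now let $n$ be odd. Write $\widehat\Phi(s)=\int_0^\infty\Phi(x)x^{s-1}\dif x$ for the Mellin transform of $\Phi$, an entire function decaying faster than any fixed power of $|\Im s|$ on vertical strips, with $\widehat\Phi(1)=\int_0^\infty\Phi(x)\dif x$. Mellin inversion gives
\begin{align*}
\sumstar_{(d,2)=1}A(d)^{-k}\chi^{(8d)}(n)\Phi\Big(\frac dX\Big)=\frac1{2\pi i}\int\limits_{(2)}\widehat\Phi(s)X^s D(s;n)\,\dif s,\qquad D(s;n):=\sumstar_{(d,2)=1}\frac{A(d)^{-k}\chi^{(8d)}(n)}{d^s},
\end{align*}
the Dirichlet series converging absolutely for $\Re s>1$ as $A(d)^{-k}=(d/\varphi(d))^k\ll_\varepsilon d^\varepsilon$ on squarefree $d$. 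Using complete multiplicativity of the Kronecker symbol in its top entry together with $\bfrac 8n=\bfrac 2n$ for odd $n$, we have $\chi^{(8d)}(n)=\bfrac 2n\psi_n(d)$, where $\psi_n:=\bfrac{\cdot}{n}$ is a real Dirichlet character modulo $n$ that is principal precisely when $n$ is a perfect square. Since $A(d)^{-k}\psi_n(d)$ is multiplicative and supported on odd squarefree $d$, expanding into an Euler product yields
\begin{align*}
D(s;n)=\Big(\frac 2n\Big)\prod_{p\ \mathrm{odd}}\Big(1+\frac{A(p)^{-k}\psi_n(p)}{p^s}\Big)=\Big(\frac 2n\Big)L(s,\psi_n)\big(1-\psi_n(2)2^{-s}\big)Q(s;n),
\end{align*}
where $Q(s;n):=\prod_{p\ \mathrm{odd}}\big(1+A(p)^{-k}\psi_n(p)p^{-s}\big)\big(1-\psi_n(p)p^{-s}\big)$. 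Since $A(p)^{-k}-1=k/p+O_k(p^{-2})$, the $p$-th factor of $Q$ equals $1+O_k(p^{-1-\Re s})+O(p^{-2\Re s})$, so $Q(s;n)$ converges absolutely and is $O_k(1)$ for $\Re s\ge\tfrac12+\varepsilon$, uniformly in $n$.

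It remains to shift the contour from $\Re s=2$ to $\Re s=\tfrac12+\varepsilon$. In this strip the integrand is holomorphic save for a possible simple pole at $s=1$, present exactly when $L(s,\psi_n)$ has one, i.e.\ when $\psi_n$ is principal, i.e.\ when $n=\square$; the horizontal segments vanish as the height tends to infinity, because $\widehat\Phi$ decays super-polynomially while $D(s;n)$ grows at most polynomially in $|\Im s|$ (by convexity for $L(s,\psi_n)$ and boundedness of $Q$). When $n=\square$ we have $\bfrac 2n=1$, $\psi_n(p)=1$ for $p\nmid n$, and $\res_{s=1}L(s,\psi_n)=\prod_{p\mid n}(1-1/p)$, so the residue of $\widehat\Phi(s)X^sD(s;n)$ at $s=1$ equals
\begin{align*}
\widehat\Phi(1)\,X\cdot\frac12\prod_{p\mid n}\Big(1-\frac1p\Big)\prod_{\substack{p\ \mathrm{odd}\\ p\nmid n}}\Big(1+\frac{A(p)^{-k}}{p}\Big)\Big(1-\frac1p\Big);
\end{align*}
regrouping the Euler factors at the primes dividing $n$ shows this is exactly $\widehat\Phi(1)\tfrac X2\prod_{p\mid n}(1+A(p)^{-k}/p)^{-1}\prod_{(p,2)=1}(1-1/p)(1+A(p)^{-k}/p)$, the claimed main term. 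Finally, on the line $\Re s=\tfrac12+\varepsilon$ the convexity bound $L(\tfrac12+\varepsilon+it,\psi_n)\ll_\varepsilon\big(n(1+|t|)\big)^{1/4+\varepsilon}$, together with $Q(\tfrac12+\varepsilon+it;n)\ll_k 1$ and the rapid decay of $\widehat\Phi$, bounds the shifted integral by $O_k(X^{1/2+\varepsilon}n^{1/4+\varepsilon})$, which is the stated error term.

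\textbf{Main obstacle.} The genuine work is the bookkeeping: checking that the residue at $s=1$ collapses to precisely the advertised Euler product (correctly matching the factors at $p\mid n$, at $p=2$, and at $p\nmid 2n$), and verifying that the estimates for $Q(s;n)$ and for $L(s,\psi_n)$ are uniform in $n$ so that the error term assumes the clean shape $O_k(X^{1/2+\varepsilon}n^{1/4+\varepsilon})$. Everything else is routine contour-shifting.
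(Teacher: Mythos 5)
Your argument is correct and is essentially the standard derivation that the paper defers to (a Mellin transform of $\Phi$, the Euler-product factorization $D(s;n)=\ptfrac{2}{n}L(s,\psi_n)(1-\psi_n(2)2^{-s})Q(s;n)$ with $Q$ absolutely convergent for $\Re s>\tfrac12$, a contour shift picking up the pole at $s=1$ exactly when $n=\square$, and convexity on $\Re s=\tfrac12+\varepsilon$ giving the $X^{1/2+\varepsilon}n^{1/4+\varepsilon}$ error); the residue bookkeeping you carry out does reproduce the stated Euler product. The only point worth making explicit is that for non-squarefree $n$ the character $\psi_n=\ptfrac{\cdot}{n}$ may be imprimitive, so one applies convexity to the inducing primitive character and absorbs the finitely many correction factors $\prod_{p\mid n}(1-\psi^*(p)p^{-s})\ll 2^{\omega(n)}\ll n^{\varepsilon}$ into the error term.
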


\subsection{Upper bound for $\log |L(1/2, f \otimes \chi)|$ }

  Our next result provides an upper bound of $\log |L(\frac 12,f \otimes \chi)|$ in terms of a sum involving prime powers.
\begin{prop}
\label{lem: logLbound}
Suppose that $\chi$ is a primitive Dirichlet character modulo $q$ and assume the truth of GRH for $L(s,f \otimes \chi)$. Let $x \geq 2$ and let $\lambda_0=0.4912\ldots$
denote the unique positive real number satisfying $e^{-\lambda_0} = \lambda_0+\frac {\lam^2_0}{2}$.  Then, for any $\lambda \geq \lam_0$,
\begin{align}
\label{logLupperbound}
\log |L(\tfrac 12+it,f \otimes \chi)| \le  \Re \sum_{\substack {p^l \leq x \\  l \geq 1}} \frac {\chi(p^{l})(\alpha^{l}_p+\beta^l_p)}{lp^{l(1/2+it+ \lambda/\log x)}} \frac{\log \leg {x}{p^l}}{\log x}
 + (1+\lam)(\log q+\log (|t|+2))+ O\Big( \frac{\lambda}{\log x}+1\Big).
\end{align}
\end{prop}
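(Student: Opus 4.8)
The plan is to follow the classical Soundararajan--Harper strategy for proving upper bounds of $\log|L(1/2+it, f\otimes\chi)|$ under GRH, adapted to the modular $L$-function setting. The starting point is an explicit-formula-type identity. Applying the Hadamard factorization / explicit formula for $\Lambda(s, f\otimes\chi)$ (which is entire with a functional equation of the form \eqref{equ:FE}, so the standard machinery of \cite[Chapter 5]{MVa1} or \cite[Chapter 12]{MVa1} applies), one obtains a formula expressing $\frac{L'}{L}(s, f\otimes\chi)$ as a sum over zeros plus archimedean contributions. Integrating this against a suitable smooth kernel — concretely, integrating $-\frac{L'}{L}(s+1/2+it, f\otimes\chi)$ times $\frac{x^s - x^{s\log(x/p^l)/\log x}}{s^2}$-type weights, or more directly using the Riemann--von Mangoldt-type formula with the weight $\frac{\log(x/n)}{\log x}$ — yields an expression of the shape
\[
\log|L(\tfrac12+it, f\otimes\chi)| = \Re\sum_{p^l\le x}\frac{\chi(p^l)(\alpha_p^l+\beta_p^l)}{l\, p^{l(1/2+it)}}\frac{\log(x/p^l)}{\log x} - \sum_{\rho}(\cdots) + (\text{archimedean terms}),
\]
where the sum over nontrivial zeros $\rho = 1/2 + i\gamma$ contributes, under GRH, a term that is \emph{non-positive} up to controlled error after one shifts the evaluation point slightly to the right by $\lambda/\log x$. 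The coefficient $\alpha_p^l+\beta_p^l$ here is exactly the coefficient of $p^{-ls}$ in $-\frac{L'}{L}(s,f\otimes\chi)\cdot(\text{something})$; note $\alpha_p + \beta_p = \lambda_f(p)$ and in general these are bounded by $2$ thanks to \eqref{alpha}.

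The key mechanism producing the shift parameter $\lambda$ and the constant $\lambda_0 = 0.4912\ldots$ is Soundararajan's convexity trick for handling the zero-sum. One considers, for the shifted point $s_0 = 1/2 + it + \lambda/\log x$, the sum $\Re\sum_\rho \frac{1}{s_0 - \rho}$; under GRH each term has the form $\frac{\lambda/\log x}{(\lambda/\log x)^2 + (\gamma-t)^2} > 0$, so $-\Re\sum_\rho\frac{1}{s_0-\rho} < 0$, giving the desired sign. The precise numerology — that $\lambda_0$ solves $e^{-\lambda_0} = \lambda_0 + \lambda_0^2/2$ — comes from optimizing the trade-off in the inequality $\log|L(1/2+it)| \le \log|L(s_0)| + (\text{zero terms})$, specifically from bounding the difference $\log|L(1/2+it, f\otimes\chi)| - \log|L(s_0, f\otimes\chi)|$ via $\int$ of $\frac{L'}{L}$ along the horizontal segment and using that $\Re\frac{L'}{L}$ is controlled by the same positive zero-sum; the function $\lambda \mapsto e^{-\lambda}$ appears as the weight decay and $\lambda + \lambda^2/2$ from the Taylor expansion of the relevant exponential integrals. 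This is the same computation as in \cite[Section 5]{Sound2009} and \cite[Lemma 3]{Szab}, transplanted verbatim except that the "number of zeros near height $t$" bound $\ll \log(q(|t|+2))$ and the archimedean gamma-factor contribution $\ll \log q + \log(|t|+2)$ are read off from the functional equation \eqref{equ:FE} (conductor $q^2$, so the "analytic conductor" is of size $q^2(|t|+2)^{O(1)}$, giving the factor $(1+\lambda)(\log q + \log(|t|+2))$ with the correct constant $1$ in front of each of $\log q$ and $\log(|t|+2)$ after accounting for the square of the conductor being cancelled by the degree-$2$ normalization).

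I expect the main obstacle to be purely bookkeeping: getting the constant in front of $(\log q + \log(|t|+2))$ to come out as exactly $(1+\lambda)$ rather than $2(1+\lambda)$ or similar. This requires care with how the conductor $q^2$ in \eqref{equ:FE} interacts with the degree-$2$ Dirichlet series — the $L$-function $L(s,f\otimes\chi)$ has conductor $q^2$ but its "$\log$-conductor per unit length" relevant to the zero-counting is $\log q^2 = 2\log q$ spread over a degree-$2$ gamma factor, and the Soundararajan bound produces a factor $\tfrac12 \cdot (\text{total log-conductor})$, yielding $\log q$; the $\lambda$-dependent correction then multiplies this. One also must verify the error term $O(\lambda/\log x + 1)$ absorbs the trivial-zero contributions, the error in Perron/explicit-formula truncation, and the ramified primes $p\mid q$ (finitely many Euler factors, contributing $O(1)$); none of these is serious. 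The rest — choosing the test function, justifying contour shifts, invoking GRH for the sign — is standard and follows \cite{Sound2009}, \cite{Harper}, \cite{Szab} line by line.
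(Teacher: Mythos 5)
Your proposal follows essentially the same route as the paper: the Hadamard factorization plus the functional equation \eqref{equ:FE} express $-\Re\frac{L'}{L}$ as $\log q+\log(|t|+2)$ minus the nonnegative zero-sum $F(s)=\Re\sum_\rho(s-\rho)^{-1}$, the $x^w/w^2$ kernel produces the weighted prime sum, integrating along the horizontal segment from $\tfrac12$ to $\sigma_0=\tfrac12+\lambda/\log x$ transfers the bound to the central point, and the condition $e^{-\lambda}\le\lambda+\lambda^2/2$ is exactly what makes the combined coefficient of $F(\sigma_0+it)$ nonpositive so the zero contribution can be discarded. Since this is Soundararajan's argument transplanted to $L(s,f\otimes\chi)$ precisely as the paper does it, no further comparison is needed.
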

\begin{proof}
As usual, we write $s=\sigma+it$ and interpret $\log |L(s,f \otimes \chi)|$ as $-\infty$ when $L(s,f \otimes \chi)=0$. We may
thus assume that $L(s,f \otimes \chi)) \neq 0$ in the rest of the proof.  Recall that the function $\Lambda(s,f \otimes \chi)$
defined in \eqref{equ:FE} is entire.  As $\Gamma(s)$ has simple poles at the non-positive rational integers
(see \cite[\S 10]{Da}), we see from the expression of $\Lambda(s,f \otimes \chi)$ in \eqref{equ:FE} that $L(s,f \otimes \chi)$ has simple zeros at $s=-k-\tfrac{\kappa -1}{2}$ for all $k \geq 0$, and we call these the trivial zeros of $L(s,f \otimes \chi)$.  The non-trivial zeros of $L(s,f \otimes \chi)$ are precisely the zeros of $\Lambda(s,f \otimes \chi)$. \newline

  Let $\rho=\tfrac 12+i\gamma$ run over the non-trivial zeros of $L(s,f \otimes \chi)$.  Hence GRH implies that $\gamma \in \rear$.  We then deduce from \cite[Theorem 5.6]{iwakow}
and the observation that $L(s,f \otimes \chi)$ is analytic at $s=1$ that
\begin{align}
\label{Lproductzeros}
  \Lambda(s,f \otimes \chi)=e^{B_0 + B_1s}\prod_{\rho}\left(1 - \frac{s}{\rho}\right)e^{\frac{s}{\rho}},
\end{align}
where $B_0$, $B_1=B_1(q)$ are constants. \newline

    Taking the logarithmic derivative on both sides of \eqref{Lproductzeros}, utilizing \eqref{equ:FE} and taking the real parts, we obtain
\begin{align}
\label{Lderivreal}
-\Re \frac{L'}{L}(s, f \otimes \chi)=\log \left(\frac{q}{2\pi} \right)+\Re \frac{\Gamma'}{\Gamma} \left(s + \frac{\kappa -1}{2} \right)
- \Re B_1 - \sum_{\rho}\Re \left(\frac{1}{s-\rho} + \frac{1}{\rho}\right).
\end{align}

    On the other hand, we note that by \cite[(5.29)]{iwakow},
\begin{equation*}
\Re(B_1)=-\sum_\rho\Re(\rho^{-1}).
\end{equation*}
So \eqref{Lderivreal} simplifies to
\begin{align}
\label{LprimeLbound}
-\Re{\frac{L'}{L}(s, f \otimes \chi)} = \log q+\Re \frac{\Gamma'}{\Gamma} \left( s + \frac{\kappa -1}{2} \right)-F(s) +O(1),
\end{align}
  where
\begin{align*}
 F(s) =: \Re{\sum_{\rho} \frac{1}{s-\rho}}= \sum_{\rho} \frac{\sigma-1/2}{(\sigma-1/2)^2+
 (t-\gamma)^2}.
\end{align*}

   Applying Stirling's formula $\frac {\Gamma'}{\Gamma} (s)=\log s+O(|s|^{-1})$ by (6) of \cite[\S 10]{Da}, we deduce from \eqref{LprimeLbound} that, for $s=\sigma+it$ with $\sigma$, $t \in \mr$ and $|\sigma|$ is bounded,
\begin{align}
\label{LprimeLbound1}
\begin{split}
-\Re{\frac{L'}{L}(\sigma+it, f \otimes \chi)}=& \log q+\log (|t|+2) - F(\sigma+it)+O(1).
\end{split}
\end{align}

  Integrating the last expression given for $-\Re \frac {L'}{L}(s,f \otimes \chi)$ in \eqref{LprimeLbound} from $\sigma=1/2$ to
$\sigma=\sigma_0 > \frac 12$, we get that
\begin{align}
\label{Lprimediffbound}
\begin{split}
  \log |L(\tfrac 12+it,  f \otimes \chi)| - \log & |L(\sigma_0+it,  f \otimes \chi)| = (\sigma_0-\half) \Big(\log q+\log (|t|+2)+O(1)\Big)  -\int\limits_{\half}^{\sigma_0} F(\sigma+it) \dif \sigma \\
 =&  (\sigma_0-\half) \Big (\log q+\log (|t|+2)  +O(1) \Big)-\half \sum_{\rho} \log \frac {(\sigma_0-\half)^2+(t-\gamma)^2}{(t-\gamma)^2} \\
 \leq &  (\sigma_0-\half) \Big (\log q+\log (|t|+2)-\half F(\sigma_0+it) +O(1)  \Big),
\end{split}
\end{align}
  where the inequality above follows from the observation that $\log (1+x^2) \geq x^2/(1+x^2)$. \newline

  Next, from \eqref{Lphichi}, we have, for $\Re(s)>1$,
  \begin{align*}
\begin{split}
 -\frac{L^{\prime}}{L}(s, f \otimes \chi)
 =& \sum_{\substack {p^l, \ l \geq 1}} \frac {\log p \cdot \chi(p^{l})(\alpha^{l}_p+\beta^l_p)}{p^{ls}}.
\end{split}
\end{align*}

Upon integrating term by term using the Dirichlet series expansion of  $-L^{\prime}/L(s+w, f \otimes \chi)$ that
$$
 \frac{1}{2\pi i} \int\limits_{(c)} -\frac{L^{\prime}}{L}(s+w, f \otimes \chi)
 \frac{x^w}{w^2} \dif w  = \sum_{\substack {p^l \leq x \\  l \geq 1}} \frac {\log p  \cdot  \chi(p^{l})(\alpha^{l}_p+\beta^l_p)}{p^{ls}}
\log \leg {x}{p^l},
 $$
   where $c>2$ is a large real number. Now moving the line of integration in the above expression to the left and collecting residues, we see also
that
\begin{align*}
\frac{1}{2\pi i} \int\limits_{(c)} -\frac{L^{\prime}}{L}(s+w, f \otimes \chi))
 \frac{x^w}{w^2} \dif w
 = -\frac{L^{\prime}}{L}(s, f \otimes \chi) \log x - \Big(\frac{L^{\prime}}{L}(s, f \otimes \chi)\Big)^{\prime}
 -\sum_{\rho} \frac{x^{\rho-s}}{(\rho-s)^2} -\sum_{k=0}^{\infty}\frac{x^{-k-(\kappa-1)/2-s}}{(k+(\kappa-1)/2+s)^2}.
 \end{align*}

  Comparing the above two expressions, we deduce that unconditionally, for any $x \ge 2$,
 \begin{align}
\label{Lprimeseries}
\begin{split}
 -\frac{L^{\prime}}{L}(s, f \otimes \chi)= \sum_{\substack {p^l \leq x \\  l \geq 1}} \frac {\log p  \cdot  \chi(p^{l})(\alpha^{l}_p+\beta^l_p)}{p^{ls}} &
\log \leg {x}{p^l} + \frac{1}{\log x} \Big(\frac{L^{\prime}}{L}(s, f \otimes \chi)\Big)^{\prime}
 + \frac{1}{\log x} \sum_{\rho} \frac{x^{\rho-s}}{(\rho-s)^2} \\
& + \frac{1}{\log x} \sum_{k=0}^{\infty}\frac{x^{-k-(\kappa-1)/2-s}}{(k+(\kappa-1)/2+s)^2}.
\end{split}
 \end{align}

 We integrate the real parts on both sides of \eqref{Lprimeseries} from $\sigma_0$ to $\infty$ to see that for $x
\geq 2$,
 \begin{align}
\label{logL}
\begin{split}
\log |L(\sigma_0+it, f \otimes \chi)| = \Re \Big( \sum_{\substack {p^l \leq x \\  l \geq 1}} \frac { \chi(p^{l})(\alpha^{l}_p+\beta^l_p)}{lp^{l(\sigma_0+it)}}
  &- \frac{1}{\log x} \frac{L^{\prime}}{L}(\sigma_0+it, f \otimes \chi)\\
  & + \frac{1}{\log x} \sum_{\rho}
 \int\limits_{\sigma_0}^{\infty} \frac{x^{\rho-s}}{(\rho-s)^2} \dif \sigma +O\Big(\frac{1}{\log x}\Big)\Big).
\end{split}
\end{align}
 Observe that
$$
\sum_{\rho}\Big|\int\limits_{\sigma_0}^{\infty} \frac{x^{\rho -s}}{(\rho -s)^2} \dif \sigma\Big|
\le \sum_{\rho}\int\limits_{\sigma_0}^{\infty}\frac{ x^{1/2-\sigma}}{|\sigma_0-\rho|^2} \dif \sigma
= \sum_{\rho}\frac{x^{1/2-\sigma_0}}{|\sigma_0+it-\rho|^2 \log x}= \frac{x^{1/2-\sigma_0}F(\sigma_0+it)}{(\sigma_0-1/2)\log x}.
$$
 Applying this and \eqref{LprimeLbound1} in \eqref{logL} renders
 \begin{align}
\label{logLbound}
\begin{split}
 \log |L(\sigma_0+it, f \otimes \chi)|
 &\le  \Re \Big(  \sum_{\substack {p^l \leq x \\  l \geq 1}} \frac {\chi(p^{l})(\alpha^{l}_p+\beta^l_p)}{lp^{l(\sigma_0+it)}} \frac{\log
\leg {x}{p^l}}{\log x}
 +  \frac {\log q+\log (|t|+2)}{ \log x}  \\
& \hskip .5 in+F(\sigma_0+it) \Big( \frac{x^{1/2-\sigma_0}}{(\sigma_0-1/2) \log^2 x} -\frac{1}{\log x}
 \Big)
 + O\Big(\frac{1}{\log x}+1\Big)\Big).
\end{split}
 \end{align}

Adding \eqref{Lprimediffbound} and \eqref{logLbound}, we deduce that
\begin{align*}
  \log |L(\tfrac 12+it, f \otimes \chi)|
 \le &  \Re  \sum_{\substack {p^l \leq x \\  l \geq 1}} \frac {\chi(p^{l})(\alpha^{l}_p+\beta^l_p)}{lp^{l(\sigma_0+it)}} \frac{\log
\leg {x}{p^l}}{\log x}
 +  (\log q+\log (|t|+2) ) \Big(\sigma_0 -\frac{1}{2} + \frac{1}{\log x}+1 \Big )
 \\
 &\hskip .5 in +F(\sigma_0+it) \Big( \frac{x^{1/2-\sigma_0}}{(\sigma_0-1/2) \log^2 x} -\frac{1}{\log x}-\frac{1}{2} \Big( \sigma_0-\frac{1}{2} \Big)
 \Big) + O\Big(\frac{1}{\log x} +\sigma_0-\frac 12 \Big).
 \end{align*}
  The assertion of the proposition now follows by setting $\sigma_0 = 1/2+ \lambda/\log x$ for $\lambda \geq \lambda_0$.  Thanks to its negativity, we can discard the term involving $F(\sigma_0+it)$ in the above inequality.
 \end{proof}

  Our next result provides a simplification of the above proposition.
\begin{corollary}
\label{lem: logLboundsimplified}
 With the notation as above, let $\chi$ be a primitive Dirichlet character modulo $q$ and assume the truth of GRH for $L(s,f \otimes \chi)$. We have for $|t| \leq q^A$ with $A>0$ being a fixed constant and any $2 \leq x \leq q$,
\begin{align}
\label{logLupperboundgen}
\begin{split}
 \log |L(\tfrac12+it, f \otimes \chi)|  \leq &  \Re\sum_{\substack{  p \leq x }} \frac{\chi (p)\lambda_f(p)}{p^{1/2+it+1/\log x}}
 \frac{\log (x/p)}{\log x}- \frac 12\Re \sum_{\substack{  p \leq x^{1/2}   }} \frac{\chi (p^2)(\lambda_f(p^2)-1)}{p^{1+2it}}
 +\frac{2(A+1)\log q}{\log x} \\
 & + O\Big( \frac {\log q}{x^{1/2}\log x}+1 \Big).
\end{split}
\end{align}
  Further, for any non-quadratic character $\chi$,
\begin{align}
\label{logLupperboundnonquad}
\begin{split}
\log |L(\tfrac12+it, f \otimes \chi)| \leq &  \Re\sum_{\substack{  p \leq x }} \frac{\chi (p)\lambda_f(p)}{p^{1/2+it+1/\log x}}
 \frac{\log (x/p)}{\log x}-\frac 12\Re \sum_{\substack{  p \leq \min (\log q, x^{1/2})   }} \frac{\chi (p^2)(\lambda_f(p^2)-1)}{p^{1+2it}}
 +\frac{2(A+1)\log q}{\log x} \\
 &+ O\Big( \frac {\log q}{x^{1/2}\log x}+1 \Big).
\end{split}
\end{align}
\end{corollary}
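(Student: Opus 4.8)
The corollary is a tidying-up of Proposition~\ref{lem: logLbound}, so the plan is to specialize that proposition and then clean up the resulting prime-power sum. First I would apply Proposition~\ref{lem: logLbound} with $\lambda=1$, which is admissible since $1>\lambda_0=0.4912\ldots$. Using the hypothesis $|t|\le q^A$ one has $\log(|t|+2)\le A\log q+O(1)$, so the secondary term of \eqref{logLupperbound} is bounded by $\tfrac{2(A+1)\log q}{\log x}+O(1)$, exactly the shape occurring in \eqref{logLupperboundgen}, while the proposition's error $O(\lambda/\log x+1)$ is $O(1)$. It then remains to massage the prime-power sum $\Re\sum_{p^l\le x,\,l\ge1}\tfrac{\chi(p^l)(\alpha_p^l+\beta_p^l)}{l\,p^{l(1/2+it+1/\log x)}}\tfrac{\log(x/p^l)}{\log x}$.

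I would split this sum according to $l=1$, $l=2$, and $l\ge3$. For $l=1$, the relation $\alpha_p+\beta_p=\lambda_f(p)$ from \eqref{sumofsquares} shows the contribution is precisely the first sum of \eqref{logLupperboundgen}, which is kept verbatim. For $l\ge3$, since $|\alpha_p|=|\beta_p|=1$ by \eqref{alpha} while the mollifier weight and the shift $p^{-l/\log x}$ both have modulus at most $1$, the total is $\ll\sum_p\sum_{l\ge3}\tfrac{2}{l\,p^{l/2}}\ll\sum_p p^{-3/2}=O(1)$. For $l=2$, using $\alpha_p^2+\beta_p^2=\lambda_f(p^2)-1$ (again \eqref{sumofsquares}), the contribution is $\tfrac12\Re\sum_{p\le x^{1/2}}\tfrac{\chi(p^2)(\lambda_f(p^2)-1)}{p^{1+2it+2/\log x}}\tfrac{\log(x/p^2)}{\log x}$; I would then replace $p^{-2/\log x}$ and $\log(x/p^2)/\log x$ each by $1$. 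On the range $p\le x^{1/2}$ one has $\log p\le\tfrac12\log x$, hence both deviations are $\ll\log p/\log x$, and since $|\lambda_f(p^2)|\le5$ (see \eqref{lambdasquarebound1}) and $\sum_{p\le x^{1/2}}\tfrac{\log p}{p}\ll\log x$ by \eqref{mertenpartialsummation}, each replacement costs only $O(1)$. Collecting the three ranges yields \eqref{logLupperboundgen}.

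For the refinement \eqref{logLupperboundnonquad} I would start from \eqref{logLupperboundgen} and split its $l=2$ sum at $\min(\log q,x^{1/2})$; nothing is needed when $x^{1/2}\le\log q$, so assume $\log q<p\le x^{1/2}$. Since $\chi$ is non-quadratic, $\chi^2$ is non-principal; writing $\psi$ for the primitive character inducing it (of conductor dividing $q$), and noting that primes $p\mid q$ contribute nothing because $\chi(p)=0$, I would bound $\sum_{\log q<p\le x^{1/2}}\tfrac{\chi(p^2)(\lambda_f(p^2)-1)}{p^{1+2it}}$ by partial summation from the GRH-conditional estimates of Lemma~\ref{RS1}, handling the $\lambda_f(p^2)$ part by \eqref{PIT3} and the constant part by \eqref{PIT} (both applied to $\psi$). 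The boundary term of the partial summation near $p\asymp x^{1/2}$, together with the $O(\log q)$ correction incurred in passing from $\chi^2$ to $\psi$, accounts for the error $O\bigl(\tfrac{\log q}{x^{1/2}\log x}+1\bigr)$, giving \eqref{logLupperboundnonquad}.

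The $l=1$ extraction and the $l\ge3$ tail are routine; the step demanding the most care is the $l=2$ term. One must verify that stripping off the mollifier weight and the $1/\log x$-shift costs only $O(1)$ — this is exactly where the Deligne bound $|\lambda_f(p^2)|\le5$ and the Mertens estimate \eqref{mertenpartialsummation} are used — and, for \eqref{logLupperboundnonquad}, one must pass correctly from $\chi^2$ to the primitive character it induces so as to legitimately invoke Lemma~\ref{RS1} and keep the truncation error within the claimed bound.
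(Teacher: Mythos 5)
Your derivation of \eqref{logLupperboundgen} follows the paper's route exactly: set $\lambda=1$ in Proposition \ref{lem: logLbound}, keep the $l=1$ terms via $\alpha_p+\beta_p=\lambda_f(p)$, discard $l\ge 3$ trivially, and strip the weight $\log(x/p^2)/\log x$ and the shift $p^{-2/\log x}$ from the $l=2$ sum at cost $O(1)$ using $|\lambda_f(p^2)-1|\le 6$ and \eqref{mertenpartialsummation}. That part is correct. (You also rightly read the secondary term of \eqref{logLupperbound} as $(1+\lambda)\frac{\log q+\log(|t|+2)}{\log x}$, which is what the proposition's proof actually yields.) Your treatment of the imprimitivity of $\chi^2$ is in fact more careful than the paper's, which asserts that $\chi^2$ is primitive modulo $q$; passing to the inducing primitive character and discarding the $O(1)$ contribution of primes dividing $q$ is the right repair.

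The gap is in the partial summation for \eqref{logLupperboundnonquad}. You propose to bound $\sum_{\log q<p\le x^{1/2}}\chi(p^2)(\lambda_f(p^2)-1)p^{-1-2it}$ by partial summation directly from Lemma \ref{RS1} over the whole range. But the GRH bound $S(u):=\sum_{p\le u}\psi(p)\lambda_f(p^2)p^{-2it}\log p\ll \sqrt{u}\,(\log(2q(u+|t_0|)))^2\ll\sqrt{u}(\log q)^2$ is \emph{weaker than the trivial bound} $S(u)\ll u$ throughout $u\le(\log q)^4$; the lower boundary term of the partial summation at $u=\log q$ is therefore of size $\frac{\sqrt{\log q}\,(\log q)^2}{\log q\cdot\log\log q}=\frac{(\log q)^{3/2}}{\log\log q}$, which is nowhere near $O(1)$ (and the upper boundary term $\frac{(\log q)^2}{x^{1/4}\log x}$ is likewise not controlled when $x$ is only slightly larger than $(\log q)^2$). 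The missing step is an intermediate truncation: split the range at $(\log q)^{6}$ (any fixed power exceeding $4$ works). On $\log q<p\le(\log q)^{6}$ use the unconditional Mertens estimate \eqref{merten} together with $|\lambda_f(p^2)-1|\le6$, which gives $\ll\log\log((\log q)^6)-\log\log\log q+O(1)=O(1)$; only on $(\log q)^{6}<p\le x^{1/2}$ is the GRH partial summation applied, where both boundary terms and the integral term are then genuinely $O(1)$. This is exactly the paper's \eqref{sumplarge}--\eqref{sumpsmall}, and without it the argument for \eqref{logLupperboundnonquad} does not close.
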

\begin{proof}
  We set $\lambda=1$ in \eqref{logLupperbound} and note that the contribution of the terms with $l \geq 3$ is $O(1)$. Moreover, using the relations
 given in \eqref{sumofsquares}, we see that
\begin{align}
\begin{split}
\label{logLupperbound1}
 \log & |L(\tfrac12+it, f \otimes \chi)| \\
 & \leq \Re \sum_{\substack{  p \leq x }} \frac{\chi (p)\lambda_f(p)}{p^{1/2+it+1/\log x}}
 \frac{\log (x/p)}{\log x} +  \frac 12
 \Re\sum_{\substack{  p \leq  x^{1/2}}} \frac{\chi (p^2)(\lambda_f(p^2)-1)}{p^{1+2it+2/\log x}}  \frac{\log (x/p^2)}{\log x}
 +\frac{2(A+1)\log q}{\log x} + O(1).
\end{split}
\end{align}

   Now
\begin{align}
\begin{split}
\label{sumoverpeval}
 \sum_{\substack{  p \leq  x^{1/2}  }} \frac{\chi (p^2)(\lambda_f(p^2)-1)}{p^{1+2it+2/\log x}}  \frac{\log (x/p^2)}{\log x}
 =\sum_{\substack{  p \leq  x^{1/2}  }} \frac{\chi (p^2)(\lambda_f(p^2)-1)}{p^{1+2it+2/\log x}}
 - 2\sum_{\substack{  p \leq  x^{1/2} }} \frac{\chi (p^2)(\lambda_f(p^2)-1)}{p^{1+2it+2/\log x}}  \frac{\log p}{\log x}.
\end{split}
\end{align}

  Applying \eqref{alpha} and \eqref{sumofsquares}, we see that
\begin{align}
\label{lambdasquarebound}
\begin{split}
|\lambda_f(p^2)-1|=|\lambda^2_f(p)-2| \leq |\lambda^2_f(p)|+2 \leq d(p)^2+2 =6.
\end{split}
\end{align}
From this and \eqref{mertenpartialsummation},
\begin{align}
\label{errorsumlogp}
  \sum_{\substack{ p \leq x^{1/2}  }}  \frac{\chi (p^2)(\lambda_f(p^2)-1)}{p^{1+2it+2/\log x}}  \frac{\log p}{\log x} \ll  \frac 1{\log x} \sum_{\substack{ p \leq x^{1/2} }}  \frac{\log p}{p} \ll 1.
\end{align}

Similarly,
\begin{align}
\label{sumpsquare}
\begin{split}
 \sum_{\substack{  p \leq  x^{1/2}  }} & \frac{\chi (p^2)(\lambda_f(p^2)-1)}{p^{1+2it+2/\log x}}
 = \sum_{\substack{  p \leq  x^{1/2} }} \frac{\chi (p^2)(\lambda_f(p^2)-1)}{p^{1+2it}} +\sum_{p \leq  x^{1/2}} \chi (p^2)(\lambda_f(p^2)-1) \Big (\frac{1}{p^{1+2it+2/\log x}}-\frac 1{p^{1+2it}} \Big ) \\
 =& \sum_{\substack{  p \leq  x^{1/2}  }} \frac{\chi (p^2)(\lambda_f(p^2)-1)}{p^{1+2it}} +O\Big ( \sum_{\substack{  p \leq  x^{1/2} }}  \frac{\log p}{p \log x}\Big )
 = \sum_{\substack{  p \leq  x^{1/2}   }} \frac{\chi (p^2)(\lambda_f(p^2)-1)}{p^{1+2it}}+O(1).
\end{split}
\end{align}

Applying \eqref{sumoverpeval}--\eqref{sumpsquare} in \eqref{logLupperbound1} now leads to \eqref{logLupperboundgen}. To establish
   \eqref{logLupperboundnonquad}, we may assume that $\log q \leq x^{1/2}$ and we observe that the character $\chi^2$ is also a primitive Dirichlet character modulo $q$ when $\chi$ is non-quadratic and that $\chi(p^2)=\chi^2(p)$. Keeping in mind that $|t| \leq q^A$. We then apply \eqref{PIT}, \eqref{PIT3} and partial summation to see that under GRH, 
\begin{align}
\label{sumplarge}
 \sum_{\substack{(\log q)^6 < p \leq x^{1/2} }}  \frac{\chi(p^2)}{p^{1+2it}}, \sum_{\substack{(\log q)^6 < p \leq x^{1/2} }}  \frac{\chi(p^2)\lambda_f(p^2)}{p^{1+2it}}   \ll 1.
\end{align}
 Moreover, \eqref{merten} leads to
\begin{align}
\label{sumpsmall}
  \sum_{\substack{\log q \leq p  \leq (\log q)^6}}  \frac{\chi(p^2)\lambda_f(p^2)}{p^{1+2it}} \ll \sum_{\substack{\log q  \leq q \leq (\log q)^6}}  \frac{1}{p}  \ll 1.
\end{align}

Now \eqref{logLupperboundnonquad} emerges by applying \eqref{sumoverpeval}--\eqref{sumpsmall} in \eqref{logLupperbound1}, completing the proof of the corollary.
\end{proof}

For any fundamental discriminant $d$, the character $\chi^{(d)}$ is primitive modulo $|d|$. We then have the following simplified version of Proposition \ref{lem: logLbound} for the quadratic case.
\begin{corollary}
\label{lem: logLboundquad}
 With the notation as above, let $X>0$ be large and assume the truth of GRH for $L(s,f \otimes \chi^{(8d)})$ for any positive, odd and square-free $d \leq X$. Let also $A(d)$ be given in \eqref{Addef}. We have for $t \leq X^A$ with $A>0$ being a fixed positive constant and any $2 \leq x \leq X$,
\begin{align*}
 \log & |A(d)L(\tfrac12+it, f \otimes \chi^{(8d)})| \\
& \leq   \Re\sum_{\substack{  p \leq x }} \frac{\chi^{(8d)} (p)\lambda_f(p)}{p^{1/2+it+1/\log x}}
 \frac{\log (x/p)}{\log x}-\frac 12\Re \sum_{\substack{  p \leq  x^{1/2}   }} \frac{\lambda_f(p^2)-1}{p^{1+2it}}
 +\frac{2(A+1)\log X}{\log x} + O\Big( \frac {\log X}{x^{1/2}\log x}+1 \Big).
\end{align*}
\end{corollary}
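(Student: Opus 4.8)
The plan is to retrace the proof of Corollary~\ref{lem: logLboundsimplified} with modulus $q=8d$ and $\chi=\chi^{(8d)}$ — permissible since $\chi^{(8d)}$ is primitive modulo $8d$ — and then insert two short extra steps exploiting the quadratic nature of $\chi^{(8d)}$ together with the factor $A(d)$. First I would start from Proposition~\ref{lem: logLbound} applied with modulus $8d$, $\lambda=1$ and any $2\le x\le X$; note that Proposition~\ref{lem: logLbound} places no restriction on $t$ and none on $x$ beyond $x\ge 2$, which neatly avoids the hypotheses $|t|\le q^A$, $x\le q$ of Corollary~\ref{lem: logLboundsimplified} (those would not cover the present ranges when $d$ is small). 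Discarding the $l\ge 3$ terms and rewriting the $l=1,2$ contributions through \eqref{sumofsquares} exactly as in the derivation of \eqref{logLupperboundgen}, and using $\log(8d)+\log(|t|+2)\le (A+1)\log X+O(1)$ (valid since $d\le X$ and $t\le X^A$) to bound the conductor term by $2(A+1)\log X/\log x+O(1)$ and the attendant error by $O(\log X/(x^{1/2}\log x))$, one obtains
\begin{align*}
 \log|L(\tfrac12+it,f\otimes\chi^{(8d)})| &\le \Re\sum_{p\le x}\frac{\chi^{(8d)}(p)\lambda_f(p)}{p^{1/2+it+1/\log x}}\frac{\log(x/p)}{\log x} - \tfrac12\Re\sum_{p\le x^{1/2}}\frac{\chi^{(8d)}(p^2)(\lambda_f(p^2)-1)}{p^{1+2it}} \\
 &\quad + \frac{2(A+1)\log X}{\log x} + O\Big(\frac{\log X}{x^{1/2}\log x}+1\Big).
\end{align*}

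The first extra step removes the character from the prime-square sum. Since $\chi^{(8d)}$ is real, $\chi^{(8d)}(p^2)=1$ for $p\nmid 8d$ and $\chi^{(8d)}(p^2)=0$ for $p\mid 8d$, so that
\begin{align*}
 -\tfrac12\Re\sum_{p\le x^{1/2}}\frac{\chi^{(8d)}(p^2)(\lambda_f(p^2)-1)}{p^{1+2it}} = -\tfrac12\Re\sum_{p\le x^{1/2}}\frac{\lambda_f(p^2)-1}{p^{1+2it}} + \tfrac12\Re\sum_{\substack{p\mid 8d\\ p\le x^{1/2}}}\frac{\lambda_f(p^2)-1}{p^{1+2it}}.
\end{align*}
The second extra step absorbs the correction over $p\mid 8d$ into $\log A(d)$. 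As $d$ is odd, the prime divisors of $8d$ are $2$ and those of $d$, hence $\sum_{p\mid 8d}1/p=\tfrac12+\sum_{p\mid d}1/p$; and the Hecke relation $\lambda_f(p^2)=\lambda_f(p)^2-1$ with Deligne's bound $|\lambda_f(p)|\le 2$ give $\lambda_f(p^2)-1\in[-2,2]$, so $|\lambda_f(p^2)-1|\le 2$ and the correction is at most $\sum_{p\mid 8d}1/p=\tfrac12+\sum_{p\mid d}1/p$. Since $\log A(d)=\sum_{p\mid d}\log(1-1/p)\le -\sum_{p\mid d}1/p$ by \eqref{Addef}, it follows that
\begin{align*}
 \log A(d)+\tfrac12\Re\sum_{\substack{p\mid 8d\\ p\le x^{1/2}}}\frac{\lambda_f(p^2)-1}{p^{1+2it}}\le\tfrac12=O(1).
\end{align*}

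Combining these — i.e.\ writing $\log|A(d)L(\tfrac12+it,f\otimes\chi^{(8d)})|=\log A(d)+\log|L(\tfrac12+it,f\otimes\chi^{(8d)})|$, bounding $\log|L|$ by the displayed inequality, substituting the first identity for the prime-square sum, and folding the leftover $p\mid 8d$ correction together with $\log A(d)$ into the error via the last display — yields exactly the claimed bound. The step I expect to require real care is precisely the pairing of the $p\mid 8d$ part of the prime-square sum against $\log A(d)$: it goes through only because the coefficient $\tfrac12$ of that sum and the sharp bound $|\lambda_f(p^2)-1|\le 2$ keep the per-prime contribution at most $1/p\le -\log(1-1/p)$, so a single power of $A(d)$ is exactly enough — which is in turn why the factor $A(d)$, matching the weight $A(d)^{-k}$ in the smoothed character sum of Lemma~\ref{prsum}, is built into the statement. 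Everything else is the routine bookkeeping already present in the proof of Corollary~\ref{lem: logLboundsimplified}.
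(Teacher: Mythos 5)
Your proof is correct and follows essentially the same route as the paper: apply the fixed-modulus upper bound to $\chi^{(8d)}$, use $\chi^{(8d)}(p^2)\in\{0,1\}$ to strip the character from the prime-square sum, and absorb the resulting $p\mid d$ correction into $\log A(d)$ via $1/p\le-\log(1-1/p)$, which is exactly the paper's \eqref{sump1}. Your only deviations are improvements in bookkeeping: you return to Proposition~\ref{lem: logLbound} to sidestep the hypotheses $x\le q$, $|t|\le q^A$ of Corollary~\ref{lem: logLboundsimplified} (which need not hold with $q=8d$ small), and you bound the whole $p\mid 8d$ correction at once instead of splitting at $p=x^{1/2}$ as the paper does in \eqref{plarge}.
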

\begin{proof}
  Note that
\begin{align}
\begin{split}
\label{sumoverpevalquad}
 \sum_{\substack{  p \leq x^{1/2}   }} \frac{\chi^{(8d)}(p^2)(\lambda_f(p^2)-1)}{p^{1+2it}}
 =\sum_{\substack{  p \leq  x^{1/2} \\ p \nmid d  }} \frac{\lambda_f(p^2)-1}{p^{1+2it}}=\sum_{\substack{ 2<  p \leq  x^{1/2}  }} \frac{\lambda_f(p^2)-1}{p^{1+2it}}-
 \sum_{\substack{   p | d  }} \frac{\lambda_f(p^2)-1}{p^{1+2it}}+
 \sum_{\substack{  p >  x^{1/2} \\ p | d  }} \frac{\lambda_f(p^2)-1}{p^{1+2it}}.
\end{split}
\end{align}
  Note also that by \eqref{sumofsquares} and \eqref{lambdasquarebound},
\begin{align}
\label{plarge}
\begin{split}
 \sum_{\substack{   p | d  }} \frac{\lambda_f(p^2)-1}{p^{1+2it}} \leq &   \sum_{\substack{   p | d  }} \frac{2}{p} , \\
 \sum_{\substack{  p >  x^{1/2} \\ p | d  }} \frac{\lambda_f(p^2)-1}{p^{1+2it}} \ll & \frac 1{x^{1/2}} \sum_{\substack{  p > x^{1/2} \\ p|d }}1 \ll \frac {\log X}{x^{1/2}\log x}.
\end{split}
\end{align}

  We deduce from \eqref{sumoverpevalquad} and \eqref{plarge} and the inequality $x \leq -\log(1-x)$ for any $0<x<1$ that
\begin{align}
\label{sump1}
\begin{split}
 -\frac 12\Re\sum_{\substack{  p \leq x^{1/2}   }} \frac{\chi^{(8d)}(p^2)(\lambda_f(p^2)-1)}{p^{1+2it}} & \leq  -\frac 12\Re\sum_{\substack{  p \leq  x^{1/2}   }} \frac{\lambda_f(p^2)-1}{p^{1+2it}}+\sum_{\substack{p|d}} \frac{1}{p}+O\Big( \frac {\log X}{x^{1/2}\log x} \Big) \\
  \leq & -\frac 12\Re\sum_{\substack{  p \leq  x^{1/2}   }} \frac{\lambda_f(p^2)-1}{p^{1+2it}}-\sum_{\substack{p|d}} \log \Big(1- \frac{1}{p}\Big)+O\Big(\frac {\log X}{x^{1/2}\log x} \Big).
\end{split}
\end{align}

 Applying \eqref{sumoverpevalquad}--\eqref{sump1} in \eqref{logLupperboundgen} now readily leads to the assertion of the corollary.
\end{proof}

 As a straightforward consequence of Corollary \ref{lem: logLboundsimplified}, the following result sums $\log |L(\frac 12+it,\chi)|$ over various $t$'s.
\begin{corollary}
\label{lemmasum}
  With the notation as above, suppose that $\chi$ is a non-quadratic primitive Dirichlet character modulo $q$. Let $k$ be a positive integer and $A,a_1,a_2,\ldots, a_{k}$ fixed positive real constants.  Set $a:=a_1+\cdots+ a_{k}$.  Let $t_1,\ldots, t_{k}$ be fixed real numbers with $|t_i|\leq q^A$. For any integer $n$, let
$$h(n)=:\frac{1}{2}  \sum^{k}_{m=1}a_mn^{-it_m} .$$
Then, under GRH, for $x \geq 2$,
\begin{align*}
\begin{split}
 \sum^{k}_{m=1}  a_m & \log |L(\tfrac12+it_m, f \otimes \chi)| \\
 & \leq  2 \Re \sum_{p\leq x} \frac{h(p)\lambda_f(p)\chi(p)}{p^{1/2+1/\log x}}\frac{\log x/p}{\log x}
    - \Re \sum_{p\leq \min (\log q, x^{1/2})} \frac{h(p^2)\chi^2(p)(\lambda_f(p^2)-1)}{p}+2(A+1)a\frac{\log q}{\log x}+ O\Big(\frac {\log q}{x^{1/2}\log x}+1\Big).
\end{split}
\end{align*}
\end{corollary}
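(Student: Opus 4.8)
The plan is to deduce Corollary~\ref{lemmasum} directly from Corollary~\ref{lem: logLboundsimplified}, applying the estimate \eqref{logLupperboundnonquad} once for each index $m$ with $t=t_m$ and then summing the resulting inequalities with weights $a_m$. First I would fix $x \geq 2$ and, for each $1 \leq m \leq k$, invoke \eqref{logLupperboundnonquad} (which is legitimate since $\chi$ is non-quadratic and $|t_m| \leq q^A$), obtaining
\begin{align*}
\log |L(\tfrac12+it_m, f \otimes \chi)| \leq & \Re\sum_{p \leq x} \frac{\chi(p)\lambda_f(p)}{p^{1/2+it_m+1/\log x}}\frac{\log(x/p)}{\log x} -\frac 12\Re\sum_{p \leq \min(\log q, x^{1/2})}\frac{\chi(p^2)(\lambda_f(p^2)-1)}{p^{1+2it_m}} \\
& +\frac{2(A+1)\log q}{\log x}+O\Big(\frac{\log q}{x^{1/2}\log x}+1\Big).
\end{align*}
Multiplying by $a_m > 0$ and summing over $m$, the error terms combine to $O\big(a(\log q)/(x^{1/2}\log x) + a\big)$ and the shift-free main terms combine to $2(A+1)a(\log q)/\log x$, with $a = a_1 + \cdots + a_k$ as defined.

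Next I would recognize the two resulting prime sums as exactly the expressions built from $h$. For the first sum, $\sum_{m} a_m \chi(p)\lambda_f(p) p^{-1/2-it_m-1/\log x} = 2h(p)\chi(p)\lambda_f(p)p^{-1/2-1/\log x}$ by the definition $h(n) = \tfrac12\sum_m a_m n^{-it_m}$, since $p^{-it_m}$ is precisely the factor that converts $a_m$ into the $h$-weighted form; taking real parts gives the term $2\Re\sum_{p \leq x} h(p)\lambda_f(p)\chi(p)p^{-1/2-1/\log x}\tfrac{\log(x/p)}{\log x}$. For the second sum, $\sum_m a_m \chi(p^2)(\lambda_f(p^2)-1)p^{-1-2it_m} = \sum_m a_m \chi^2(p)(\lambda_f(p^2)-1)(p^2)^{-it_m} p^{-1} = 2h(p^2)\chi^2(p)(\lambda_f(p^2)-1)p^{-1}$, using $\chi(p^2)=\chi^2(p)$ and $(p^2)^{-it_m} = p^{-2it_m}$; the factor $-\tfrac12$ from each application then yields exactly $-\Re\sum_{p \leq \min(\log q, x^{1/2})} h(p^2)\chi^2(p)(\lambda_f(p^2)-1)p^{-1}$. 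Collecting all pieces gives precisely the claimed inequality.

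There is essentially no hard step here—the corollary is a bookkeeping repackaging of \eqref{logLupperboundnonquad}—so the main point to be careful about is the algebraic identity that absorbs the $k$ separate shift factors $p^{-it_m}$ into the single weight function $h$, and in particular the matching of $\tfrac12\sum_m a_m \leftrightarrow h$ so that the leading constants come out as $2$ and $-1$ rather than $1$ and $-\tfrac12$. I would also note explicitly that the summation over $m$ is of a fixed finite length $k$, so no uniformity issue arises and all implied constants remain dependent only on $k$, $A$, and the $a_j$'s, exactly as asserted. This completes the proof.
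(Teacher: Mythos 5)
Your proof is correct and follows exactly the route the paper intends: the paper states Corollary \ref{lemmasum} as "a straightforward consequence of Corollary \ref{lem: logLboundsimplified}" without further detail, and your argument — applying \eqref{logLupperboundnonquad} at each shift $t_m$, summing with weights $a_m$, and recombining the shift factors $p^{-it_m}$ and $p^{-2it_m}$ into $2h(p)$ and $2h(p^2)$ respectively — is precisely that deduction, with the constants checked correctly.
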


   Similarly, Corollary \ref{lem: logLbound} gives the following result on sums of $\log |L(\frac 12+it,\chi^{(8d)})|$ over various $t$'s.
\begin{corollary}
\label{lemmasumquad}
   With the notation as above, let $k$ be a positive integer and let $A,a_1,a_2,\ldots, a_{k}$ be fixed positive real constants, $x\geq 2$.  Set $a:=a_1+\cdots+ a_{k}$.  Suppose $X$ is a large real number, $d$ is any positive, odd and square-free number with $d \leq X$, and $t_1,\ldots, t_{k}$ be fixed real numbers with $|t_i|\leq X^A$.  Then, we have, under GRH,
\begin{align}
\label{mainupperquad}
\begin{split}
 \sum^{k}_{m=1} a_m & \log |A(d)L(\tfrac12+it_m, f \otimes \chi^{(8d)})| \\
    \leq & 2 \Re\sum_{p\leq x} \frac{h(p)\lambda_f(p)\chi^{(8d)}(p)}{p^{1/2+1/\log x}}\frac{\log x/p}{\log x}
    -\Re \sum_{p\leq x^{1/2}} \frac{h(p^2)(\lambda_f(p^2)-1)}{p}+2(A+1)a\frac{\log X}{\log x}+ O\left( \frac {\log X}{x^{1/2}\log x}+1 \right).
\end{split}
\end{align}
\end{corollary}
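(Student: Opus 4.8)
The plan is to obtain Corollary \ref{lemmasumquad} by forming a suitable weighted sum of the pointwise bounds furnished by Corollary \ref{lem: logLboundquad}. For each index $m$ with $1 \leq m \leq k$ I would apply Corollary \ref{lem: logLboundquad} with the shift $t = t_m$; this is legitimate since $|t_m| \leq X^A$ and $2 \leq x \leq X$ are precisely the hypotheses there, giving
\[
\log |A(d) L(\tfrac12+it_m, f \otimes \chi^{(8d)})| \leq \Re \sum_{p\leq x} \frac{\chi^{(8d)}(p)\lambda_f(p)}{p^{1/2+it_m+1/\log x}} \frac{\log(x/p)}{\log x} - \frac12 \Re \sum_{p\leq x^{1/2}} \frac{\lambda_f(p^2)-1}{p^{1+2it_m}} + \frac{2(A+1)\log X}{\log x} + O\Big(\frac{\log X}{x^{1/2}\log x}+1\Big).
\]
Since each $a_m$ is a positive constant, multiplying the $m$-th inequality by $a_m$ and summing over $1 \leq m \leq k$ produces an upper bound for $\sum_{m=1}^{k} a_m \log |A(d)L(\tfrac12+it_m, f \otimes \chi^{(8d)})|$ with no inequality reversing. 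What remains is purely bookkeeping: to rewrite the three resulting sums in terms of the weight $h$.

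For the linear prime sum I would interchange the two finite sums over $p$ and over $m$ and factor out $p^{-it_m}$, so that the $m$-sum collapses to $\sum_{m=1}^{k} a_m p^{-it_m} = 2h(p)$ by the definition of $h$; this converts $\sum_{m} a_m \Re \sum_{p\leq x}(\cdots)$ into $2\Re \sum_{p\leq x} h(p)\lambda_f(p)\chi^{(8d)}(p) p^{-1/2-1/\log x}\,\log(x/p)/\log x$, with nothing lost at $p=2$ since $\chi^{(8d)}(2)=0$. For the square terms the same manoeuvre uses $\sum_{m=1}^{k} a_m p^{-2it_m} = 2h(p^2)$, turning $-\tfrac12\sum_{m} a_m \Re\sum_{p\leq x^{1/2}} (\lambda_f(p^2)-1)p^{-1-2it_m}$ into $-\Re\sum_{p\leq x^{1/2}} h(p^2)(\lambda_f(p^2)-1)/p$. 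Finally the main term $\frac{2(A+1)\log X}{\log x}$, weighted by $a_m$ and summed, gives $2(A+1)a\frac{\log X}{\log x}$ with $a = a_1+\cdots+a_k$, while the $O$-terms collapse into $O\big(\frac{\log X}{x^{1/2}\log x}+1\big)$ because $k$ and the $a_m$ are fixed. Assembling these identities yields exactly \eqref{mainupperquad}.

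There is no genuine obstacle here; the only points that call for a moment's care are the positivity of the weights $a_m$ (which is what allows the pointwise inequalities to be added), the harmlessness of swapping the two finite sums over $p$ and $m$, and the vanishing of $\chi^{(8d)}(2)$, so that extending the first sum over all $p \leq x$ costs nothing. The argument is the exact quadratic-twist analogue of the passage from Corollary \ref{lem: logLboundsimplified} to Corollary \ref{lemmasum} in the fixed-modulus case.
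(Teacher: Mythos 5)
Your proposal is correct and is exactly the argument the paper intends (the paper leaves the proof implicit, presenting the corollary as the direct analogue of the passage from Corollary \ref{lem: logLboundsimplified} to Corollary \ref{lemmasum}): apply Corollary \ref{lem: logLboundquad} at each $t_m$, multiply by the positive weights $a_m$, sum, and collapse the $m$-sums via $\sum_m a_m p^{-it_m}=2h(p)$ and $\sum_m a_m p^{-2it_m}=2h(p^2)$. No issues.
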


We end the section with the following upper bounds on moments of twisted modular $L$-functions, which can be obtained by modifying the proof of \cite[Proposition 1]{Szab}.
\begin{lemma}
\label{prop: upperbound}
With the notation as above and assuming the truth of GRH, let $k\geq 1$ be a fixed integer and ${\bf a}=(a_1,\ldots ,a_{k}),\ t=(t_1,\ldots ,t_{k})$
 be real $k$-tuples such that $a_i \geq 0$ for all $i$. Then for $\sigma \geq 1/2$ and large $q$,
\begin{align*}
   \sum_{\chi \in X^*_q} \big| L\big(\sigma+it_1, f \otimes \chi \big) \big|^{a_1} \cdots \big| L\big(\sigma+it_{k}, f \otimes  \chi  \big) \big|^{a_{k}} \ll_{{\bf a}} &  \varphi(q)(\log q)^{O(1)}.
\end{align*}
\end{lemma}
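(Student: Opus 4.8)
The goal is to bound the shifted moment of $|L|^{a_i}$ over $\chi \in X_q^*$ by $\varphi(q)(\log q)^{O(1)}$, and the strategy is the one used for \cite[Proposition 1]{Szab}: reduce the bound on the $L$-functions to a bound on a short Dirichlet polynomial over primes via Corollary \ref{lemmasum}, then estimate the resulting moment of an exponential of a prime sum by expanding into moments. The quadratic characters among $X_q^*$ are few (there are at most $O(1)$ of them, and in any case $O(q^\varepsilon)$), and for each such $\chi$ one has the trivial convexity bound $|L(\sigma+it_j, f\otimes\chi)| \ll q^{O(1)}$ under GRH (indeed the Lindel\"of bound $\ll q^{\varepsilon}$ holds, but even convexity suffices), so their total contribution is $\ll q^{O(1)} \ll \varphi(q)(\log q)^{O(1)}$. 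Hence it remains to handle the non-quadratic $\chi$.

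\textbf{Main steps.} First I would fix a parameter $x = q^{1/\beta}$ for a suitable large constant $\beta$ (so that $\log q / \log x = \beta^{-1}$ is a small fixed constant times the $a_i$'s, and the error term $\log q/(x^{1/2}\log x)$ is negligible), and apply Corollary \ref{lemmasum} to each non-quadratic $\chi$. This gives
\[
\sum_{m=1}^{k} a_m \log|L(\tfrac12 + it_m, f\otimes\chi)| \le 2\Re \sum_{p \le x}\frac{h(p)\lambda_f(p)\chi(p)}{p^{1/2+1/\log x}}\frac{\log(x/p)}{\log x} + C
\]
for an absolute constant $C$ (the sum over $p \le \min(\log q, x^{1/2})$ of $h(p^2)\chi^2(p)(\lambda_f(p^2)-1)/p$ is $O(1)$ since $|\lambda_f(p^2)-1| \le 6$ and $\sum_{p \le \log q} 1/p = \log\log\log q + O(1)$ — this contributes only to the $O(1)$, and in fact even without discarding it one gets an admissible power of $\log q$). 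Exponentiating, $\prod_m |L(\tfrac12+it_m, f\otimes\chi)|^{a_m} \ll \exp\big(2\Re \sum_{p\le x} \cdots\big)$. Next I would sum over non-quadratic $\chi \in X_q^*$, dropping the non-quadratic restriction to get an upper bound by $\sum_{\chi \bmod q}$ (all characters, or all primitive ones — orthogonality is cleanest over all characters mod $q$). Then I would expand the exponential as a power series, or more robustly split the prime sum at some threshold and Taylor-expand the low part while trivially bounding or using a few moments for the high part, exactly as in \cite{Szab}: write $\exp(2\Re P(\chi))$ where $P(\chi) = \sum_{p\le x} b(p)\chi(p)$ with $|b(p)| \ll |h(p)||\lambda_f(p)|/\sqrt{p} \ll d(p)/\sqrt p \ll p^{-1/2+\varepsilon}$, expand $\exp(2\Re P) = \sum_{j\ge 0} (2\Re P)^j/j!$, and use that $\sum_{\chi \bmod q} \chi(n)\overline{\chi(n')} = \varphi(q)\mathbf 1_{n\equiv n' (q), (nn',q)=1}$ kills off-diagonal terms when $j$ is not too large (say $x^j < q$, i.e. $j < \beta$), forcing $n = n'$; the diagonal contribution of each such term is controlled by $\big(\sum_{p\le x} |b(p)|^2\big)^{j} \ll (\log\log x)^{O(j)}$ using \eqref{merten1} ($\sum_p \lambda_f^2(p)/p \sim \log\log x$), while the tail $j \ge \beta$ is bounded by choosing $\beta$ large relative to the $a_i$ so that the Taylor tail is absorbed. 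Summing the series over $j$ gives $\varphi(q) \exp\big(O(\log\log q)\big) = \varphi(q)(\log q)^{O(1)}$, as desired; one also needs $\sigma \ge 1/2$, but since $|L(\sigma+it,f\otimes\chi)| \le |L(\tfrac12+it, f\otimes\chi)|$ is \emph{not} generally true, I would instead note that for $\sigma \ge 1/2$ the very same argument applies verbatim with $\sigma_0 = \sigma + \lambda/\log x$ in Proposition \ref{lem: logLbound}, or simply observe that the analogue of Corollary \ref{lemmasum} with $\tfrac12$ replaced by $\sigma$ holds with an error term no larger, since the proof of Proposition \ref{lem: logLbound} runs for general $\sigma_0 > \sigma \ge 1/2$.

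\textbf{The main obstacle.} The delicate point is the off-diagonal control in the moment expansion: one must choose the truncation point $x$ and the range of $j$ in the Taylor expansion so that, on one hand, all "short" terms $(2\Re P)^j$ with $j$ small enough have length $x^j < q$ and hence only the exact diagonal $n=n'$ survives the character sum, and on the other hand the "long" tail $j \ge J$ is genuinely negligible. In \cite{Szab} this is handled by the device of splitting $P$ itself into blocks over dyadic ranges of primes and Taylor-expanding each block to an order depending on the block (shorter primes get expanded further), so that the total length of each surviving product stays below $q$; I would import that block decomposition essentially unchanged, the only new ingredient being that $\lambda_f(p)$ appears as a weight, which is harmless because $|\lambda_f(p)| \le 2$ and $\sum_p \lambda_f^2(p)/p = \log\log x + O(1)$ by \eqref{merten1}, so every occurrence of $\sum_p 1/p$ in Szab\'o's bookkeeping is replaced by $\sum_p \lambda_f^2(p)/p$ with no change to the order of magnitude. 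A secondary check is that the extra prime-square sum $\sum_{p\le\min(\log q, x^{1/2})} h(p^2)\chi^2(p)(\lambda_f(p^2)-1)/p$ in Corollary \ref{lemmasum} contributes only $O(\log\log\log q)$ in absolute value, so $\exp$ of it is $(\log q)^{o(1)}$, which is absorbed into the $(\log q)^{O(1)}$; this uses only \eqref{merten} and the bound $|\lambda_f(p^2)| \le 5$ from \eqref{lambdasquarebound1}.
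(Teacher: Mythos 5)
Your proposal is correct and follows essentially the route the paper intends: the paper gives no written proof of this lemma, saying only that it can be obtained by modifying the proof of \cite[Proposition 1]{Szab}, and your outline (Corollary \ref{lemmasum} with a fixed power $x=q^{1/\beta}$, exponentiation, orthogonality combined with the truncated Taylor/block expansion of Szab\'o--Harper, and \eqref{merten1} replacing $\sum_p 1/p$ in the diagonal bookkeeping, plus a separate disposal of the $O(q^{\varepsilon})$ quadratic characters) is precisely that modification. The one slip is the parenthetical claim that convexity suffices for the quadratic characters --- convexity in the conductor aspect gives $q^{(a_1+\cdots+a_k)/2+\varepsilon}$ per character, which exceeds $\varphi(q)$ once $\sum_j a_j>2$ --- but your primary appeal to the GRH Lindel\"of bound there is correct, so nothing is lost.
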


   Similarly, the following majorant for moments of quadratic twists of $L$-functions can be obtained by altering the proof of \cite[Theorem 2]{Harper}.
\begin{lemma}
\label{prop: upperboundquad}
With the notation as above and the truth of GRH, let $k\geq 1$ be a fixed integer and ${\bf a}=(a_1,\ldots ,a_{k}),\ t=(t_1,\ldots ,t_{k})$
 be real $k$-tuples such that $a_i \geq 0$ for all $i$.  Then for $\sigma \geq 1/2$ and large $X$,
\begin{align*}
   \sumstar_{\substack{(d,2)=1 \\ d \leq X}}\big| L\big(\sigma+it_1, f \otimes \chi^{(8d)} \big) \big|^{a_1} \cdots \big| L\big(\sigma+it_{k}, f \otimes  \chi^{(8d)}  \big) \big|^{a_{k}} \ll_{{\bf a}} &  X(\log X)^{O(1)}.
\end{align*}
\end{lemma}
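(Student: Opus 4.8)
The plan is to adapt Harper's method \cite{Harper} for sharp conditional moment bounds --- run only to the accuracy $X(\log X)^{O(1)}$ --- to the family of quadratic twists $\{L(s,f\otimes\chi^{(8d)})\}_d$, using Corollary \ref{lemmasumquad} as the majorant for $\log|L|$ and Lemma \ref{prsum} as the arithmetic input, playing the role that the mean value theorem for Dirichlet polynomials plays on the critical line.

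First I would reduce to $\sigma=\tfrac12$, the only hard case (for $\sigma>\tfrac12$ one integrates the logarithmic derivative from $\sigma$ rather than from $\tfrac12$ in the proof of Proposition \ref{lem: logLbound}, which only shortens the prime sum), and, using $A(d)=\prod_{p\mid d}(1-1/p)\gg1/\log\log X$ for $d\le X$, reduce further to bounding $\sumstar_{(d,2)=1,\,d\le X}\prod_m|A(d)L(\tfrac12+it_m,f\otimes\chi^{(8d)})|^{a_m}$, which matches the normalisation of Corollary \ref{lemmasumquad} and Lemma \ref{prsum}. (By H\"older one may also reduce to $k=1$, although this is not necessary.) Next, apply Corollary \ref{lemmasumquad} with $x=X^{\theta}$ for a small fixed $\theta>0$, so that $2(A+1)a\log X/\log x=O(1)$ and the prime-square sum is $O(\log\log X)$ by Lemma \ref{RS3} and \eqref{merten}; exponentiating gives, for every odd squarefree $d\le X$,
\[
\prod_{m=1}^{k}\bigl|A(d)L(\tfrac12+it_m,f\otimes\chi^{(8d)})\bigr|^{a_m}\ll(\log X)^{O(1)}\exp(\mathcal P(d)),\qquad \mathcal P(d):=2\Re\sum_{p\le x}\frac{h(p)\lambda_f(p)\chi^{(8d)}(p)}{p^{1/2+1/\log x}}\frac{\log(x/p)}{\log x}.
\]

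Following \cite{Harper}, I would decompose $\mathcal P=\sum_{1\le j\le J}\mathcal P_j$ along the ranges $I_j=(x^{e^{-j}},x^{e^{-j+1}}]$, $J\asymp\log\log X$, so that $\sum_{p\in I_j}1/p\asymp1$ and each $\mathcal P_j(d)$ has bounded variance in $d$; introduce truncation parameters $\ell_j$ (a multiple of $e^{j/2}$) and the ``good'' set $\mathcal G=\{d:|\mathcal P_j(d)|\le\ell_j\ \forall j\}$, with $\ell_j$ and $\theta$ calibrated as in \cite{Harper} so that the Dirichlet polynomials below are supported on $n\le X^{2-\delta}$ --- the range where the error $O_k(X^{1/2+\varepsilon}n^{1/4+\varepsilon})$ of Lemma \ref{prsum} is harmless --- while the exceptional sets stay as thin as possible. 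On $\mathcal G$ one replaces each $\exp(\mathcal P_j(d))$ by a Taylor polynomial in $\mathcal P_j(d)$ of degree $\asymp\ell_j$ (the remainder being negligible there), multiplies over $j$, expands into $\sum_n c_n\chi^{(8d)}(n)$, and sums over $d$ via Lemma \ref{prsum}: the diagonal $n=\square$ yields the main term $\ll X\exp\!\bigl(O(\sum_{p\le x}|h(p)|^2\lambda_f(p)^2/p)\bigr)\ll X(\log X)^{O(1)}$ by \eqref{merten1}, and the off-diagonal contributes $\ll X^{1-\delta'}$ by the length restriction.

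The contribution of the complementary bad set is the main obstacle, and it is where the argument of \cite{Harper} is needed in full. One first shows, via Lemma \ref{prsum}, that the high even moments $\sumstar_{(d,2)=1,\,d\le X}\mathcal P_j(d)^{2q}$ obey a Gaussian-type bound $\ll X\,q!\,C^q$ for every $q$ up to the order permitted by the length constraint $x^{2qe^{-j+1}}\le X^{2-\delta}$, so that $\{d:|\mathcal P_j(d)|>\ell_j\}$ is thin; one then telescopes over $j$, organising the bad $d$ according to the \emph{first} index at which $|\mathcal P_j(d)|$ exceeds $\ell_j$, replacing the preceding ranges by their Taylor polynomials, the critical range by $(|\mathcal P_j(d)|/\ell_j)^{2q}\exp(\mathcal P_j(d))$, and the remaining ranges by a Cauchy--Schwarz step that feeds back into a moment of the same shape. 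The delicate point --- the innovation of \cite{Harper} over \cite{Sound2009} --- is that for the longest ranges only boundedly many moments are available, so a crude union bound fails and one must combine these with the deterministic bound valid on $\mathcal G$ and the conditional second-moment device of \cite{Harper} to keep this contribution $\ll X(\log X)^{O(1)}$. Assembling the good and bad parts and undoing the initial reductions gives the asserted bound.
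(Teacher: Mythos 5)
Your proposal follows essentially the same route as the paper, which gives no details and simply asserts that the lemma ``can be obtained by altering the proof of \cite[Theorem 2]{Harper}'': you have spelled out exactly that alteration, with Corollary \ref{lemmasumquad} as the majorant, Lemma \ref{prsum} supplying the quasi-orthogonality, and Harper's range decomposition, good/bad set dichotomy and moment bounds carried over verbatim. The sketch is correct (the only points left implicit, such as controlling the $\ell^1$-mass of the Dirichlet polynomial coefficients against the $O_k(X^{1/2+\varepsilon}n^{1/4+\varepsilon})$ error in Lemma \ref{prsum}, are standard parts of Harper's calibration), so nothing further is needed.
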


\section{Proof of Theorems \ref{t1} and \ref{t1quad}}

   As our proof is similar to there of \cite[Theorem 1]{Szab} and \cite[Theorem 1.1]{G&Zhao24-06}, we shall be brief here. For the proof of Theorem \ref{t1}, first note that upon setting $x=\log q$ in \eqref{logLupperboundgen} and estimating the summations there trivially implies that for some constant $C_0>0$,
\begin{align*}
& |L(\tfrac12+it, f \otimes \chi)|  \ll \exp \Big( \frac {C_0\log q}{\log \log q} \Big).
\end{align*}
  On the other hand, we note that (see \cite[Theorem 2.9]{MVa1}) for $q \geq 3$,
\begin{align*}
& \varphi(q) \gg \frac {q}{\log \log q}.
\end{align*}

Hence Theorem \ref{t1} follows from
\begin{align*}
\begin{split}
\sum_{\substack{\chi \in X^*_q \\ \chi^2 \neq \chi_0 }} & \big| L\big(\tfrac12+it_1, f \otimes \chi^{(8d)} \big) \big|^{a_1} \cdots \big| L\big( \tfrac12+it_{k},f \otimes \chi  \big) \big|^{a_{k}} \\
& \ll  \varphi(q)(\log q)^{(a_1^2+\cdots +a_{k}^2)/4} \prod_{1\leq j<l \leq k} \Big|\zeta \Big(1+i(t_j-t_l)+\tfrac 1{\log q} \Big) \cdot L\Big(1+i(t_j-t_l)+\tfrac 1{\log q}, \operatorname{sym}^2 f\Big) \Big|^{a_ia_j/2}.
\end{split}
\end{align*}

  Following the approach of A. J. Harper \cite{Harper}, we define for a large number $M$, depending on ${\bf a}$ only,
\begin{align}
\label{alphadef}
\begin{split}
 \alpha_{0} = 0, \;\;\;\;\; \alpha_{i} = \frac{20^{i-1}}{(\log\log q )^{2}} \;\;\; \mbox{for} \; i \geq 1 \quad \mbox{and} \quad
\mathcal{J} = \mathcal{J}_{{\bf a}, A, q} = 1 + \max\left\{i : \alpha_{i} \leq 10^{-M} \right\} .
\end{split}
\end{align}

Setting $x=q^{\alpha_j}$ in Lemma \ref{lemmasum} gives that for $\chi \in X^*_q$ with $\chi^2 \neq \chi_0$,
\begin{align*}
\begin{split}
 & \sum^{k}_{m=1}a_m\log |L(\tfrac 12+it_m,\chi)| \le 2 \Re \sum^{j}_{l=1} {\mathcal M}_{l,j}(\chi)-\Re \sum_{p\leq \log q}
 \frac{h(p^2)\chi^2(p)(\lambda_f(p^2)-1)}{p}+2(A+1)a\alpha^{-1}_j+O(1),
\end{split}
\end{align*}
  where for $1\leq i \leq j \leq \mathcal{J}$,
\[ {\mathcal M}_{l,j}(\chi) = \sum_{q^{\alpha_{l-1}} < p \leq q^{\alpha_{l}}}  \frac{\chi (p)\lambda_f(p)h(p)}{p^{1/2+1/\log q ^{\alpha_{j}}}} \frac{\log (q ^{\alpha_{j}}/p)}{\log q^{\alpha_{j}}}. \]

  We also define the following sets:
\begin{align*}
  \mathcal{S}(0) =& \left\{  \chi \in X^*_q : | \Re {\mathcal M}_{1,l}(\chi)| > \alpha_{1}^{-3/4} \; \; \mbox{for some}  \; 1 \leq l  \leq \mathcal{J} \right\}, \\
 \mathcal{S}(j) =& \left\{  \chi \in X^*_q : | \Re {\mathcal M}_{i,l}(\chi)| \leq \alpha_{i}^{-3/4} \; \; \mbox{for all}  \; 1 \leq i \leq j, \; \mbox{and} \; i \leq l \leq \mathcal{J}, \right. \\
 & \hspace*{4cm} \left. \;\;\;\;\; \text{but }  |\Re{\mathcal M}_{j+1,l}(\chi)| > \alpha_{j+1}^{-3/4} \; \text{ for some } j+1 \leq l \leq \mathcal{J} \right\}, \quad 1\leq j \leq \mathcal{J},  \\
 \mathcal{S}(\mathcal{J}) =& \left\{ \chi \in X^*_q : |\Re{\mathcal M}_{i, \mathcal{J}}(\chi)| \leq \alpha_{i}^{-3/4} \; \mbox{for all}  \; 1 \leq i \leq \mathcal{J} \right\}.
\end{align*}

Proceeding as in the proof of \cite[Section 4]{Szab}, upon using the orthogonality relation for Dirichlet characters (see \cite[Corollary 4.5]{MVa1})
\begin{align}
\label{orthrel}
\sum_{\chi \in X_q} \chi(n)=\begin{cases}
 \varphi(q) \quad \text{if} \ n \equiv 1 \pmod q, \\
 0 \quad \text{otherwise},
\end{cases}
\end{align}
we get
\begin{align}
\label{Tupperbound}
 &\sum_{\substack{\chi \in X^*_q \\ \chi^2 \neq \chi_0 }} \big| L\big( \tfrac12+it_1, f \otimes \chi \big) \big|^{a_1} \cdots \big| L\big(\tfrac12+it_{k},f \otimes \chi  \big) \big|^{a_{k}}  \ll \varphi(q) \exp \left (\sum_{p \leq q}\frac {|h(p)\lambda_f(p)|^2}{p}  \right ).
\end{align}

  We note that
\begin{align}
\label{hexp}
\begin{split}
|h(p)|^2 = \sum^k_{j=1}\frac {a^2_j}{4}+\sum_{1 \leq i < j \leq k}\frac {a_ia_j}{2}\cos(|t_i-t_j|\log p).
\end{split}
\end{align}

  Moreover, we apply \eqref{sumofsquares} to see that
\begin{align}
\label{sumlambdapsquare}
\begin{split}
  \lambda^2_f(p)=\lambda_f(p^2)+1. 
\end{split}
\end{align}

  Now, the estimation given in \eqref{Lprodbounds} follows readily from \eqref{mertenstype}-\eqref{mertenstypesympower}, \eqref{Tupperbound}--\eqref{sumlambdapsquare}.  This completes the proof of Theorem \ref{t1}. \newline

  Next, for the proof of Theorem \ref{t1quad}, we denote $\Phi$ be a smooth, non-negative function such that $\Phi(x) \leq 1$ for all $x$ and that $\Phi$ is supported on $[1/4,3/2]$ satisfying $\Phi(x) =1$ for $x\in [1/2,1]$. Upon dividing $0<d \leq X$ into dyadic blocks, we see that in order to
 prove Theorem \ref{t1quad}, it suffices to show that
\begin{align*}
\begin{split}
  \sumstar_{\substack{(d,2)=1 \\ d \leq X}} & \big| L\big(\tfrac 12+it_1, f \otimes \chi^{(8d)} \big) \big|^{a_1} \cdots \big| L\big(\tfrac 12+it_{k},f \otimes \chi^{(8d)}  \big) \big|^{a_{k}}\Phi \Big( \frac d{X} \Big) \\
\ll & X(\log X)^{(a_1^2+\cdots +a_{k}^2)/4} \\
& \times \prod_{1\leq j<l \leq k} \big|\zeta (1+i(t_j-t_l)+\tfrac 1{\log X} ) \big|^{a_ia_j/2}\big|\zeta(1+i(t_j+t_l)+\tfrac 1{\log X}) \big|^{a_ia_j/2}\prod_{1\leq j\leq k} \big|\zeta(1+2it_j+\tfrac 1{\log X}) \big|^{a^2_i/4-a_i/2} \\
& \times \prod_{1\leq j<l \leq k} \Big|L \Big(1+i(t_j-t_l)+\tfrac 1{\log X}, \operatorname{sym}^2 f \Big) \Big|^{a_ja_l/2}\Big|L \Big(1+i(t_j+t_l)+\tfrac 1{\log X}, \operatorname{sym}^2 f \Big) \Big|^{a_ja_l/2} \\
& \times \prod_{1\leq j\leq k} \Big|L \Big(1+2it_j+\tfrac 1{\log X}, \operatorname{sym}^2 f \Big) \Big|^{a^2_j/4+a_j/2}.
\end{split}
\end{align*}

  We modify the notations of $\alpha_{i}, \mathcal{J}$ defined in \eqref{alphadef} upon replacing $q$ by $X$ throughout. We further define two functions $h(n, \sigma, x)$, $h_1(n, \sigma, x)$, completely multiplicative in $n$, with their values at a prime $p$ given by
\begin{align*}
  h(p, \sigma, x)= \frac{2h(p)\lambda_f(p)}{ap^{1/\log x}}\frac{\log (x/p)}{\log x} \quad \mbox{and} \quad h_1(p, \sigma, x)= \frac{4h(p^2)}{a^2}.
\end{align*}

  Now, setting $x=X^{\alpha_j}$ in \eqref{mainupperquad} yields
\begin{align*}
\begin{split}
 & \sum^{k}_{m=1}a_m\log |A(d)L(\frac 12+it_m,\chi^{(8d)})| \le a\Re \sum^{j}_{l=1} {\mathcal M}_{l,j}(d)-\frac {a^2}{4}\Re \sum_{p\leq X^{\alpha_j/2}}
 \frac{h_1(p,\sigma, X^{\alpha_j})}{p}+2(A+1)a\alpha^{-1}_j+O(1),
\end{split}
\end{align*}
  where
\[ {\mathcal M}_{l,j}(d) = \sum_{X^{\alpha_{l-1}} < p \leq X^{\alpha_{l}}}\frac {h(p,\sigma, X^{\alpha_j})\chi^{(8d)}(p)}{\sqrt{p}}, \quad 1\leq l \leq j \leq \mathcal{J}. \]

Write $\lceil x \rceil = \min \{ n \in \intz : n \geq x\}$ for any $x \in \rear$ and we define a sequence of even natural
  numbers $\ell_j =2\lceil e^{\boldsymbol{M}}\alpha^{-3/4}_j \rceil$ for $1 \leq j \leq \mathcal{J}$ with a large $\boldsymbol{M}$ depending on ${\bf a}$ only.  We then define the following sets:
\begin{align*}
  \boldsymbol{S}(0) =& \{ (d,2)=1 : |a\Re{\mathcal M}_{1,l}(d)| > \frac {\ell_{1}}{10^3} \; \text{ for some } 1 \leq l \leq \mathcal{J} \} ,   \\
 \boldsymbol{S}(j) =& \{ (d,2)=1  : |a\Re{\mathcal M}_{m,l}(d)| \leq
 \frac {\ell_{m}}{10^3},  \; \mbox{for all} \; 1 \leq m \leq j \; \mbox{and} \; m \leq l \leq \mathcal{J}, \\
 & \;\;\;\;\; \text{but }  |a\Re{\mathcal M}_{j+1,l}(d)| > \frac {\ell_{j+1}}{10^3} \; \text{ for some } j+1 \leq l \leq \mathcal{J} \} ,  \quad  1\leq j \leq \mathcal{J}, \\
 \boldsymbol{S}(\mathcal{J}) =& \{(d,2)=1  : |a\Re{\mathcal M}_{m,
\mathcal{J}}(d)| \leq \frac {\ell_{m}}{10^3} \;  \mbox{for all} \; 1 \leq m \leq \mathcal{J}\}.
\end{align*}

   We proceed by a straightforward modification of the proof of  \cite[Theorem 1.1]{G&Zhao24-06} upon using Lemma \ref{prsum} to see that
\begin{align}
\label{produpperboundoverSjsimplified}
\begin{split}
 \sumstar_{\substack{(d,2)=1 \\ d \leq X}} & \big| L\big(\tfrac 12+it_1, f \otimes \chi^{(8d)} \big) \big|^{a_1} \cdots \big| L\big(\tfrac 12+it_{k},f \otimes \chi^{(8d)}  \big) \big|^{a_{k}}\Phi \Big( \frac d{X} \Big) \\
\ll & X \exp \Big ( \sum_{p \leq X}\frac {(2\Re h(p)\lambda_f(p))^2)}{2p}+\sum_{p\leq X} \frac{\Re h(p^2)(\lambda_f(p^2)-1)}{p} \Big ) \\
= & X \exp \Big ( \sum_{p \leq X}\lambda_f(p^2)\big (\frac {(2\Re h(p))^2}{2p}+\frac{\Re h(p^2)}{p}\big ) \Big )\times  \exp \Big ( \sum_{p \leq X}\big (\frac {(2\Re h(p))^2}{2p}-\frac{\Re h(p^2)}{p} \big) \Big ).
\end{split}
\end{align}
   where the last expression above follows from \eqref{sumlambdapsquare}. 
   
   Direct computation shows that
\begin{align}
\label{hexp1}
\begin{split}
 & \sum_{p \leq X}\lambda_f(p^2)\big (\frac {(2\Re h(p))^2}{2p}+\frac{\Re h(p^2)}{p}\big ) \\
=& \sum_{p \leq X }\frac {\lambda_f(p^2)}{2p}\Big (\frac 12\sum^{k}_{j=1}a^2_j+\sum_{1 \leq i<j \leq k}a_ia_j\big(\cos((t_i+t_j)\log p)+\cos((t_i-t_j)\log p)\big )+\sum^{k}_{j=1}(\frac {a^2_j}{2}+a_j)\cos(2t_j\log p)\Big ), \\
& \sum_{p \leq X }\sum_{p \leq X}\big (\frac {(2\Re h(p))^2}{2p}-\frac{\Re h(p^2)}{p} \big)  \\
=& \sum_{p \leq X }\frac {1}{2p}\Big (\frac 12\sum^{k}_{j=1}a^2_j+\sum_{1 \leq i<j \leq k}a_ia_j\big(\cos((t_i+t_j)\log p)+\cos((t_i-t_j)\log p)\big )+\sum^{k}_{j=1}(\frac {a^2_j}{2}+a_j)\cos(2t_j\log p)\Big ). 
\end{split}
\end{align}   

  The estimation given in \eqref{Lprodboundsquad} now follows from \eqref{mertenstype}--\eqref{mertenstypesympower}, \eqref{sumlambdapsquare}--\eqref{hexp1}. This completes the proof of Theorem \ref{t1quad}.

\section{Proof of Theorem \ref{fixedmodmean}}
\label{sec: mainthm}

\subsection{Initial treatments}
\label{sec: Initial}

We fix a non-negative smooth function $\Phi_U(t)$ supported on $(0,1)$ with $\Phi_U(t)=1$ for $t \in (1/U, 1-1/U)$ and $\Phi^{(j)}_U(t) \ll_j U^j$ for all integers $j \geq 0$.  Here $U$ is a parameter to be optimized later. Note that repeated integration by parts implies that the Mellin transform $\widehat{\Phi}_U$ of $\Phi_U$ satisfies the following bound,  for any integer $E \geq 1$ and $\Re(s) \geq 1/2$,
\begin{align}
\label{whatbound}
 \widehat{\Phi}_U(s)  \ll  U^{E-1}(1+|s|)^{-E}.
\end{align}

Inserting the function $\Phi_U(\frac nY)$ into the definition of $S_m(q,Y;f)$, we see that
\begin{align}
\label{charintinitial}
\begin{split}
S_m(q,Y;f) \ll &
  \sum_{\chi \in X^*_q} \Big | \sum_{n}\chi(n)\lambda_f(n)\Phi_U \Big( \frac {n}{Y} \Big)\Big |^{2m}
  +\sum_{\chi \in X^*_q} \bigg|\sum_{n\leq Y} \chi(n)\lambda_f(n)\Big(1-\Phi_U \Big( \frac {n}{Y} \Big) \Big)\bigg|^{2m} \\
  =: & \  S_m(q,Y;f, \Phi)+\sum_{\chi \in X^*_q} \bigg|\sum_{n\leq Y} \chi(n)\lambda_f(n)\Big(1-\Phi_U \Big( \frac {n}{Y} \Big) \Big)\bigg|^{2m}.
\end{split}
\end{align} 	

Hence, to prove Theorem \ref{fixedmodmean}, it suffices to establish the following Lemmas.
\begin{lemma}
\label{Ssmooth}
With the notation as above and assume the truth of GRH. We have for any real number $m > 2$,
\begin{equation}
\label{Sphi}
S_m(q,Y;f, \Phi) \ll \varphi(q)Y^m(\log q)^{ (m-1)^2}.
\end{equation}
\end{lemma}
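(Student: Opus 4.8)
The plan is to express $S_m(q,Y;f,\Phi)$ as a sum over characters of a smoothed Dirichlet polynomial and convert it, via Mellin inversion, into a contour integral of a product of twisted modular $L$-functions against $\widehat\Phi_U$. Writing $\sum_n \chi(n)\lambda_f(n)\Phi_U(n/Y) = \frac{1}{2\pi i}\int_{(c)} L(s,f\otimes\chi)\,\widehat\Phi_U(s)\,Y^s\,\dif s$ for $c>1$ and shifting the contour to $\Re s = 1/2$ (legitimate since $L(s,f\otimes\chi)$ is entire, with the horizontal pieces controlled by convexity plus the rapid decay \eqref{whatbound}), one obtains $\sum_n \chi(n)\lambda_f(n)\Phi_U(n/Y) \ll Y^{1/2}\int_{\mathbb R}|L(\tfrac12+it,f\otimes\chi)|\,|\widehat\Phi_U(\tfrac12+it)|\,\dif t$. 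Raising to the $2m$-th power and summing over $\chi$, an application of H\"older's inequality in the $t$-integral (separating one factor of $|\widehat\Phi_U|$ to absorb the measure) reduces matters to bounding
\begin{align*}
Y^m \int_{\mathbb R} \sum_{\chi\in X_q^*} |L(\tfrac12+it,f\otimes\chi)|^{2m}\, |\widehat\Phi_U(\tfrac12+it)|\,\dif t
\end{align*}
up to a power of $\log q$ coming from the other H\"older factor; here the point is that $|\widehat\Phi_U(\tfrac12+it)|$ decays like $(1+|t|)^{-E}$ for any $E$, so the $t$-integral effectively localizes.

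The next step is to invoke Corollary \ref{cor1} with $k=1$, $a_1 = 2m$, $t_1 = t$, which gives $\sum_{\chi\in X_q^*}|L(\tfrac12+it,f\otimes\chi)|^{2m} \ll \varphi(q)(\log q)^{m^2}$ for $|t|\le q^A$, uniformly in $t$ (the product over $1\le j<l\le k$ is empty when $k=1$, so no zeta or symmetric-square factors appear). For $|t| > q^A$ one uses the rapid decay of $\widehat\Phi_U$ together with the trivial/convexity bound for $L(\tfrac12+it,f\otimes\chi)$ to make that tail negligible, provided $U$ and $E$ are chosen appropriately (e.g. $U$ a fixed power of $\log q$ and $E$ large). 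Plugging the Corollary's bound into the displayed integral yields $S_m(q,Y;f,\Phi) \ll \varphi(q)Y^m(\log q)^{m^2 + O(1)}$, where the $O(1)$ absorbs the H\"older exponent and the integral of $|\widehat\Phi_U|$.

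The gap between the crude exponent $m^2+O(1)$ and the target $(m-1)^2 = m^2 - 2m + 1$ is the crux, and closing it is the main obstacle. The standard remedy — following Szab\'o \cite{Szab} and \cite{G&Zhao24-06} — is \emph{not} to bound a single moment but to work with the full product structure: one writes $|L(\tfrac12+it,f\otimes\chi)|^{2m}$ after the contour shift as a product over several nearby shifts $t_1,\dots,t_k$ (arising from expanding $|L|^{2m} = L^m \overline{L}^m$ and spreading the arguments), applies Corollary \ref{cor1} with general $k$ and $a_j$'s, and exploits the fact that the extra $\zeta$ and $\operatorname{sym}^2 f$ factors $|\zeta(1+i(t_j-t_l)+\tfrac1{\log q})|^{a_ja_l/2}$ are bounded by $g_1 g_2$, which for $|t_j - t_l|$ small is of size $\log q$ but, crucially, the resulting product of these factors together with $(\log q)^{(a_1^2+\cdots+a_k^2)/4}$ is \emph{smaller} than $(\log q)^{m^2}$ because $(\sum a_j)^2 = \sum a_j^2 + 2\sum_{j<l} a_j a_l$ and the cross terms are partially offset. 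Concretely, one partitions the $t$-integration region according to the spacing of the shifts, bounds each piece using the monotonicity of $g_1, g_2$, and optimizes; the arithmetic factor $L(1+\cdot,\operatorname{sym}^2 f)$ contributes only $O(1)$ near the edge by \eqref{mertenstypesympower}, so effectively only the $\zeta$-factors matter and the computation mirrors the classical $\zeta$ case, delivering the exponent $(m-1)^2$ for $m>2$. I would carry this out by first reducing (as above) to the $L$-function moment, then setting up the multi-shift decomposition exactly as in \cite[\S 4--5]{Szab}, citing Corollary \ref{cor1} in place of the pure-$\zeta$ input and noting that the symmetric-square factors are harmless; the threshold $m>2$ enters precisely when summing the geometric-type series over the dyadic spacing scales converges with the right exponent.
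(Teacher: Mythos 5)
Your proposal follows essentially the same route as the paper: Mellin inversion and a contour shift to $\Re s=\tfrac12$ (with the tail killed by the decay of $\widehat\Phi_U$ and the GRH convexity bound), reducing to the $2m$-th moment of $\int|L(\tfrac12+it,f\otimes\chi)|\,\dif t$, which is then handled by the multi-shift bound of Corollary \ref{cor1} together with a partition of the integration region by the spacings of the shifts, exactly as in Szab\'o's Proposition 3 and Section 3 (the paper's \eqref{finiteintest} and \eqref{Lsmoothest}). You correctly identify that the naive $k=1$ application only yields $(\log q)^{m^2}$ and that the exponent $(m-1)^2$ requires the full multi-variable decomposition, which is precisely the content the paper also delegates to Szab\'o's argument.
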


\begin{lemma}
\label{fdiff}
With the notation as above and assume the truth of GRH. We have for $m \geq 1$,
\begin{equation}
\label{theorem3firstrest}
\sum_{\chi \in X^*_q} \bigg|\sum_{n\leq Y} \chi(n)\lambda_f(n)\Big (1-\Phi_U \Big( \frac {n}{Y} \Big)\Big )\bigg|^{2m} \ll \varphi(q)Y^m.
\end{equation}
\end{lemma}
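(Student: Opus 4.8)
The plan is to bound the inner sum in \eqref{theorem3firstrest} trivially, using that the smooth factor $1-\Phi_U(n/Y)$ is bounded by $1$ and, for $n\le Y$, is supported on a very thin set; the only point requiring thought is the choice of the smoothing parameter $U$, which must be a sufficiently large power of $Y$. Since $\Phi_U$ is supported on $(0,1)$ and equals $1$ on $(1/U,1-1/U)$, the quantity $1-\Phi_U(n/Y)$ vanishes for $n\le Y$ unless $n$ lies in the initial block $\{n\le Y/U\}$ or the end block $\{Y(1-1/U)\le n\le Y\}$, each an interval of length $\asymp Y/U$. On these blocks one uses only $|\lambda_f(n)|\le d(n)$ (a consequence of \eqref{alpha}), together with the Dirichlet estimate $\sum_{n\le N}d(n)=N\log N+O(N)$ and its short-interval form $\sum_{x<n\le x+h}d(n)\ll h\log x+\sqrt{x}$, valid for $1\le h\le x$.

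Concretely, one fixes $U=Y^{1/2}(\log Y)^{2}$ (any $U$ with $Y^{1/2}\log Y\le U\le q$ would do as well). Then $\sum_{n\le Y/U}d(n)\ll (Y/U)\log(Y/U)\ll Y^{1/2}$, and, with $x=Y(1-1/U)$ and $h=Y/U$, also $\sum_{Y(1-1/U)\le n\le Y}d(n)\ll h\log x+\sqrt{x}\ll Y^{1/2}$. Hence
\[
\Bigl|\sum_{n\le Y}\chi(n)\lambda_f(n)\bigl(1-\Phi_U(n/Y)\bigr)\Bigr|\le\sum_{n\le Y/U}d(n)+\sum_{Y(1-1/U)\le n\le Y}d(n)\ll Y^{1/2}
\]
uniformly over the primitive characters $\chi\bmod q$. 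Raising to the power $2m$ and summing over $\chi\in X^{*}_q$ — of which there are at most $\varphi(q)$ — yields $\sum_{\chi\in X^{*}_q}|\cdots|^{2m}\ll\varphi(q)Y^{m}$, as claimed.

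The one genuine constraint is the size of $U$, and this is where I expect the only (mild) difficulty to lie. Each edge block has length $\asymp Y/U$, so the trivial bound on the inner sum is of order $(Y/U)(\log Y)^{O(1)}$, and this is $\ll Y^{1/2}$ only once $U\gg Y^{1/2}$; a choice $U=(\log q)^{O(1)}$ would leave an error of order $\varphi(q)(Y/U)^{m}(\log q)^{O(1)}$, which need not be $\ll\varphi(q)Y^{m}$. At the same time $U$ must remain admissible in Lemma~\ref{Ssmooth}; since that proof only needs $U\le q^{A}$ for a fixed $A$ and here $Y\le q$, the displayed choice is compatible. (GRH is not really used for this lemma, the hypothesis being inherited from the neighbouring statements. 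If one insisted on $U$ of size $(\log q)^{O(1)}$, the initial block could still be treated under GRH by writing it as a Mellin integral, shifting to $\Re s=\tfrac12$, and combining Corollary~\ref{cor1} (with $k=1$, $a_{1}=2m$) with H\"older's inequality; but the end block resists this, since the sharp truncation at $n=Y$ puts back a factor decaying only like $1/|t|$ in the Mellin weight, destroying the gain in $U$. Taking $U$ a power of $Y$ avoids the issue altogether.)
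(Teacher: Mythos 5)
Your argument is correct, but it is genuinely different from the paper's. The paper fixes $U=q^{2\varepsilon}$ once and for all in Section \ref{sec: Initial}, so the edge blocks $\{n\le Y/U\}\cup\{Y(1-1/U)\le n\le Y\}$ are far too long to be handled trivially; instead the paper applies Cauchy--Schwarz, bounds the resulting second moment by character orthogonality \eqref{orthrel} (gaining the factor $U^{-1}=q^{-2\varepsilon}$, which is where $Y\le q$ enters), and bounds the $(4m-2)$-th moment by comparing the sharp cutoff with the smoothed sum via Perron's formula, the shifted-moment estimate \eqref{finiteintest}, and Lemmas \ref{Ssmooth} and \ref{prop: upperbound} --- so GRH is used essentially there. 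Your route instead makes the smoothing coarse, taking $U$ of size $Y^{1/2}(\log Y)^{2}$, so that the two edge blocks have length $O(Y^{1/2}/(\log Y)^{2})$ and the divisor bound $|\lambda_f(n)|\le d(n)$ plus the (short-interval) Dirichlet estimate give $\ll Y^{1/2}$ pointwise in $\chi$; this is elementary, unconditional, and valid for all $m>0$. The one thing you must genuinely deliver --- and you correctly identify it --- is that the enlarged $U$ does not break Lemma \ref{Ssmooth}, since $U$ is shared between the two lemmas. It does not: the only uses of $U$ there are $\widehat{\Phi}_U(s)\ll U(1+|s|)^{-2}$ in \eqref{phiint} and \eqref{Lintslarge}, so one simply raises the truncation height from $q^{4\varepsilon}$ to, say, $q^{C}$ with $C>1/2$ fixed; the remaining integral over $|t|\le q^{C}$ is still controlled by \eqref{finiteintest} because the weight $(1+|t|)^{-1}$ raised to the $2m$-th power absorbs the extra range, the block count changing only by a constant factor. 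So your proof works, trading the paper's analytic machinery for a trivial bound at the cost of a (routine but necessary) re-verification of the companion lemma with the larger smoothing parameter.
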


The remainder of this section is devoted to the proofs of these Lemmas.

\subsection{Proof of Lemma \ref{Ssmooth}}

The Mellin inversion gives us
\begin{align}
\label{charintinitial1}
\begin{split}
 S_m(q,Y;f, \Phi) =&
\sum_{\chi \in X^*_q} \Big | \int\limits_{(2)}L(s, f \otimes \chi)Y^s\widehat{\Phi}_U(s) \dif s\Big |^{2m}.
\end{split}
\end{align}

It follows from \cite[Corollary 5.20]{iwakow} that for every primitive Dirichlet character $\chi$ modulo $q$, $\Re(s) \geq 1/2$ and any $\varepsilon>0$, we have under GRH,
\begin{align}
\label{Lbound}
 L(s, f \otimes \chi) \ll |qs|^{\varepsilon}.
\end{align}

From \eqref{whatbound} and \eqref{Lbound}, we may shift the line of integration in \eqref{charintinitial1} to $\Re(s)=1/2$ so that
\begin{align}
\label{charint}
\begin{split}
  \sum_{\chi \in X^*_q} \Big | \sum_{n}\chi(n)\lambda_f(n)\Phi_U \Big( \frac {n}{Y} \Big)\Big |^{2m} = &
 \sum_{\chi \in X^*_q} \Big | \int\limits_{(1/2)}L(s, f \otimes \chi)Y^s\widehat{\Phi}_U(s) \dif s\Big |^{2m}.
\end{split}
\end{align} 	

Splitting the integral in \eqref{charint} into two ranges, one over $|\Im(s)| \leq q^C$ and the other over $|\Im(s)| > q^C$, for some constant $C>0$ to be specified later, we get that \eqref{charint} is
\begin{align*}
\ll \sum_{\chi \in X^*_q} \Big | \int\limits_{\substack{ (1/2) \\ |\Im(s)| \leq q^C}}L(s, f \otimes \chi)Y^s\widehat{\Phi}_U(s) \dif s\Big |^{2m}+
\sum_{\chi \in X^*_q} \Big | \int\limits_{\substack{ (1/2) \\ |\Im(s)| > q^C}}L(s, f \otimes \chi)Y^s\widehat{\Phi}_U(s) \dif s\Big |^{2m}.
\end{align*} 	

  As $m > 2$, H\"older's inequality renders that
\begin{align}
\label{LintImslarge}
\begin{split}
 \sum_{\chi \in X^*_q}  \Big | \int\limits_{\substack{ (1/2) \\ |\Im(s)| > q^C}}L(s, f \otimes \chi)Y^s\widehat{\Phi}_U(s) \dif s\Big |^{2m}  \ll Y^m\Big (\int\limits_{\substack{ (1/2) \\ |\Im(s)| > q^C}}\Big | \widehat{\Phi}_U(s) \Big| |\dif s| \Big )^{2m-1}
 \int\limits_{\substack{ (1/2) \\ |\Im(s)| > q^C}}\sum_{\chi \in X^*_q} \Big |L(s, f \otimes \chi)\Big|^{2m} \Big| \widehat{\Phi}_U(s)\Big | |\dif s|.
\end{split}
\end{align} 	

Now \eqref{whatbound} yields
\begin{align}
\label{phiint}
\begin{split}
  \int\limits_{\substack {(1/2) \\ |\Im(s)| > q^C}}\Big | \widehat{\Phi}_U(s) \Big| |\dif s|
 \ll & \int\limits_{\substack {(1/2) \\ |\Im(s)| > q^C}}\frac U{1+|s|^2} |\dif s| \ll_C \frac {U}{q^C}.
\end{split}
\end{align}

 We also apply \eqref{Lbound} to obtain that
\begin{align}
\label{Lintslarge}
\begin{split}
  & \int\limits_{\substack {(1/2) \\ |\Im(s)| > q^C}}\sum_{\chi \in X^*_q}
  \Big | L(s, f \otimes \chi)\Big |^{2m} \cdot \Big |\widehat{\Phi}_U(s)\Big | |\dif s| \ll  \int\limits_{\substack {(1/2) \\ |\Im(s)| > q^C}}\sum_{\chi \in X^*_q} |qs|^{\varepsilon}\cdot\frac U{1+|s|^2}  |\dif s| \ll \varphi(q)Uq^{-C(1-\varepsilon)+\varepsilon}.
\end{split}
\end{align}

Taking $U=q^{2\varepsilon}$ and $C=4\varepsilon$ to deduce from \eqref{charintinitial1}, \eqref{charint}--\eqref{Lintslarge} that
\begin{align}
\label{Ssimplified}
\begin{split}
S_m(q,Y;f, \Phi) \ll &
   \sum_{\chi \in X^*_q} \Big | \int\limits_{\substack{ (1/2) \\ |\Im(s)| \leq q^{\varepsilon}}}L(s, f \otimes \chi)Y^s\widehat{\Phi}_U(s) \dif s\Big |^{2m}+
   O(\varphi(q)Y^m) \\
   \ll & Y^m \sum_{\chi \in X^*_q}
   \Big | \int\limits_{\substack{ (1/2) \\ |t| \leq q^{\varepsilon}}}\Big |L( \tfrac{1}{2}+it, f \otimes\chi)\Big |\frac 1{1+|t|} \dif t\Big |^{2m}
   +O(\varphi(q)Y^m),
\end{split}
\end{align}

   Note that we have under GRH, for any real numbers $m > 2$, $10 \leq B=q^{O(1)}$,
\begin{align}
\label{finiteintest}
\begin{split}
  \sum_{\chi \in X^*_q} & \bigg(\int\limits_{0}^{B}|L(\tfrac{1}{2}+it,f \otimes\chi)| \dif t\bigg)^{2m} \\
&  \ll \varphi(q)\big( (\log q)^{(m-1)^2}B^3(\log \log B)^{O(1)}+(\log q)^{m^2-3m+3}B^{2m}(\log \log B)^{O(1)}(\log\log y)^{O(1)}\big).
\end{split}
\end{align}
  The above estimation follows by a straightforward modification of the proof that resembles that of \cite[Proposition 3]{Szab}, upon making use of Corollary \ref{cor1}, together with the observation that the functions $g_2$ defined in \eqref{g2Def} are all bounded by $(\log \log B)^{O(1)}$ in our case. 

   Then, proceeding in a fashion similar to the treatments in \cite[Section 3]{Szab} allows us to establish, under GRH, that for any real number $m > 2$,
\begin{equation}
\label{Lsmoothest}
\sum_{\chi \in X^*_q}
   \Big | \int\limits_{\substack{ (1/2) \\ |t| \leq q^{\varepsilon}}}\Big |L(1/2+it, f \otimes\chi)\Big |\frac 1{1+|t|}dt\Big |^{2m} \ll \varphi(q)Y^m (\log q)^{ (m-1)^2}.
\end{equation}

   We readily deduce \eqref{Sphi} from \eqref{Ssimplified} and \eqref{Lsmoothest}, completing the proof of Lemma \ref{Ssmooth}.

\subsection{Proof of Lemma \ref{fdiff}}
\label{Sec: lem4.2}

  We apply Cauchy's inequality to obtain that
\begin{align}
\label{pocs1}
\begin{split}
  \sum_{\chi \in X^*_q} & \bigg|\sum_{n\leq Y} \chi(n)\lambda_f(n)\big (1-\Phi_U(\frac {n}{Y})\big )\bigg|^{2m} \\
  &  \leq \bigg(\sum_{\chi \in X^*_q} \bigg|\sum_{n\leq Y} \chi(n)\lambda_f(n)\Big (1-\Phi_U \Big( \frac {n}{Y} \Big) \Big )\bigg|^{2}\bigg)^{1/2}
    \bigg(\sum_{\chi \in X^*_q} \bigg|\sum_{n\leq Y} \chi(n)\lambda_f(n)\Big (1-\Phi_U \Big( \frac {n}{Y} \Big)\Big )\bigg|^{4m-2}\bigg)^{1/2}.
\end{split}
\end{align}

Let $X_q$ denote the set of all Dirichlet characters modulo $q$. By \eqref{orthrel} and our assumption that $Y \leq q$,
\begin{align*}
\begin{split}
 \sum_{\chi \in X^*_q} \bigg|\sum_{n\leq Y} \chi(n)\lambda_f(n) \Big (1-\Phi_U \Big( \frac {n}{Y} \Big)\Big ) \bigg|^{2} \leq & \sum_{\chi \in X_q} \bigg|\sum_{n\leq Y} \chi(n)\lambda_f(n) \Big (1-\Phi_U \Big( \frac {n}{Y} \Big)\Big ) \bigg|^{2} \\
 \leq & \varphi(q)\sum_{\substack{n \leq Y } }\Big | \lambda_f(n)\Big (1-\Phi_U \Big( \frac {n}{Y} \Big)\Big)\Big |^2 \ll \varphi(q)\sum_{\substack{n \leq Y } }d^2(n)\Big | 1-\Phi_U \Big( \frac {n}{Y} \Big)\Big |^2 .
\end{split}
\end{align*}

The estimation for $d(n)$ given in \eqref{divisorbound} now leads to
\begin{align}
\label{pocs2}
\begin{split}
 & \sum_{\chi \in X^*_q} \bigg|\sum_{n\leq Y} \chi(n)\lambda_f(n) \Big (1-\Phi_U \Big( \frac {n}{Y} \Big)\Big ) \bigg|^{2}
  \ll
  \varphi(q)Y^{\varepsilon}\sum_{\substack{n \leq Y } }\Big | 1-\Phi_U \Big( \frac {n}{Y} \Big)\Big |^2
  \ll
  \varphi(q)Y^{\varepsilon}\sum_{\substack{Y(1-1/U) \leq n \leq Y \\ 1 \leq n \leq Y/U} }1
  \ll \varphi(q)Y^{1+\varepsilon}U^{-1}.
\end{split}
\end{align}

Next,
\begin{equation}
\label{pocs4}
\sum_{\chi \in X^*_q} \bigg|\sum_{n\leq Y} \chi(n)\lambda_f(n)\Big (1-\Phi_U \Big( \frac {n}{Y} \Big)\Big )\bigg|^{4m-2} \ll \sum_{\chi \in X^*_q} \bigg|\sum_{n\leq Y} \chi(n)\lambda_f(n)\bigg|^{4m-2}+\sum_{\chi \in X^*_q} \bigg|\sum_{n\leq Y} \chi(n)\lambda_f(n)\Phi_U \Big( \frac {n}{Y} \Big)\bigg|^{4m-2}.
\end{equation}

  Similar to the remark below Theorem \ref{quadraticmean}, from Lemma \ref{Ssmooth} and H\"older's inequality,
\begin{equation}
\label{pocs6}
 \sum_{\chi \in X^*_q} \bigg|\sum_{n\leq Y} \chi(n)\lambda_f(n)\Phi_U \Big( \frac {n}{Y} \Big)\bigg|^{4m-2} \ll \varphi(q)Y^{2m-1}(\log q)^{O(1)}.
\end{equation}

Perron's formula as given in \cite[Corollary 5.3]{MVa1} will enable us to estimate the first expression on the right-hand side of \eqref{pocs4}.  Indeed,
\begin{align}
\label{Perron}
\begin{split}
    \sum_{n\leq Y}\chi(n)\lambda_f(n)= & \frac 1{ 2\pi i}\int\limits_{1+1/\log Y-iY}^{1+1/\log Y+iY}L(s,f \otimes \chi) \frac{Y^s}{s} \dif s +R_1+R_2, \\
     = &\frac 1{ 2\pi i}\int\limits_{1/2-iY}^{1/2+iY} +\frac 1{ 2\pi i}\int\limits_{1+1/\log Y -iY}^{1/2-iY}+\frac 1{ 2\pi i}\int\limits_{1/2+iY}^{1+1/\log Y+iY} L(s,f \otimes\chi)\frac{Y^s}{s}\dif s +R_1+R_2,
\end{split}
\end{align}
  where
\begin{align}
\label{R12}
  R_1 \ll  \sum_{\substack{Y/2< n <2Y  \\  n \neq Y  }}|\lambda_f(n)|\min \left( 1, \frac {1}{|n-Y|} \right)  \quad \mbox{and} \quad
  R_2 \ll  \frac {4^{1+1/\log Y}+Y^{1+1/\log Y}}{Y}\sum_{n \geq 1}\frac {|\lambda_f(n)|}{n^{1+1/\log Y}}.
\end{align}
  Using the estimation $|\lambda_f(n)| \leq d(n) \ll n^{\varepsilon}$ for any $\varepsilon>0$ again, we see that
\begin{align}
\label{R1est}
\begin{split}
  R_1 \ll  Y^{\varepsilon}\sum_{\substack{Y/2< n <2Y  \\  n \neq Y  }}\min \left( 1, \frac {1}{|n-Y|} \right)\ll Y^{\varepsilon}\log Y \quad \mbox{and} \quad
  R_2 \ll  \sum_{n \geq 1}\frac {d(n)}{n^{1+1/\log Y}}=\zeta^2(1+1/\log Y).
\end{split}
\end{align}
Since $\zeta(1+1/\log Y) \ll \log Y$ by \cite[Corollary 1.17]{MVa1}, it follows that
\begin{align}
\label{R21}
\begin{split}
  R_2 \ll \log^2 Y.
\end{split}
\end{align}

   We estimate the moment of the vertical integral in \eqref{Perron} using Hölder's inequality, the bound given in \eqref{finiteintest} and the assumption that $Y \leq q$ to see that
\begin{align}
\label{verticalint}
\begin{split}
  \sum_{\chi \in X^*_q} & \bigg|\int\limits_{1/2-iY}^{1/2+iY}L(s,f \otimes\chi) \frac{Y^s}{s} \dif s\bigg|^{4m-2} \ll  Y^{2m-1}\sum_{\chi \in X^*_q}  \bigg( \int\limits_{0}^Y \frac{|L( \tfrac{1}{2}+it,f \otimes\chi) |}{t+1} \dif t \bigg)^{4m-2}  \\
   \ll &  Y^{2m-1}\sum_{n\leq \log Y+2} \frac{n^{4m-2} }{e^{(4m-2)n}} \sum_{\chi \in X^*_q}  \bigg( \int\limits_{e^{n-1}-1}^{e^{n}-1 } |L( \tfrac{1}{2}+it,f \otimes\chi) | \dif t \bigg)^{4m-2} \\
  \ll & Y^{2m-1}\varphi(q)(\log q)^{O(1)}\Big ( \sum_{n\leq \log Y+2}\frac{n^{4m-2}}{e^{(4m-2)n} }e^{3n} +\sum_{n\leq \log Y+2}n^{4m-2} \Big )
  \ll Y^{2m-1}\varphi(q)(\log q)^{O(1)}.
\end{split}
\end{align}

  Lastly, we estimate the moments of the horizontal integrals in \eqref{Perron}. We may assume that $Y\geq 10$, for otherwise the estimation is trivial.
Also, we only need to consider only one of the the integrals by symmetry. Note $|Y^s/s|\ll 1$ in that range and $m \geq 1$, which allows us to apply Hölder's inequality, leading to
\begin{align}
\label{horizontalint}
\begin{split}
 \sum_{\chi \in X^*_q} \bigg| \int\limits_{1/2+iY}^{1+1/\log Y+iY} L(s, f \otimes \chi)\frac{Y^s}{s} \dif s\bigg|^{4m-2}\ll & \sum_{\chi \in X^*_q} \bigg( \int\limits_{1/2+iY}^{1+1/\log Y+iY} |L(s,f \otimes\chi)| |\dif s| \bigg)^{4m-2} \\
 \ll &  \sum_{\chi \in X^*_q} \int\limits_{1/2+iY}^{1+1/\log Y+iY} |L(s,f \otimes\chi)|^{4m-2} |\dif s| \ll \varphi(q)(\log q)^{O(1)},
\end{split}
\end{align}
 where the last estimation above follows from Lemma \ref{prop: upperbound}. \newline

From \eqref{Perron}--\eqref{horizontalint},
\begin{equation}
\label{pocs10}
   \sum_{\chi \in X^*_q} \bigg|\sum_{n\leq Y} \chi(n)\lambda_f(n)\bigg|^{4m-2}\ll Y^{2m-1}\varphi(q)(\log q)^{O(1)}.
\end{equation}

We deduce from \eqref{pocs1}--\eqref{pocs6} and \eqref{pocs10} that the estimation given in \eqref{theorem3firstrest} is valid, recalling that $U=q^{2\varepsilon}$, and $Y \leq q$.  This completes the proof of Lemma \ref{fdiff}.

\section{Proof of Theorem \ref{quadraticmean}}
\label{sec: mainthmquad}

 Let $\Phi_U(t)$ be as defined in Section \ref{sec: Initial}, except that we set $U=X^{2\varepsilon}$ here. Similar to \eqref{charintinitial},
\begin{align*}
\begin{split}
 T_m& (X,Y;f) \ll
  \sumstar_{\substack{d \leq X \\ (d,2)=1}}\Big | \sum_{n}\chi^{(8d)}(n)\lambda_f(n)\Phi_U \Big( \frac {n}{Y} \Big)\Big |^{2m}
  +\sumstar_{\substack{d \leq X \\ (d,2)=1}}\bigg|\sum_{n\leq Y} \chi^{(8d)}(n)\lambda_f(n)\Big(1-\Phi_U \Big( \frac {n}{Y} \Big) \Big)\bigg|^{2m} \\
   =: & \  T_m(X,Y;f, \Phi)+\sum_{\chi \in X^*_q} \bigg|\sum_{n\leq Y} \chi(n)\lambda_f(n)\Big(1-\Phi_U \Big( \frac {n}{Y} \Big) \Big)\bigg|^{2m} \\
  \ll & Y^m \sumstar_{\substack{d \leq X \\ (d,2)=1}}
   \Big | \int\limits_{\substack{ (1/2) \\ |t| \leq X^{\varepsilon}}}\Big |L( \tfrac{1}{2}+it, f \otimes\chi^{(8d)})\Big |\frac 1{1+|t|} \dif t\Big |^{2m}+\sumstar_{\substack{d \leq X \\ (d,2)=1}}\bigg|\sum_{n\leq Y} \chi^{(8d)}(n)\lambda_f(n)\Big (1-\Phi_U\Big( \frac {n}{Y} \Big)\Big )\bigg|^{2m}
   +O(XY^m).
\end{split}
\end{align*} 	

Thus Theorem \ref{fixedmodmean} emerges from the following Lemmas.

\begin{lemma}
\label{Ssmoothquad}
With the notation as above and assume the truth of GRH, we have for any integer $k \geq 1$ and any real number $m$ satisfying $2m \geq 2k+2$, we have for large $Y \leq X$ and any $\varepsilon>0$,
\begin{equation}
\label{Sphiquad}
 T_m(X,Y;f, \Phi) \ll XY^m(\log X)^{2m^2-3m+2}.
\end{equation}
\end{lemma}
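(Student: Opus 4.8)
The plan is to adapt the treatment of the smooth piece in the fixed‑modulus case (Section~\ref{sec: mainthm}), using Corollary~\ref{cor1quad} in place of Corollary~\ref{cor1}. First I would apply Mellin inversion to write, for each positive odd squarefree $d\le X$,
\[
 \sum_{n}\chi^{(8d)}(n)\lambda_f(n)\Phi_U\Big(\frac nY\Big)=\frac{1}{2\pi i}\int\limits_{(2)}L(s,f\otimes\chi^{(8d)})Y^s\widehat{\Phi}_U(s)\,\dif s ,
\]
and then, invoking the GRH‑conditional Lindel\"of bound $L(s,f\otimes\chi^{(8d)})\ll (X(|s|+2))^{\varepsilon}$ (the analogue of \eqref{Lbound}, since the conductor is $8d\le 8X$) together with the rapid decay of $\widehat{\Phi}_U$ in \eqref{whatbound}, shift the contour to $\Re(s)=\tfrac12$. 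Splitting the resulting integral at $|\Im(s)|=X^{\varepsilon}$ and estimating the tail by H\"older's inequality, \eqref{whatbound} and the moment bound of Lemma~\ref{prop: upperboundquad}, exactly as in \eqref{LintImslarge}--\eqref{Lintslarge}, the contribution of $|\Im(s)|>X^{\varepsilon}$ is $O(XY^m)$ once $U=X^{2\varepsilon}$. This reduces \eqref{Sphiquad} to the estimate
\[
 \sumstar_{\substack{(d,2)=1\\ d\le X}}\bigg(\;\int\limits_{\substack{(1/2)\\ |t|\le X^{\varepsilon}}}\big|L(\tfrac12+it,f\otimes\chi^{(8d)})\big|\,\frac{\dif t}{1+|t|}\bigg)^{2m}\ll X(\log X)^{E(m,k,\varepsilon)} ,
\]
whence $T_m(X,Y;f,\Phi)\ll Y^m\cdot X(\log X)^{E(m,k,\varepsilon)}+O(XY^m)\ll XY^m(\log X)^{E(m,k,\varepsilon)}$, and \eqref{Sphiquad} as stated is the case $k=1$.

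To prove the displayed estimate I would follow \cite[\S 3, Proposition~3]{Szab} and \cite[\S 3]{G&Zhao24-06}. Writing $2m=2k+2(m-k)$, the hypothesis $2m\ge 2k+2$ gives $2(m-k)\ge 2\ge 1$, so H\"older's inequality lets me peel off the excess factor on any window $[a,a+B]$ with $a\ge 0$ and $2\le B\le X^{\varepsilon}$:
\[
\Big(\int\limits_a^{a+B}|L|\,\dif t\Big)^{2m}\ll B^{2(m-k)-1}\int\limits_{[a,a+B]^{2k+1}}\big|L(\tfrac12+it_0)\big|^{2(m-k)}\prod_{j=1}^{2k}\big|L(\tfrac12+it_j)\big|\,\dif t_0\cdots\dif t_{2k}.
\]
Summing over $d$ and invoking Corollary~\ref{cor1quad} with $2k+1$ factors and exponents $2(m-k),1,\dots,1$ (the latter repeated $2k$ times) bounds the $d$‑sum by
\[
 X(\log X)^{(m-k)^2+k/2}\!\!\!\prod_{0\le i<j\le 2k}\!\!\!\big(g_1(|t_i-t_j|)\,g_1(|t_i+t_j|)\big)^{a_ia_j/2}\prod_{i=0}^{2k}g_1(|2t_i|)^{a_i^2/4-a_i/2}\times(\text{symmetric-square }g_2\text{-factors}),
\]
and it remains to integrate this against $\prod_j(1+|t_j|)^{-1}$ over the relevant windows. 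A crucial simplification is that every argument occurring here is at most $2X^{\varepsilon}\ll e^{X}$, so by \eqref{g2Def} and \eqref{mertenstypesympower} every symmetric-square factor is $\ll(\log\log X)^{O(1)}$ and contributes only to secondary logarithmic powers; the exponent of $\log X$ is therefore dictated entirely by the $g_1$‑products, which I would estimate via \eqref{gDef} and the Mertens‑type bounds of Lemma~\ref{RS3}.

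Finally I would decompose $\int_{|t|\le X^{\varepsilon}}|L|\,\tfrac{\dif t}{1+|t|}$ dyadically into $O(\log X)$ windows $[2^{v},2^{v+1}]$, apply Minkowski's inequality in $\ell^{2m}$ over $d$, insert the per‑window bound just obtained, and sum over $v$; the weights $2^{-v}$ coming from $(1+|t|)^{-1}$ together with the fact that there are only $O(\log X)$ windows keep the geometric‑type and the bounded‑in‑$v$ contributions under control, which is where the hypothesis $2m\ge 2k+2$ is really used (both to legitimize the H\"older peeling with $2(m-k)\ge 2$ and to render the large‑scale windows negligible). The three terms of \eqref{Edef} then emerge as the extremal scale‑configurations of the variables $t_0,\dots,t_{2k}$, ranging from the tightly clustered regime, where the $g_1$‑arguments are $\le 1/\log X$ and hence $g_1\asymp\log X$ (producing the diagonal‑type exponents $(m-k)^2+2k^2-\cdots$ after multiplying by the $(1+|t|)^{-1}$‑measure of the block and by the prefactor $B^{2(m-k)-1}$), to the more spread‑out regimes, where the $g_1$'s drop to $O(\log\log X)$ but the available measure grows, producing the term $2m^2-3m+k+1$.

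The main obstacle is exactly this last bookkeeping: tracking the full product of $g_1$‑factors of Corollary~\ref{cor1quad} — including the genuinely new $g_1(|t_i+t_j|)$ and $g_1(|2t_i|)^{a_i^2/4-a_i/2}$ terms that have no counterpart in the fixed‑modulus argument of \cite{Szab} — through every dyadic configuration, carrying out the resulting one‑dimensional integrals against $(1+|t|)^{-1}$ over $[0,X^{\varepsilon}]$ as in \cite[\S 3]{G&Zhao24-06}, summing over configurations, and verifying that the extremal one contributes precisely $(\log X)^{E(m,k,\varepsilon)}$; the remaining steps are routine.
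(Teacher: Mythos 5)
Your overall architecture matches the paper's: Mellin inversion, contour shift to $\Re(s)=\tfrac12$ using \eqref{whatbound} and the GRH convexity bound, disposal of $|\Im(s)|>X^{\varepsilon}$ via Lemma~\ref{prop: upperboundquad}, reduction to the $2m$-th moment of $\int_{|t|\le X^{\varepsilon}}|L(\tfrac12+it,f\otimes\chi^{(8d)})|\,\dif t/(1+|t|)$, dyadic windows, and Corollary~\ref{cor1quad} on each window. The gap is in the H\"older peeling. You apply H\"older globally over the whole window $[a,a+B]$, producing the prefactor $B^{2(m-k)-1}\gg 1$. The paper's Proposition~\ref{t3prop} (following Harper and Szab\'o) instead first restricts to the region $\mathcal D$ where the extra variable $u$ is ordered by distance to the anchors $t_1,\dots,t_k$, partitions $\mathcal D$ into annuli $\mathcal B_j$ of length $\asymp e^{j}/\log X$ around the nearest anchor, and applies H\"older separately on each annulus, so that the prefactor becomes $|\mathcal B_j|^{2m-2k-1}\asymp (e^{j}/\log X)^{2m-2k-1}$ --- a genuine saving precisely when the shifts coalesce; this is the content of \eqref{Lintdecomp}--\eqref{LintoverD}.

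That saving is not cosmetic. Consider the configuration, inside the first window, where all shifts lie within $1/\log X$ of $t=0$. Since the family is orthogonal, every factor $g_1(|t_i-t_j|)$, $g_1(|t_i+t_j|)$ and $g_1(|2t_i|)$ in Corollary~\ref{cor1quad} is then $\asymp\log X$ and the $d$-sum is $\asymp X(\log X)^{(a^2-a)/2}=X(\log X)^{2m^2-m}$ with $a=\sum a_i=2m$. The measure of this configuration in your $(2k+1)$-fold integral is $(\log X)^{-(2k+1)}$, so with prefactor $B^{2(m-k)-1}=O(1)$ you obtain $X(\log X)^{2m^2-m-2k-1}$, whereas the localized H\"older supplies the additional factor $(\log X)^{-(2m-2k-1)}$ and yields $X(\log X)^{2m^2-3m+k+1}$, the first term of \eqref{finiteintestquad}. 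The discrepancy in exponents is $2m-3k+O(1)$, which is positive once $m$ exceeds roughly $3k/2$; for $k=1$ and $m\ge 3$ your route gives at best $XY^m(\log X)^{2m^2-m-3}$, short of \eqref{Sphiquad}. (Two lesser remarks: assigning exponent $2$ to each anchor, as the paper does, makes $a_i^2/4-a_i/2=0$ and annihilates the diagonal factors $g_1(|2t_i|)$, though your choice of exponent $1$ merely produces harmless negative powers; and your attribution of the terms of \eqref{Edef} is reversed --- the clustered regime produces $2m^2-3m+k+1$, while the spread-out regime with $u$ far from all anchors produces the two terms involving $(m-k)^2+2k^2$.) To close the gap you must carry out the annular decomposition rather than a single global H\"older step.
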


\begin{lemma}
\label{fdiffquad}
With the notation as above and the truth of GRH, we have for $m \geq 1$,
\begin{equation}
\label{theorem3firstrestquad}
\sumstar_{\substack{d \leq X \\ (d,2)=1}}\bigg|\sum_{n\leq Y} \chi^{(8d)}(n)\lambda_f(n)\Big (1-\Phi_U \Big( \frac {n}{Y} \Big)\Big )\bigg|^{2m} \ll XY^m.
\end{equation}
\end{lemma}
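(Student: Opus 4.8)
We may assume $m\ge 2$: the range $1\le m<2$ reduces to the case $m=2$ by H\"older's inequality in $d$ against the trivial bound $\sumstar_{d\le X,\,(d,2)=1}1\ll X$, exactly as in the remark following Theorem~\ref{quadraticmean}, and $m\ge 2$ is all that is needed of Lemma~\ref{fdiffquad} in the proof of Theorem~\ref{quadraticmean}. For $m\ge 2$ the plan is to follow the proof of Lemma~\ref{fdiff} step by step, replacing the orthogonality relation \eqref{orthrel} by the large sieve inequality for quadratic Dirichlet characters and the input Lemma~\ref{prop: upperbound} by Lemma~\ref{prop: upperboundquad}. Put $b_n=\lambda_f(n)\bigl(1-\Phi_U(n/Y)\bigr)$; then $b_n$ is supported on the $\ll Y/U$ integers $n$ with $n\le Y/U$ or $Y(1-1/U)\le n\le Y$, and $|b_n|\le d(n)\ll n^{\varepsilon}$ by \eqref{divisorbound}. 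As in \eqref{pocs1}, Cauchy's inequality gives
\[
\sumstar_{\substack{d\le X\\(d,2)=1}}\Bigl|\sum_{n\le Y}\chi^{(8d)}(n)b_n\Bigr|^{2m}\le\Bigl(\sumstar_{\substack{d\le X\\(d,2)=1}}\Bigl|\sum_{n\le Y}\chi^{(8d)}(n)b_n\Bigr|^{2}\Bigr)^{1/2}\Bigl(\sumstar_{\substack{d\le X\\(d,2)=1}}\Bigl|\sum_{n\le Y}\chi^{(8d)}(n)b_n\Bigr|^{4m-2}\Bigr)^{1/2},
\]
so it remains to bound the second- and $(4m-2)$-th-moment factors.

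For the second-moment factor I would invoke the large sieve for quadratic characters, which gives
\[
\sumstar_{\substack{d\le X\\(d,2)=1}}\Bigl|\sum_{n\le Y}\chi^{(8d)}(n)b_n\Bigr|^{2}\ll(XY)^{\varepsilon}(X+Y)\sum_n|b_n|^{2}\ll\frac{X^{1+\varepsilon}Y^{1+\varepsilon}}{U},
\]
using $Y\le X$ and $\sum_n|b_n|^{2}\ll Y^{\varepsilon}(Y/U)$. Since $U=X^{2\varepsilon}$, and since the $\varepsilon$'s implicit in the divisor bound and in the large sieve may be taken small relative to the $\varepsilon$ defining $U$, this is $\ll X^{1-\delta}Y$ for some $\delta>0$. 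This is the one genuine point of departure from the proof of Lemma~\ref{fdiff}, where \eqref{orthrel} forces all off-diagonal terms to vanish: one cannot replace the large sieve here by a term-by-term application of Lemma~\ref{prsum} after expanding the square, because its error term $O\bigl(X^{1/2+\varepsilon}(mn)^{1/4+\varepsilon}\bigr)$ for the pair $(m,n)$, summed trivially over the $\asymp(Y/U)^2$ pairs, overwhelms the diagonal contribution once $Y$ is close to $X$. The large sieve — in the spirit of Poisson summation in $d$ with the $n$-sum retained — supplies exactly the bilinear cancellation that is wanted.

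For the $(4m-2)$-th-moment factor, write $\sum_{n\le Y}\chi^{(8d)}(n)b_n=\sum_{n\le Y}\chi^{(8d)}(n)\lambda_f(n)-\sum_{n}\chi^{(8d)}(n)\lambda_f(n)\Phi_U(n/Y)$, so that this factor is $\ll$
\[
\sumstar_{\substack{d\le X\\(d,2)=1}}\Bigl|\sum_{n\le Y}\chi^{(8d)}(n)\lambda_f(n)\Bigr|^{4m-2}+T_{2m-1}(X,Y;f,\Phi).
\]
Here $T_{2m-1}(X,Y;f,\Phi)\ll XY^{2m-1}(\log X)^{O(1)}$ by Lemma~\ref{Ssmoothquad} (applicable since $2m\ge 4$, so its hypothesis holds with $k=1$). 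For the sharp sum I would repeat the argument of \eqref{Perron}--\eqref{horizontalint} with $q$ replaced by $X$ throughout and with Lemma~\ref{prop: upperboundquad} in the role of Lemma~\ref{prop: upperbound}: Perron's formula (\cite[Corollary 5.3]{MVa1}) expresses $\sum_{n\le Y}\chi^{(8d)}(n)\lambda_f(n)$ as a contour integral of $L(s,f\otimes\chi^{(8d)})Y^{s}/s$ plus error terms of size $\ll Y^{\varepsilon}$ uniform in $d$; the line of integration is shifted to $\Re(s)=\tfrac12$, crossing no poles because $L(s,f\otimes\chi^{(8d)})$ is entire; the horizontal segments at height $Y$ are bounded using the GRH convexity estimate \cite[Corollary 5.20]{iwakow} and Lemma~\ref{prop: upperboundquad}; and the vertical segment is bounded by $Y^{1/2}\int_{-Y}^{Y}\frac{|L(\tfrac12+it,f\otimes\chi^{(8d)})|}{1+|t|}\,\dif t$, whose $(4m-2)$-th moment over $d$ is $\ll Y^{2m-1}X(\log X)^{O(1)}$ after Jensen's inequality followed by Lemma~\ref{prop: upperboundquad} — so that, unlike in the proof of Lemma~\ref{fdiff}, the refined estimate \eqref{finiteintest} is not even required here. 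Thus the $(4m-2)$-th-moment factor is $\ll XY^{2m-1}(\log X)^{O(1)}$, and the Cauchy--Schwarz product becomes
\[
\sumstar_{\substack{d\le X\\(d,2)=1}}\Bigl|\sum_{n\le Y}\chi^{(8d)}(n)b_n\Bigr|^{2m}\ll\bigl(X^{1-\delta}Y\bigr)^{1/2}\bigl(XY^{2m-1}(\log X)^{O(1)}\bigr)^{1/2}=X^{1-\delta/2}Y^{m}(\log X)^{O(1)}\ll XY^{m}.
\]

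The decisive step, and the only one not already contained in the proof of Lemma~\ref{fdiff}, is the second-moment estimate: it is where Lemma~\ref{prsum} is not strong enough and the large sieve for quadratic characters must be brought in, and it is also where the power saving $X^{-\delta}$ produced by the choice $U=X^{2\varepsilon}$ is indispensable, since it has to absorb the parasitic $(\log X)^{O(1)}$ inherited from the $(4m-2)$-th moment so that the final bound carries no logarithm. Everything else is a routine transcription of the corresponding steps in the proof of Lemma~\ref{fdiff}.
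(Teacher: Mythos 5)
Your overall strategy coincides with the paper's: Cauchy--Schwarz to split off a second moment and a $(4m-2)$-th moment, the large sieve for quadratic characters for the former, and Perron's formula plus the moment bounds (Lemma \ref{Ssmoothquad}, Lemma \ref{prop: upperboundquad}, and the $k=1$ case of the shifted-moment estimates) for the latter, with the power saving $U^{-1}=X^{-2\varepsilon}$ absorbing the stray logarithms. Your substitution of Jensen's inequality plus Lemma \ref{prop: upperboundquad} for the refined Proposition \ref{t3prop} on the vertical segment is legitimate (both yield $XY^{2m-1}(\log X)^{O(1)}$, which suffices), and your H\"older reduction of $1\le m<2$ to $m\ge 2$ is a sensible supplement.

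However, at the step you yourself single out as decisive, you misquote the large sieve for real characters. The inequality \cite[Corollary 2]{DRHB} used in the paper reads
\begin{equation*}
\sumstar_{\substack{d \leq X \\ (d,2)=1}}\bigg|\sum_{n\leq Z} a_n\chi^{(8d)}(n)\bigg|^{2} \ll (XZ)^{\varepsilon}(X+Z)\sum_{\substack{n_1,n_2 \leq Z \\ n_1n_2=\square}}|a_{n_1}a_{n_2}|,
\end{equation*}
not $(XZ)^{\varepsilon}(X+Z)\sum_n|a_n|^2$. For quadratic characters the pairs with $n_1n_2=\square$ are genuinely diagonal (each $\chi^{(8d)}(n_1n_2)=1$ there) and cannot be removed; your version is false in general, e.g.\ for $a_n$ supported on perfect squares the left side is of order $X\bigl(\sum_n|a_n|\bigr)^2$. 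The conclusion you want is nonetheless recoverable: since $b_n$ is supported on $[1,Y/U]\cup[Y(1-1/U),Y]$, the number of pairs $n_1n_2=\square$ in the support is $\ll Y^{1+\varepsilon}/U$ (this is what the paper carries out via \cite[(6.20)]{G&Zhao24-06}, obtaining $X^{1+\varepsilon}YU^{-2}\log Y$ for the upper range), so the second-moment factor is still $\ll X^{1-\delta}Y$ and the rest of your argument stands. You should restore the $n_1n_2=\square$ condition and insert this counting step; as written, the inequality you invoke does not hold.
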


We shall prove these lemmas in the remainder of the section.

\subsection{Proof of Lemma \ref{Ssmoothquad}}

   We begin with an auxiliary  result.
\begin{proposition}
\label{t3prop}
 With the notation as above and the truth of GRH, we have for any fixed integer $k \geq 1$ and any real numbers $2m \geq 2k+2$, $10 \leq B=X^{O(1)}$,
\begin{align}
\label{finiteintestquad}
\begin{split}
  \sumstar_{\substack{d \leq X \\ (d,2)=1}} & \bigg(\int\limits_{0}^{B}|L(\tfrac{1}{2}+it,f \otimes\chi^{(8d)})| \dif t\bigg)^{2m} \\
 \ll & X\big( (\log X)^{2m^2-3m+k+1}B^k(\log \log B)^{O(1)}+(\log X)^{(m-k)^2+2k^2-k}B^{2m-k}(\log \log B)^{O(1)}(\log\log X)^{O(1)} \\
 & \hspace*{2cm} +(\log X)^{2(m-k)^2+2k^2-m}B^{2m-k-1}(\log \log B)^{O(1)}(\log\log X)^{O(1)}\big).
\end{split}
\end{align}
\end{proposition}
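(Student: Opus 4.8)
The plan is to follow the argument of \cite[Proposition 3]{Szab} (which in turn rests on the method of Soundararajan \cite{Sound2009} and its refinement by Harper \cite{Harper}) essentially line for line, with two substitutions: the shifted‑moment bound for quadratic twists, Corollary \ref{cor1quad}, replaces the fixed‑modulus bound Corollary \ref{cor1}, and the smoothed quadratic character sum of Lemma \ref{prsum} (Poisson summation over fundamental discriminants) replaces the character orthogonality \eqref{orthrel}. First I would cover $[0,B]$ by $\asymp B$ unit intervals $I_j=[j-1,j]$, bound each local integral $\int_{I_j}|L(\tfrac12+it,f\otimes\chi^{(8d)})|\,dt$ by a mean value of $|L|$ over a slightly dilated interval, and then open the $2m$-th power by Hölder's inequality, separating for each $d$ the unit intervals on which the local integral is large (the ``exceptional'' intervals) from those on which it is of typical size; this majorizes $\sumstar_{(d,2)=1,\,d\le X}\big(\int_0^B|L(\tfrac12+it,f\otimes\chi^{(8d)})|\,dt\big)^{2m}$ by a sum of terms indexed by how many exceptional intervals occur and how large the $L$-values on them are.

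Each such term reduces, after moving the sum over $d$ inside and integrating the shifts against the unit intervals, to a finite shifted moment $\sumstar_{(d,2)=1,\,d\le X}\prod_i|L(\tfrac12+it_i,f\otimes\chi^{(8d)})|^{a_i}$, to which Corollary \ref{cor1quad} applies. What remains are multidimensional integrals over a box $[0,B]^{\nu}$ of products of the functions $g_1$ and $g_2$ of \eqref{gDef} and \eqref{g2Def} (with $q$ replaced by $X$) evaluated at the differences $|t_i-t_j|$, the sums $|t_i+t_j|$ and the doubled variables $|2t_i|$, times $X(\log X)^{(\sum_i a_i^2)/4}$ and a power of $B$ coming from the Hölder step. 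These integrals are estimated by decomposing the domain according to which variables lie within $O(1)$ — and within $1/\log X$ — of each other, and how close each variable lies to $0$: using $g_1(x)\asymp\min(\log X,1/x)$ for $x\le 10$, $g_1(x)\asymp\log\log x$ for $10\le x\le e^X$, and the analogous behaviour of $g_2$ (in particular $g_2\asymp 1$ near the origin), a group of variables collapsing into a ball of radius $\asymp 1/\log X$ contributes the maximal power of $\log X$ at the cost of a loss of measure, a group sitting next to $0$ additionally activates the self-dual factors $g_1(|t_i+t_j|)$ and $g_1(|2t_i|)$ (which there are as large as $\log X$), and each genuinely isolated variable contributes a factor $\asymp B$ together with only $(\log\log B)^{O(1)}$ from the slowly varying $g$'s.

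Summing these contributions and optimizing the split of $2m$ over the admissible range $2m\ge 2k+2$, the dominant configurations turn out to be three in number and to produce exactly the three terms of \eqref{finiteintestquad}: one in which the bulk of the mass stays spread over $[0,B]$ away from the origin, giving the term with $B^{k}$ and the largest power $(\log X)^{2m^2-3m+k+1}$; and two in which part of the clustering overlaps the continuous mass, distinguished by whether one further shift is pinned near $t=0$, where the self-dual factors contribute — this is what produces the summand $2k^2$ in the exponents of the last two terms and the discrepancy $-k$ versus $-m$ between them, the extra $(\log\log X)^{O(1)}$ factors generated in this near-origin analysis being absorbed into the arbitrarily small $\varepsilon$. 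The remaining interpolation and optimization, and the passage from the smoothed character sums of Lemma \ref{prsum} to the required main term $X$, are as in \cite[\S 3]{Szab} and \cite{G&Zhao24-06}.

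The main obstacle is precisely the bookkeeping of these multidimensional $g_1,g_2$-integrals: there are many clustering and location patterns, and one must verify that none of them exceeds the claimed maximum of three terms. Relative to \cite[Proposition 3]{Szab}, whose bound has only two terms, the genuinely new feature is the presence of the self-dual factors $g_1(|t_i+t_j|)$ and $g_1(|2t_i|)$ — which appear because $\chi^{(8d)}$ is a real character — forcing one to isolate the portion of each $t$-integration near the origin; these factors are responsible for the third term in \eqref{finiteintestquad} and for the appearance of the parameter $k$ and of $\varepsilon$. The contribution of the Archimedean factors involving $L(\cdot,\operatorname{sym}^2 f)$, by contrast, is harmless, since they have no pole at $s=1$ and are controlled by $g_2$, which is bounded near the origin.
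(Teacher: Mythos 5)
Your plan follows the paper's proof in all essentials: H\"older's inequality with the split $2m=2k+(2m-2k)$ reduces the problem to finite shifted moments handled by Corollary \ref{cor1quad}, and the resulting $g_1,g_2$-integrals are estimated by a multi-scale clustering analysis in which the self-dual factors $g_1(|t_i+t_j|)$ and $g_1(|2t_i|)$ force a separate treatment of the near-origin range of the long $u$-integration, producing the third term. The paper organizes the decomposition slightly differently (a symmetrized integral over the ordered domain $\mathcal{D}$ with blocks $\mathcal{B}_j$ at scales $e^{j}/\log X$ around $t_1$, rather than unit intervals and exceptional sets), but this is cosmetic; your identification of the three dominant configurations and of the role of the real character is correct.
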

\begin{proof}
  By symmetry, we have for each $d$,
\begin{align}
\label{Lintdecomp}
    \bigg(\int\limits_{0}^{B} |L(\tfrac{1}{2}+ it, f \otimes\chi^{(8d)})| \dif t\bigg)^{2m}
      \ll \int\limits_{[0,B]^k}\prod_{a=1}^k|L(\tfrac 12+ it_a, f \otimes\chi^{(8d)})|^2 \bigg(\int_{\mathcal{D} }|L(\tfrac 12+iu, f \otimes \chi^{(8d)})| \dif u \bigg)^{2m-2k} \dif \mathbf{t},
\end{align}
where $\mathcal{D}=\mathcal{D}(t_1,\ldots,t_k)=\{ u\in [0,B]:|t_1-u|\leq |t_2-u|\leq \ldots \leq |t_k-u| \}$. \newline

Now set $\mathcal{B}_1=\big[-\frac{1}{\log X},\frac{1}{\log X}\big]$, $\mathcal{B}_j=\big[-\frac{e^{j-1}}{\log X}, -\frac{e^{j-2}}{\log X}\big]
  \cup \big[\frac{e^{j-2}}{\log X}, \frac{e^{j-1}}{\log X}\big]$ for $2\leq j< \lfloor \log \log X\rfloor+10 =: K$ and $\mathcal{B}_K=[-B,B]\setminus \bigcup_{1\leq j<K} \mathcal{B}_j$. \newline

Note that we have $\mathcal{D}\subset [0,B] \subset t_1+[-B,B]\subset \bigcup_{1\leq j\leq K} t_1+\mathcal{B}_j$ for any $t_1\in [0,B]$. Hence, upon
setting $\mathcal{A}_j=\mathcal{B}_j\cap (-t_1+\mathcal{D})$, the sets $(t_1+\mathcal{A}_j)_{1\leq j\leq K}$ form a partition of $\mathcal{D}$.  We apply Hölder's inequality twice to see that
\begin{align}
\label{LintoverD}
\begin{split}
    \bigg(\int\limits_{\mathcal{D}} & |L(\tfrac{1}{2} + iu, f \otimes\chi^{(8d)})| \dif u\bigg)^{2m-2k}  \leq \bigg( \sum_{1\leq j\leq K} \frac{1}{j}\cdot  j \int\limits_{t_1+\mathcal{A}_j} |L( \tfrac{1}{2}+iu, f \otimes\chi^{(8d)})| \dif u  \bigg)^{2m-2k} \\
     \leq & \bigg(\sum_{1\leq j\leq K} j^{2m-2k} \bigg( \int\limits_{t_1+\mathcal{A}_j} \big|L( \tfrac{1}{2}+iu, f \otimes\chi^{(8d)})\big| \dif u  \bigg)^{2m-2k}\bigg)
     \bigg(\sum_{1\leq j\leq K } j^{-(2m-2k)/(2m-2k-1)} \bigg)^{2m-2k-1} \\
     \ll & \sum_{1\leq j\leq K} j^{2m-k} \bigg( \int\limits_{t_1+\mathcal{A}_j} |L( \tfrac{1}{2}+iu, f \otimes\chi^{(8d)})| \dif u \bigg)^{2m-2k} \\
     \leq & \sum_{1\leq j\leq K} j^{2m-2k} |\mathcal{B}_j|^{2m-2k-1} \int\limits_{t_1+\mathcal{A}_j} |L(\tfrac12+iu, f \otimes\chi^{(8d)})|^{2m-2k} \dif u.
\end{split}
\end{align}
  We write for $\mathbf{t}=(t_1,\ldots,t_k)$,
$$L(\mathbf{t},u)=\sumstar_{\substack{d \leq X \\ (d,2)=1}}\prod_{a=1}^k|L( \tfrac{1}{2}+it_a, f \otimes \chi^{(8d)})|^2 \cdot |L( \tfrac{1}{2}+iu, f \otimes \chi^{(8d)})|^{2m-2k},$$
and deduce from \eqref{Lintdecomp} and \eqref{LintoverD} that
\begin{align}
\label{Lintest}
\begin{split}
    \sumstar_{\substack{d \leq X \\ (d,2)=1}}\bigg(\int\limits_{0}^{B}|L(\tfrac 12+it,f \otimes\chi^{(8d)})|\dif t \bigg)^{2m}\ll &
    \sum_{1\leq l_0\leq K} l_0^{2m-2k} |\mathcal{B}_{l_0}|^{2m-2k-1} \int\limits_{[0,B]^k}\int\limits_{t_1+\mathcal{A}_{l_0}} L(\mathbf{t},u) \dif u \dif \mathbf{t}  \\
     \ll & \sum_{1\leq l_0, l_1, \ldots l_{k-1}\leq K} l_0^{2m-2k} |\mathcal{B}_{l_0}|^{2m-2k-1} \int\limits_{\mathcal{C}_{l_0,l_1, \cdots, l_{k-1}}} L(\mathbf{t},u) \dif u \dif \mathbf{t},
\end{split}
\end{align}
where
$$\mathcal{C}_{l_0,l_1, \cdots, l_{k-1}}=\{(t_1,\ldots,t_k,u)\in [0,B]^{k+1}: u\in t_1+ \mathcal{A}_{l_0},\, |t_{i+1}-u|-|t_i-u|\in \mathcal{B}_{l_i}, \ 1 \leq i \leq k-1\}.$$
We now consider two separate cases in the last summation of \eqref{Lintest} depending on the size of $l_0$. \newline

\textbf{Case 1:} $l_0<K$. Note first that the volume of the region $\mathcal{C}_{l_0,l_1, \cdots, l_{k-1}}$ is $\ll  B^k e^{l_0+l_1+\cdots+l_{k-1}} (\log X)^{-k}$. It follows from the definition of $\mathcal{C}_{l_0,l_1, \cdots, l_{k-1}}$ noting $t_i$, $u \geq 0$, $1\leq i \leq k$ that $e^{l_0}/\log X \ll |t_1-u|\ll |t_1+u|\ll B =X^{O(1)}$. This implies that $g_1(|t_1\pm u|)\ll \log X \cdot \log \log B/e^{l_0}$, where the function $g_1(x)$ is given in \eqref{gDef}.  We also see from the definition of $\mathcal{A}_j$ that $|t_2-u|\geq |t_1-u|$, which implies that $B \gg |t_2+u| \gg |t_2-u|= |t_1-u|+(|t_2-u|-|t_1-u|)\gg  e^{l_0}/\log X + e^{l_1}/\log X$, so that $g_1(|t_2\pm u|)\ll \log X \cdot \log \log B/e^{\max(l_0,l_1) }$.
Similarly, we have  $g_1(|t_i\pm u|)\ll \log X \cdot \log \log B /e^{\max(l_0,l_1,\ldots, l_{i-1}) }$ for any $1 \leq i \leq k$.
Furthermore, we have $\sum^{j-1}_{s=i}(|t_{s+1}-u|-|t_s-u|) \leq |t_j-t_i| \leq |t_j+t_i|$ for any $1 \leq i < j \leq k$, which implies that $g_1(|t_{j}\pm t_i|)\ll \log X \cdot \log \log B/e^{\max(l_i,\ldots, l_{j-1} ) }$.
We now bound $g_1(|2t_i|), 1\leq i \leq k$, $g_1(|2u|)$ trivially by $\log X$ and the functions $g_2$ by $(\log \log B)^{O(1)}$ to derive from Corollary \ref{cor1quad} that, for $(t_1,\ldots,t_k,u)\in \mathcal{C}_{l_0,l_1, \cdots, l_{k-1}}$,
\begin{align*}
     & L(\mathbf{t},u) \\
     \ll & X(\log X)^{((2m-2k)^2+4k)/4+(2m-2k)^2/4-(2m-2k)/2} \\
     &  \hspace*{2cm} \times \bigg(\prod^{k-1}_{i=0}\frac{\log X}{e^{ \max(l_0,l_1,\ldots, l_{i}) }} \bigg)^{2(2m-2k)}
   \bigg(\prod^{k-1}_{i=1} \prod^{k}_{j=i+1}\frac{\log X}{e^{\max(l_i,\ldots, l_{j-1} ) } } \bigg)^4 (\log \log B)^{O(1)} \\
     = & X(\log X)^{m(2m-1)} \exp\Big( -2(2m-2k)\sum^{k-1}_{i=0}\max(l_0,l_1,\ldots, l_{i})-4\sum^{k-1}_{i=1} \sum^{k}_{j=i+1}\max(l_i,\ldots, l_{j-1} )\Big) (\log \log B)^{O(1)} .
\end{align*}

As $|\mathcal{B}_{l_0}|\ll e^{l_0}/\log X$, we deduce that
\begin{align}
\label{firstcase}
\begin{split}
       &  \sum_{\substack{1\leq l_0<K \\ 1\leq l_1, \ldots l_{k-1}\leq K}}  l_0^{2m-2k} |\mathcal{B}_{l_0}|^{2m-2k-1} \int\limits_{\mathcal{C}_{l_0,l_1, \cdots, l_{k-1}}} L(\mathbf{t},u) \dif u \dif \mathbf{t} \\
    \ll & X(\log X)^{2m^2-3m+k+1}B^k(\log \log B)^{O(1)}  \cdot \\
    &  \times \sum_{\substack{1\leq l_0<K \\ 1\leq l_1, \ldots l_{k-1}\leq K}}  l_0^{2m-2k}\exp\Big( (2m-2k-1)l_0+\sum^{k-1}_{i=0}l_i-2(2m-2k)\sum^{k-1}_{i=0}\max(l_0,l_1,\ldots, l_{i})-4\sum^{k-1}_{i=1} \sum^{k}_{j=i+1}\max(l_i,\ldots, l_{j-1} )\Big) \\
    = & X(\log X)^{2m^2-3m+k+1}B^k(\log \log B)^{O(1)}  \cdot \\
    &  \times \sum_{\substack{1\leq l_0<K \\ 1\leq l_1, \ldots l_{k-1}\leq K}}  l_0^{2m-2k}\exp\Big( -(2m-2k)l_0-3\sum^{k-1}_{i=1}l_i-2(2m-2k)\sum^{k-1}_{i=1}\max(l_0,l_1,\ldots, l_{i})-\sum^{k-1}_{i=1} \sum^{k}_{j=i+2}\max(l_i,\ldots, l_{j-1} )\Big) \\
    \ll &   X(\log X)^{2m^2-3m+k+1}B^k(\log \log B)^{O(1)}.
\end{split}
\end{align}

\textbf{Case 2} $l_0=K$.  Note that $g_1(|t_i\pm u|)\ll \log\log B$ for each $1 \leq i \leq k$. Moreover, similar to Case 1, we have $g_1(|t_j\pm t_i|) \ll \log X/e^{\max(l_i,\ldots, l_{j-1} ) }$ for $1 \leq i \leq k$. When $|u| \geq 5$, we have $g_1(|2u|) \ll \log\log B$. We now bound the functions $g_2$ by $(\log \log B)^{O(1)}$ and $g_1(|2t_i|)$ by $\log X$ for $1 \leq i \leq k$ to deduce from Corollary \ref{cor1quad} that
\begin{align*}
     L(\mathbf{t},u) \ll & X(\log X)^{((2m-2k)^2+4k)/4}
     \bigg(\prod^{k-1}_{i=1} \prod^{k}_{j=i+1}\frac{\log X}{e^{\max(l_i,\ldots, l_{j-1} ) } } \bigg)^4 (\log \log B)^{O(1)} \\
     = & X(\log X)^{(m-k)^2+2k^2-k} \exp\Big( -4\sum^{k-1}_{i=1} \sum^{k}_{j=i+1}\max(l_i,\ldots, l_{j-1} )\Big) (\log \log B)^{O(1)}.
\end{align*}

  If $|u| \leq 5$, then as $|t_1-u| \leq |t_i-u| \leq |t_i|+|u|$ for any $1 \leq i \leq k$ that $|t_i| \geq 5$, $g_1(|2t_i|) \ll \log\log B$.  Using the trivial bound of $g_1(|2u|) \ll \log X$ and bounding the functions $g_2$ by $(\log \log B)^{O(1)}$ again, we deduce from Corollary \ref{cor1quad} that
\begin{align*}
 L(\mathbf{t},u) \ll & X(\log X)^{((2m-2k)^2+4k)/4+(2m-2k)^2/4-(2m-2k)/2}
     \bigg(\prod^{k-1}_{i=1} \prod^{k}_{j=i+1}\frac{\log X}{e^{\max(l_i,\ldots, l_{j-1} ) } } \bigg)^4 (\log \log B)^{O(1)} \\
     = & X(\log X)^{2(m-k)^2+2k^2-m} \exp\Big( -4\sum^{k-1}_{i=1} \sum^{k}_{j=i+1}\max(l_i,\ldots, l_{j-1} )\Big) (\log \log B)^{O(1)} .
\end{align*}
  Note that the volume of the region $\mathcal{C}_{K,l_1, \cdots, l_{k-1}}$ is $\ll B^{k+1} e^{l_1+\cdots+l_{k-1}} (\log X)^{-k+1}$.  As $|\mathcal{B}_K|\ll B$, we deduce from the above that
\begin{align}
\label{secondcase}
\begin{split}
     \sum_{\substack{1\leq l_1, \ldots l_{k-1}\leq K}} & K^{2m-2k} |\mathcal{B}_{K}|^{2m-2k-1} \int\limits_{\substack{\mathcal{C}_{K,l_1, \cdots, l_{k-1}} \\ |u| \geq 5}} L(\mathbf{t},u) \dif u \dif \mathbf{t}  \\
     \ll & X(\log X)^{(m-k)^2+2k^2-k}B^{2m-k}(\log \log B)^{O(1)}(\log\log X)^{O(1)} \\
     & \hspace*{2cm} \times \sum_{\substack{1\leq l_1, \ldots l_{k-1}\leq K}} \exp\Big( \sum^{k-1}_{i=1}l_i-4\sum^{k-1}_{i=1} \sum^{k}_{j=i+1}\max(l_i,\ldots, l_{j-1} )\Big) \\
     \ll &   X(\log X)^{(m-k)^2+2k^2-k}B^{2m-k}(\log \log B)^{O(1)}(\log\log X)^{O(1)}.
\end{split}
\end{align}

   Note that when $|u| \leq 5$, the volume of the region $\mathcal{C}_{K,l_1, \cdots, l_{k-1}}$ is $\ll B^{k}$.  Thus,
\begin{align}
\label{secondcaseusmall}
\begin{split}
 \sum_{\substack{1\leq l_1, \ldots l_{k-1}\leq K}}  K^{2m-2k} & |\mathcal{B}_{K}|^{2m-2k-1} \int\limits_{\substack{\mathcal{C}_{K,l_1, \cdots, l_{k-1}} \\ |u| \leq 5}} L(\mathbf{t},u) \dif u \dif \mathbf{t}  \\
     \ll & X(\log X)^{2(m-k)^2+2k^2-m}B^{2m-k-1}(\log \log B)^{O(1)}(\log\log X)^{O(1)} \\
     & \hspace*{2cm} \times \sum_{\substack{1\leq l_1, \ldots l_{k-1}\leq K}} \exp\Big( -4\sum^{k-1}_{i=1} \sum^{k}_{j=i+1}\max(l_i,\ldots, l_{j-1} )\Big) \\
     \ll &   X(\log X)^{2(m-k)^2+2k^2-m}B^{2m-k-1}(\log \log B)^{O(1)}(\log\log X)^{O(1)}.
\end{split}
\end{align}

Now \eqref{finiteintestquad} follows from \eqref{firstcase}--\eqref{secondcaseusmall}, completing the proof of the proposition.
\end{proof}

Following the treatments in the proof of Lemma \ref{Ssmooth} in Section \ref{Sec: lem4.2},
\begin{align}
\label{TXYsmooth}
\begin{split}
 T_m(X,Y;f, \Phi)
  \ll & Y^m \sumstar_{\substack{d \leq X \\ (d,2)=1}}
   \Big | \int\limits_{\substack{ (1/2) \\ |t| \leq X^{\varepsilon}}}\Big |L( \tfrac{1}{2}+it, f \otimes\chi^{(8d)})\Big |\frac 1{1+|t|} \dif t\Big |^{2m}
   +O(XY^m).
\end{split}
\end{align}

From \eqref{Ssimplified} by symmetry and H\"older's
inequality, for $a=1-1/(2m)+\varepsilon$ with $\varepsilon>0$,
\begin{align*}
\begin{split}
 \Big | \int\limits_{ |t| \leq X^{\varepsilon}}  & \Big |L(\tfrac{1}{2}+it, f \otimes\chi^{(8d)})|\frac {\dif t}{1+|t|} \Big |^{2m} \ll \Big |\int\limits_0^{X^{\varepsilon}} \frac{|L(\tfrac{1}{2}+it,f \otimes\chi^{(8d)})|}{t+1} \dif t\Big |^{2m} \\
 & \leq \bigg(\sum_{n\leq \log X+1} n^{-2am/(2m-1)} \bigg)^{2m-1}
    \sum_{n\leq  \log X+1} \bigg(n^a\int\limits_{e^{n-1}-1}^{e^{n}-1 } \frac{|L(\tfrac{1}{2}+it,f \otimes\chi^{(8d)}) |}{t+1} \dif t\bigg)^{2m}   \\
  & \ll \sum_{n\leq  \log X+1} \frac{n^{2m-1+\varepsilon} }{e^{2nm} } \bigg( \int\limits_{e^{n-1}-1}^{e^{n}-1 } |L(\tfrac{1}{2}+it,f \otimes\chi^{(8d)}) | \dif t \bigg)^{2m}.
\end{split}
\end{align*}

We apply Proposition \ref{t3prop} to see that for any integer $k \geq 1$ and any real numbers $2m \geq 2k+2, \varepsilon>0$,
\begin{align*}
  \sum_{n\leq  \log X+1} & \frac{n^{2m-1+\varepsilon} }{e^{2nm} }\sumstar_{\substack{d \leq X \\ (d,2)=1}} \bigg( \int\limits_{e^{n-1}-1}^{e^{n}-1 } |L(\tfrac12+it,f \otimes\chi^{(8d)}) | \dif t \bigg)^{2m}
    \\
     \ll & X\sum_{n\leq  \log X+1} \frac{n^{2m-1+\varepsilon} }{e^{2nm} } \\
    & \times
     \Big((\log X)^{2m^2-3m+k+1}e^{kn}(\log 2n)^{O(1)}+(\log X)^{(m-k)^2+2k^2-k}(\log 2n)^{O(1)}(\log\log X)^{O(1)} e^{(2m-k)n} \\
    & \hspace*{2cm} +(\log X)^{2(m-k)^2+2k^2-m}(\log 2n)^{O(1)}(\log\log X)^{O(1)} e^{(2m-k-1)n}\Big)  \ll X(\log X)^{E(m,k,\varepsilon)},
\end{align*}
  where $E(m,k,\varepsilon)$ is defined in \eqref{Edef}. \newline

We now set $k=1$ to see from the above and \eqref{TXYsmooth} that \eqref{Sphiquad} holds. This completes the proof of Lemma~\ref{Ssmoothquad}.

\subsection{Proof of Lemma \ref{fdiffquad}}  Cauchy's inequality yields
\begin{align}
\label{pocs1quad}
\begin{split}
   \sumstar_{\substack{d \leq X \\ (d,2)=1}} & \bigg|\sum_{n\leq Y} \chi^{(8d)}(n)\lambda_f(n)\Big (1-\Phi_U\Big(\frac {n}{Y}\Big)\Big )\bigg|^{2m} \\
    \leq & \bigg(\sumstar_{\substack{d \leq X \\ (d,2)=1}}\bigg|\sum_{n\leq Y} \chi^{(8d)}(n)\lambda_f(n)\Big (1-\Phi_U \Big( \frac {n}{Y} \Big) \Big )\bigg|^{2}\bigg)^{1/2}
    \bigg(\sumstar_{\substack{d \leq X \\ (d,2)=1}}\bigg|\sum_{n\leq Y} \chi^{(8d)}(n)\lambda_f(n)\Big (1-\Phi_U \Big( \frac {n}{Y} \Big)\Big )\bigg|^{4m-2}\bigg)^{1/2}.
\end{split}
\end{align}
  Note that a large sieve type result for real characters, \cite[Corollary 2]{DRHB}, asserts that for arbitrary complex numbers $a_n$ and any $X$, $Z \geq 2$ and $\varepsilon>0$,
\begin{align*}
\begin{split}
  &  \sumstar_{\substack{d \leq X \\ (d,2)=1}}\bigg|\sum_{n\leq Z} a_n\chi^{(8d)}(n)\bigg|^{2} \ll  (XZ)^{\varepsilon}(X+Z)\sum_{\substack{m,n \leq Z \\ mn=\square}}|a_ma_n|.
\end{split}
\end{align*}

Applying the above with $Z=Y$ while mindful of $Y \leq X$ and $|\lambda_f(n)| \leq d(n) \ll Y^{\varepsilon}$ for $n \leq Y$, we arrive at
\begin{align}
\label{pocs3quad}
\begin{split}
  \sumstar_{\substack{d \leq X \\ (d,2)=1}}\bigg|\sum_{n\leq Y} \chi^{(8d)}(n) \Big (1-\Phi_U \Big( \frac {n}{Y} \Big)\Big ) \bigg|^{2} \ll & (XY)^{\varepsilon}(X+Y)\sum_{\substack{n_1, n_2 \leq Y \\ n_1n_2=\square} }\Big (1-\Phi_U \Big( \frac {n_1}{Y} \Big)\Big )\Big (1-\Phi_U \Big(\frac {n_2}{Y} \Big) \Big ) \\
  \ll &
  X^{1+\varepsilon}\sum_{\substack{Y(1-1/U) \leq n_1, n_2 \leq Y \\ n_1n_2=\square} }1
  \ll X^{1+\varepsilon}\sum_{\substack{d_1 \leq Y}} \frac {Y}{d_1U^2} \ll X^{1+\varepsilon}YU^{-2}\log Y,
\end{split}
\end{align}
where the last estimate above follows from \cite[(6.20)]{G&Zhao24-06}. \newline

Next note that
\begin{equation}
\begin{split}
\label{pocs4quad}
\sumstar_{\substack{d \leq X \\ (d,2)=1}} & \bigg|\sum_{n\leq Y} \chi^{(8d)}(n)\lambda_f(n)\Big (1-\Phi_U \Big( \frac {n}{Y} \Big)\Big )\bigg|^{4m-2} \\
& \ll \sumstar_{\substack{d \leq X \\ (d,2)=1}}\bigg|\sum_{n\leq Y} \chi^{(8d)}(n)\lambda_f(n)\bigg|^{4m-2}+\sumstar_{\substack{d \leq X \\ (d,2)=1}}\bigg|\sum_{n\leq Y} \chi^{(8d)}(n)\lambda_f(n)\Phi_U \Big( \frac {n}{Y} \Big)\bigg|^{4m-2}.
\end{split}
\end{equation}

  Similar to the remark below Theorem \ref{quadraticmean}, we deduce, from Lemma \ref{Ssmoothquad} and H\"older's inequality, that
\begin{equation}
\label{pocs6quad}
 \sumstar_{\substack{d \leq X \\ (d,2)=1}}\bigg|\sum_{n\leq Y} \chi^{(8d)}(n)\lambda_f(n)\Phi_U \Big( \frac {n}{Y} \Big)\bigg|^{4m-2} \ll XY^{2m-1}(\log X)^{O(1)}.
\end{equation}

The first expression on the right-hand side of \eqref{pocs4quad} can be bounded by arguing in a manner similar to the proof of Lemma \ref{fdiff}.  Utilizing Lemma \ref{prop: upperbound},
\begin{align}
\label{Perronquad}
\begin{split}
   \sumstar_{\substack{d \leq X \\ (d,2)=1}}\bigg|\sum_{n\leq Y} \chi^{(8d)}(n)\lambda_f(n)\bigg|^{4m-2}\ll  \sumstar_{\substack{d \leq X \\ (d,2)=1}}\bigg|\int\limits_{1/2-iY}^{1/2+iY}L(s, f \otimes \chi^{(8d)}) \frac{Y^s}{s} \dif s\bigg|^{4m-2}+X(\log X)^{O(1)}.
\end{split}
\end{align}

Hölder's inequality, Proposition \ref{t3prop} with $k=1$ and the condition that $Y \leq X$ lead to
\begin{align}
\label{verticalintquad}
\begin{split}
  \sumstar_{\substack{d \leq X \\ (d,2)=1}}\bigg|\int\limits_{1/2-iY}^{1/2+iY} & L(s, f \otimes \chi^{(8d)}) \frac{Y^s}{s} \dif s\bigg|^{4m-2} \ll  Y^{2m-1}\sumstar_{\substack{d \leq X \\ (d,2)=1}} \bigg( \int\limits_{0}^Y \frac{|L( \tfrac{1}{2}+it, f \otimes \chi^{(8d)}) |}{t+1} \dif t \bigg)^{4m-2}  \\
   \ll &  Y^{2m-1}\sum_{n\leq \log Y+2} \frac{n^{4m-2} }{e^{(4m-2)n}} \sumstar_{\substack{d \leq X \\ (d,2)=1}} \bigg( \int\limits_{e^{n-1}-1}^{e^{n}-1 } |L( \tfrac{1}{2}+it,\chi^{(8d)}) | \dif t \bigg)^{4m-2} \\
  \ll & Y^{2m-1}X(\log X)^{O(1)}\Big ( \sum_{n\leq \log Y+2}\frac{n^{4m-2}}{e^{(4m-2)n} }(e^n+e^{(4m-3)n})  \Big ) \ll Y^{2m-1}X(\log X)^{O(1)}.
\end{split}
\end{align}
Hence, from \eqref{Perronquad} and \eqref{verticalintquad},
\begin{equation}
\label{pocs10quad}
   \sumstar_{\substack{d \leq X \\ (d,2)=1}}\bigg|\sum_{n\leq Y} \chi^{(8d)}(n)\lambda_f(n)\bigg|^{4m-2}\ll Y^{2m-1}X(\log X)^{O(1)}.
\end{equation}
  Now \eqref{theorem3firstrestquad} follows from \eqref{pocs1quad}--\eqref{pocs6quad} and \eqref{pocs10quad} by recalling that $U=X^{2\varepsilon}, Y \leq X$.  This completes the proof of the lemma.

\vspace*{.5cm}

\noindent{\bf Acknowledgments.}  P. G. is supported in part by NSFC grant 11871082 and L. Z. by the FRG Grant PS71536 at the University of New South Wales.

\bibliography{biblio}
\bibliographystyle{amsxport}

\end{document}